\definecolor{r}{rgb}{0.9,0.3,0.1}
\definecolor{b}{rgb}{0.1,0.3,0.9}
\newtheorem{theorem}{Theorem}[section]
\newtheorem{lemma}[theorem]{Lemma}
\newtheorem{corollary}[theorem]{Corollary}
\theoremstyle{remark}
\newtheorem{remark}[theorem]{Remark}
\theoremstyle{definition}
\newtheorem{assumption}[theorem]{Assumption}
\newtheorem{definition}[theorem]{Definition}
\newcommand\cbrk{\text{$]$\kern-.15em$]$}}
\newcommand\opar{\text{\,\raise.2ex\hbox{${\scriptstyle
|}$}\kern-.34em$($}}
\newcommand\cpar{\text{$)$\kern-.34em\raise.2ex\hbox{${\scriptstyle |}$}}\,}
\newcommand{\al}{\alpha}
\newcommand{\ga}{\gamma}
\newcommand\bL{\mathbb{L}}
\newcommand\bR{\mathbb{R}}
\newcommand\bH{\mathbb{H}}
\newcommand\bZ{\mathbb{Z}}
\newcommand\bE{\mathbb{E}}
\newcommand\bM{\mathbb{M}}
\newcommand\bQ{\mathbb{Q}}
\newcommand\cB{\mathcal{B}}
\newcommand\cF{\mathcal{F}}
\newcommand\cO{\mathcal{O}}
\newcommand\cU{\mathcal{U}}
\newcommand\cQ{\mathcal{Q}}
\newcommand\frH{\mathfrak{H}}
\newcommand\aint{-\hspace{-0.38cm}\int}
\newcommand{\mysection}[1]{\section{#1}
\setcounter{equation}{0}}
\begin{document}

\setlength{\baselineskip}{16pt}

\title
{A weighted $L_p$-theory for   parabolic  PDEs with  BMO coefficients on $C^1$-domains}

\author{Kyeong-Hun Kim\footnote{Department of
Mathematics, Korea University, 1 Anam-dong, Sungbuk-gu, Seoul, South
Korea 136-701, \,\, kyeonghun@korea.ac.kr. The research of this
author was supported by Basic Science Research Program through the
National Research Foundation of Korea(NRF) funded by the Ministry of
Education, Science and Technology (2011-0015961)} \qquad
\hbox{\rm and} \qquad Kijung Lee\footnote{Department of Mathematics, Ajou University,
Suwon, South Korea 443-749, \,\, kijung@ajou.ac.kr. The research of this
author was supported by Basic Science Research Program through the
National Research Foundation of Korea(NRF) funded by the Ministry of
Education, Science and Technology (2011-0005597)}}

\date{}


\maketitle

\vspace{0.4cm}

\begin{abstract}
In this paper we   present a weighted $L_p$-theory of
  second-order  parabolic partial differential equations  defined on $C^1$ domains. The leading coefficients are assumed to
be measurable in time variable and   have VMO (vanishing mean oscillation) or small BMO (bounded mean oscillation)  with respect to space variables, and lower order coefficients are allowed to be unbounded and to blow up near the boundary.
   Our   BMO condition is slightly relaxed than the others in the literature.

\vspace*{.125in}

\noindent {\it Keywords: Parabolic equations, Weighted Sobolev spaces,   $L_p$-theory,  BMO coefficients, VMO coefficients.}

\vspace*{.125in}

\noindent {\it AMS 2000 subject classifications:} 35K20, 35R05.
\end{abstract}



\mysection{Introduction}

In this article  we are dealing with  a weighted $L_p$-theory of  the parabolic equation:
\begin{eqnarray}
u_t=a^{ij}(t,x)u_{x^ix^j}+b^i(t,x)u_{x^i}+c(t,x)u+f, \quad (t,x)\in (0,T)\times \cO\label{2012.3.22-1}\\
u(t,x)=0,\quad (t,x)\in (0,T)\times \partial \cO \quad ;\quad u(0,x)=u_0(x), \quad x\in \cO,\nonumber
\end{eqnarray}
where  indices $i$ and $j$  run from $1$ to $d$ with   the summation convention on $i$ and $j$ being enforced, and $\cO$ is either a half space or a bounded $C^1$-domain. It is assumed that  leading coefficients $a_{ij}$ are measurable in $t$ and   have  VMO or small  BMO with respect to $x$, and lower order coefficients $b^i$ and $c$ satisfy
\begin{equation}
               \label{eqn on bc}
\lim_{\rho(x)\to 0} \sup_{t} \left( \rho(x)|b^i(t,x)|+\rho^2(x)|c(t,x)|\right)=0,
\end{equation}
where $\rho(x)=\text{dist}(x, \partial \cO)$. Note that (\ref{eqn on bc}) is satisfied if, for instance, $|b^i(t,x)|\leq N\rho^{-1+\varepsilon}(x)$ and $|c(t,x)|\leq N\rho^{-2+\varepsilon}(x)$ for some constants $\varepsilon,N>0$. Also note that     $b^i$ and $c$ are allowed to be unbounded and  blow up near the boundary.

 We look for solutions in function spaces with weights, in which the  derivatives of  solutions are allowed  to blow up near the boundary. In particular, we prove that if $\alpha\in (-1,p-1)$, $u_0=0$ and $\rho f\in L_p((0,T), L_p(\cO, \rho^{\alpha}(x)dx))$,  then equation (\ref{2012.3.22-1})  has a unique solution $u$ so that $u|_{\partial \cO}=0${\color{r}{,}} and for this solution we have
 \begin{equation}
                   \label{eqn 04.09.1}
 \int^T_0\int_{\cO}\left(|\rho^{-1}u|^p+|u_x|^p+|\rho u_{xx}|^p\right) \rho^{\alpha}(x)dxdt\leq N(p,d,c)  \int^T_0\int_{\cO}|\rho f|^p\rho^{\alpha}(x)dxdt.
 \end{equation}
 The condition $\alpha\in (-1,p-1)$ is sharp even for the heat equation $u_t=\Delta u+f$ (see \cite{KL2}). Also{\color{b}{,}} unless much stronger condition on the constant $\alpha$ is imposed, in general (\ref{eqn 04.09.1}) is false  even for the heat equation if $\cO$ is just a Lipschitz domain (see \cite{Kim11}).

 Our motivation of using  such  weighted Sobolev spaces lies in the $L_p$-theory of stochastic partial differential equations (SPDEs) of the type
  \begin{equation}
                      \label{eqn spde}
     dw= (a^{ij}w_{x^ix^j}+b^iw_{x^i}+cw+\tilde{f})dt+(\sigma^{ik}w_{x^i}+g^k)dB^k_t,
  \end{equation}
  where $B^k_t$ ($k=1,2,\cdots)$ are independent one-dimensional Browninan motions defined on a probability space $(\Omega',\cF,P)$, and  all the coefficients and  inputs $\tilde{f}, g^k$ and the solution $w$ are random functions depending also on $(t,x)$.
  It is known that, unless certain compatibility conditions are assumed, the  second derivatives $w_{x^ix^j}$ may blow up near the boundary.   Hence, we have to measure the second derivatives $w_{x^ix^j}$ using appropriate weights near the boundary.
   It is not hard to see that  our weighted $L_p$-theory of equation (\ref{2012.3.22-1}) with BMO coefficients easily yields the corresponding $L_p$-theory for SPDE (\ref{eqn spde}) with BMO coefficients. Indeed, for simplicity assume $b^i=c=\sigma^{ik}=0$ and consider the stochastic heat equation
 \begin{equation}
                 \label{eqn sto heat}
 dv= (\Delta v+\tilde{f})dt+ g^kdB^k_t.
  \end{equation}
  It is well known (e.g. \cite{Kim03,KL2,Lo}) that
  $$
  \bE  \int^T_0\int_{\cO}\left(|\rho^{-1}v|^p+|v_x|^p+|\rho v_{xx}|^p\right) \rho^{\alpha}(x)dxdt\leq N
   \bE \int^T_0\int_{\cO}\left(|\rho \tilde{f}|^p+|g|^p_{\ell_2}+|\rho g_x|^p_{\ell_2} \right) \rho^{\alpha}(x)dxdt,
   $$
   where $\bE X:=\int_{\Omega'}X dP$.
Obviously for each $\omega\in \Omega'$, $\bar{u}:=w-v$ satisfies the deterministic equation
$$
\bar{u}_t=a^{ij}\bar{u}_{x^ix^j}+ (a^{ij}-\delta^{ij})v_{x^ix^j},
$$
and one gets estimates of $\bar{u}$ from (\ref{eqn 04.09.1}) for each $\omega\in \Omega'$. Since $w=v+\bar{u}$,
the weighted $L_p$ norm of $\rho^{-1}
 w,w_x$ and $\rho w_{xx}$ are obtained for free. Therefore inequality (\ref{eqn 04.09.1}) for the deterministic equation yields an extension of existing $L_p$-theories (e.g. \cite{Kim03,Kr99,KL2,Lo})  of SPDE (\ref{eqn spde})  with continuous leading coefficients.

The Sobolev space theory of second-order parabolic and elliptic equations with discontinuous coefficients has been studied extensively in the last few decades. The famous counterexample of Nadirashvili for the solvability of equations with general discontinuous coefficients made people to look for particular type of discontinuity. Among them, VMO condition (or small BMO condition) is very sharp and important from mathematical point of view.
For  practical motivation, we mention that the uniqueness result for elliptic equations  with discontinuous coefficients has connection to the weak uniqueness of solutions of the corresponding stochastic differential equations.

The study of equations with VMO  coefficients was initiated in \cite{CFL} (elliptic equations) and in \cite{BC} (parabolic equations) and  continued in, for instance, \cite{BC}, \cite{By1}, \cite{By2} and \cite{CFL-1}.  In \cite{Kr07} N.V. Krylov gave a unified approach to investigating the $L_p$ solvability of both divergence and non-divergence form of parabolic and elliptic equations with leading coefficients that are measurable in time variable and have VMO (or small BMO) with respect to spatial variables. Since the publication of \cite{Kr07}, the theory kept evolved{\color{r}{,}} especially in the direction of partially VMO coefficients. We refer the reader   to e.g. \cite{DK1}, \cite{DK2} and \cite{DK3}.
The reader can view  our article as a weighted version of existing $L_p$-theories with small BMO (or VMO) coefficients.

Our   BMO (or VMO) condition is slightly relaxed than the others in the literature
(see Remarks \ref{main remark} and \ref{main remark 2}) because we impose small BMO condition only on the balls away from the boundary, that is
 balls of the type $B_r(x)\subset \cO$ with $r\leq \kappa_0\rho(x) \wedge \delta$, where $\delta,\kappa_0\in (0,1)$ are some constants. Thus no restriction is imposed on the balls intersecting with the boundary.
 This relaxation has become possible due to the method  found in \cite{KL11}. The key is to establish weighted sharp function estimate (see Lemma \ref{lemma 5.4.01} below) and apply the weighted version of Fefferman-Stein and Hardy-Littlewood theorems developed in \cite{KL11}.
  By the way, if $a^{ij}$ are continuous in $x$, then our results were already introduced in \cite{KK2, kr99}. Our  article is a natural extension of \cite{KK2,kr99} to the  equations with discontinuous coefficients.

The article is organized as follows. In section \ref{main result} we introduce our weighted Sobolev spaces and the weighted version of Fefferman-Stein and Hardy-Littlewood theorems. In section \ref{section local estimates} we discuss local estimates which we use later. In Section \ref{section sharp} and \ref{section sharp 2} we present sharp function estimates and a priori estimates. In Section \ref{section half spaces} and \ref{section domains} we prove our main results using all previous preparations.

We finish the introduction with some notations. As usual $\bR^{d}$
stands for the Euclidean space of points $x=(x^{1},...,x^{d})$,
$\bR^d_+:=\{x=(x^1,\cdots,x^d)\in \bR^d: x^1>0\}$ and
$B_r(x):=\{y\in \bR^d: |x-y|<r\}$.
 For $i=1,...,d$, multi-indices $\alpha=(\alpha_{1},...,\alpha_{d})$,
$\alpha_{i}\in\{0,1,2,...\}$, and functions $u(x)$ we set
$$
u_{x^{i}}=\frac{\partial u}{\partial x^{i}}=D_{i}u,\quad
D^{\alpha}u=D_{1}^{\alpha_{1}}\cdot...\cdot D^{\alpha_{d}}_{d}u,
\quad|\alpha|=\alpha_{1}+...+\alpha_{d}.
$$
We also use the notation $D^m$ for a partial derivative of order $m$
with respect to $x$; for instance, we use $Du=u_x$ for a first order derivative of $u$ and $D^2u=u_{xx}$ for a second order derivative of $u$. If we write $N=N(a,b,\cdots)$, this means that the
constant $N$ depends only on $a,b,\cdots$. $A\sim B$ means $A\le N_1 B$ and $B\le N_2 A$ for some constants $N_1,N_2$.

The authors are sincerely grateful to Ildoo Kim for finding few errors in the earlier version of this article.

\mysection{Preliminaries: weighted Sobolev spaces on $\bR^d_+$}

                                                    \label{main result}


For any $p>1$ and $\gamma\in \bR$,  define the
space of Bessel potential
$H^{\gamma}_p=H^{\gamma}_p(\mathbb{R}^d)$ as the space
of all distributions $u$ on $\bR^d$ such that
$$
\|u\|_{H^{\gamma}_p}=\|(1-\Delta)^{\ga/2}u\|_{L_p}:=\|\cF^{-1}[(1+|\xi|^2)^{\gamma/2}\cF(u)(\xi)]\|_{L_p}<\infty,
$$
where $\cF$ is the Fourier transform.
Then $H^{\gamma}_p$ is a Banach space with the given norm and
$C^{\infty}_0(\mathbb{R}^d)$ is dense in $H^{\gamma}_p$ (see \cite{T}). If $\gamma$ is a nonnegative integer, then $H^{\ga}_p$ is
the usual Sobolev space,  that is,
$$
H^{\gamma}_p=\{u: D^{\alpha}u \in L_p, |\alpha|\leq \gamma \}, \quad  \quad \|u\|^p_{H^{\gamma}_p}\sim \sum_{|\alpha|\leq \gamma}\int_{\bR^d}|D^{\alpha}u|^p dx.
$$
 It is well known that, for any multi-index $\alpha$, the operator
 $D^{\alpha}:H^{\gamma}_{p}\to
H^{\gamma-|\alpha|}_p${\color{r}{,}} is bounded. On the other hand, if $\text{supp}\,
u \subset (a,b)\times \bR^{d-1}$, where $-\infty<a<b<\infty$, then (see e.g. Remark 1.13 in \cite{kr99})
\begin{equation}
                                        \label{eqn 5.1.1}
\|u\|_{H^{\gamma}_p}\leq N(d,a,b)\|u_x\|_{H^{\gamma-1}_p}.
\end{equation}
Also recall that  if $|\gamma|\leq n$ for some integer $n$ and $|a|_n:=\sup_{|\alpha|\leq n}\sup_x|D^{\alpha}a|<\infty$ then (see e.g. Lemma 5.2 of \cite{Kr99} for a sharper result)
\begin{equation}
                            \label{eqn 10.10.7}
\|au\|_{H^{\gamma}_p}\leq N(d,\gamma)|a|_n \|u\|_{H^{\gamma}_p}.
\end{equation}

Next we recall  definitions and properties of   the weighted Sobolev spaces $H^{\gamma}_{p,\theta}$ introduced in \cite{kr99}  (also see \cite{kr99-1, Lo, Lo2}). The particular case $H^{\gamma}_{2,d}$, i.e. $\theta=d$ and $p=2$, is introduced  in \cite{LM}.
 For $p>1, \theta\in \bR$ and a nonnegative integer $n$ we define
 $$
 H^{n}_{p,\theta}:=\{u: (x^1)^{|\alpha|}D^{\alpha}u\in L_{p}(\bR^d_+, (x^1)^{\theta-d}dx),\,\, |\alpha|\leq n \},
 $$
that is, $u\in H^n_{p,\theta}$ if and only if
\begin{equation}
                     \label{compare}
\sum_{|\alpha|\le n}
\int_{\bR_+^d}|(x^1)^{|\alpha|} D^{\alpha}u(x)|^p(x^1)^{\theta-d} \,dx <\infty.
\end{equation}
We remark that  the space $H^{n}_{p,\theta}$  is different
from $W^{n,p}(\bR^d_+, x^1,\varepsilon)$ introduced in \cite{Ku},
where
\begin{equation}
                              \label{eqn 10.14.2}
W^{n,p}(\bR^d_+, x^1, \varepsilon):=\{u: D^{\alpha}u \in
L_p(\bR^d_+, (x^1)^{\varepsilon}dx),\;|\alpha|\le n\}.
\end{equation}
For general $\gamma \in \bR$ we define the spaces $H^{\gamma}_{p,\theta}$   as follows. Fix a nonnegative  function $\zeta(x)=\zeta(x^1)\in
C^{\infty}_0(\bR_+)$ such that
\begin{equation}
                                       \label{eqn 5.6.5}
\sum_{n=-\infty}^{\infty}\zeta^p(e^{n}x^1)>c>0, \quad \forall x^1\in \bR_+,
\end{equation}
where $c$ is a constant.  Note that    any nonnegative function $\zeta$
with $\zeta>0$ on $[1,e]$ satisfies (\ref{eqn 5.6.5}). For
$\theta\in \bR$, $p>1$ and $\gamma\in \bR$, let
$H^{\gamma}_{p,\theta}=H^{\gamma}_{p,\theta}(\bR^d_+)$
denote the set of all  distributions $u$  on $\bR^d_+$
such that
\begin{equation}
                  \label{def weight}
\|u\|^p_{H^{\gamma}_{p,\theta}}:= \sum_{n\in\bZ} e^{n\theta}
\|\zeta(\cdot) u(e^{n} \cdot)\|^p_{H^{\gamma}_p}<\infty.
\end{equation}
It is not hard to show that   for different $\eta$ satisfying (\ref{eqn 5.6.5}),
we get the same spaces $H^{\gamma}_{p,\theta}$ with equivalent
norms. Indeed, let $\eta(x)=\eta(x^1)\in C^{\infty}_0(\bR_+)$, then there exists an integer $m$ so
that $\xi(x):=\eta(x)[\sum_{n=-\infty}^{\infty} \zeta(e^nx)]^{-1}=\eta(x) [\sum_{|n|\leq m} \zeta(e^nx)]^{-1}\in C^{\infty}_0(\bR_+)$. Thus by (\ref{eqn 10.10.7}),
$$
\|u(e^n\cdot)\eta(\cdot)\|^p_{H^{\gamma}_p}=\|u(e^n\cdot)\xi \sum_{|k|\leq m}\zeta(e^k \cdot)\|^p_{H^{\gamma}_p}
\leq N\sum_{|k|\leq m}\|u(e^n\cdot)\zeta(e^k\cdot)\|^p_{H^{\gamma}_p}\leq N\sum_{|k|\leq m}\|u(e^{n-k}\cdot)\zeta(\cdot)\|^p_{H^{\gamma}_p},
$$
and therefore  we get
\begin{equation}
                            \label{eqn 5.6.1}
\sum_{n=-\infty}^{\infty}
e^{n\theta}\|\eta(\cdot)u(e^n\cdot)\|^p_{H^{\ga}_p} \leq N
\sum_{n=-\infty}^{\infty}
e^{n\theta}\|\zeta(\cdot)u(e^n\cdot)\|^p_{H^{\ga}_p}.
\end{equation}
By the same reason the  reverse of (\ref{eqn 5.6.1}) holds if $\eta$ satisfies  (\ref{eqn 5.6.5}).

  To compare  (\ref{compare}) and (\ref{def weight}) when $\gamma=n=0$,  denote $L_{p,\theta}:=H^0_{p,\theta}$ and note that
$$
\sum_{n}e^{n\theta}\|\zeta(x^1)u(e^nx)\|^p_{L_p}=\int_{\bR^d_+} |u(x)|^p\sum_n e^{n(\theta-d)}\zeta^p(e^{-n}x^1) dx=:\int_{\bR^d_+}|u(x)|^p \eta_0(x^1) dx,
$$
where $\eta_0(x^1):=\sum_n e^{n(\theta-d)}\zeta^p(e^{-n}x^1)$. Obviously
 the function $\xi_0(t):=\sum_n e^{(n-t)(\theta-d)}\zeta^p(e^{t-n})$ is  bounded $1$-periodic function having positive minimum and $\eta_0(x^1)=\xi_0(\ln x^1)(x^1)^{\theta-d}$. It follows that for some $N=N(\zeta)>0$ we have
$$
N^{-1}\|u\|^p_{L_{p,\theta}}\leq \int_{\bR^d_+}|u|^p (x^1)^{\theta-d}dx \leq N \|u\|^p_{L_{p,\theta}}.
$$
Therefore (\ref{compare}) and (\ref{def weight}) give  equivalent norms  if  $\gamma=n=0$.
Actually, in general
 if $\gamma=n$ is a nonnegative integer, then (see Corollary 2.3 of \cite{kr99} for details)
  \begin{eqnarray}
                                 \label{eqn 10.10.1}
 \|u\|^p_{H^{n}_{p,\theta}}\sim \sum_{|\alpha|\le n}
\int_{\bR_+^d}|(x^1)^{|\alpha|} D^{\alpha}u(x)|^p(x^1)^{\theta-d} \,dx.
\end{eqnarray}


 Let $M^{\alpha}$ be the operator of
multiplying by $(x^1)^{\alpha}$ and  $M:=M^1$.  We write $u\in M^{\alpha}H^{\gamma}_{p,\theta}$ if $M^{-\alpha}u \in H^{\gamma}_{p,\theta}$.  For $\nu\in (0,1]$, denote
$$
|u|_{C}=\sup_{x\in\bR^d_+}|u(x)|, \quad [u]_{C^{\nu}}=\sup_{x\neq
y}\frac{|u(x)-u(y)|}{|x-y|^{\nu}}.
$$
Below are other important properties of the spaces
$H^{\gamma}_{p,\theta}$ taken from \cite{kr99, kr99-1}.

\begin{lemma}
                  \label{lemma 1}
Let $\gamma, \theta\in \bR$ and $p\in (1,\infty)$.
\begin{itemize}
\item[(i)] $C^{\infty}_0(\mathbb{R}^d_+)$ is dense in $H^{\gamma}_{p,\theta}$.

\item[(ii)] Assume that $\gamma=m+\nu+d/p$ for some $m=0,1,\cdots$ and
$\nu\in (0,1]$.  Then for any $u\in H^{\gamma}_{p,\theta}$ and $i\in
\{0,1,\cdots,m\}$, we have
\begin{equation}
                         \label{eqn 3.31.4}
|M^{i+\theta/p}D^iu|_{C}+[M^{m+\nu+\theta/p}D^m u]_{C^{\nu}}\leq N
\|u\|_{ H^{\gamma}_{p,\theta}}.
\end{equation}


\item[(iii)] Let $|\gamma|\leq n$ and $|a|^{(0)}_n:=\sup_{|\alpha|\leq n}\sup_{x}M^{|\alpha|}|D^{\alpha}a|<\infty$, then
\begin{equation}
                          \label{eqn 10.10.9}
\|au\|_{H^{\gamma}_{p,\theta}}\leq N(d,\gamma,\theta)|a|^{(0)}_n \|u\|_{H^{\gamma}_{p,\theta}}.
\end{equation}

\item[(iv)] Let $\alpha\in \bR$. Then
$M^{\alpha}H^{\gamma}_{p,\theta+\alpha p}=H^{\gamma}_{p,\theta}$ and
$$
\|u\|_{H^{\gamma}_{p,\theta}}\leq N
\|M^{-\alpha}u\|_{H^{\gamma}_{p,\theta+\alpha p}}\leq
N\|u\|_{H^{\gamma}_{p,\theta}}.
$$

\item[(v)]  $MD, DM: H^{\gamma}_{p,\theta}\to H^{\gamma-1}_{p,\theta}$ are
bounded linear operators, and
$$
\|u\|_{H^{\gamma}_{p,\theta}}\leq N\|u\|_{H^{\gamma-1}_{p,\theta}}+N
\|Mu_x\|_{H^{\gamma-1}_{p,\theta}}\leq N
\|u\|_{H^{\gamma}_{p,\theta}},
$$
$$
\|u\|_{H^{\gamma}_{p,\theta}}\leq N\|u\|_{H^{\gamma-1}_{p,\theta}}+N
\|(Mu)_x\|_{H^{\gamma-1}_{p,\theta}}\leq N
\|u\|_{H^{\gamma}_{p,\theta}}.
$$

\item[(vi)]
If $\theta\neq d-1, d-1+p$, then
\begin{equation}
                        \label{eqn 4.25.3}
\|u\|_{H^{\gamma}_{p,\theta}}\leq N
\|Mu_x\|_{H^{\gamma-1}_{p,\theta}}, \quad  \|u\|_{H^{\gamma}_{p,\theta}}\leq N
\|(Mu)_x\|_{H^{\gamma-1}_{p,\theta}}.
\end{equation}

\item[(vii)] For $i=0,1$\; let $\kappa\in [0,1],\; p_i\in (1,\infty),\; \gamma_i,\;\theta_i \in \bR$ and assume the relations
$$
\gamma=\kappa \gamma_1 +(1-\kappa)\gamma_0,\quad \frac1p=\frac{\kappa}{p_1}+ \frac{1-\kappa}{p_0},\quad
\frac{\theta}{p}=\frac{\theta_1\kappa}{p_1}+\frac{\theta_0(1-\kappa)}{p_0}.
$$
Then
$$
\|u\|_{H^{\gamma}_{p,\theta}}\leq N\|u\|^{\kappa}_{H^{\gamma_1}_{p_1,\theta_1}}\|u\|^{1-\kappa}_{H^{\gamma_0}_{p_0,\theta_0}}.
$$

\end{itemize}
\end{lemma}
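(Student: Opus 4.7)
The strategy will be to prove every part by reducing it to the analogous property of the classical Bessel potential space $H^\gamma_p(\bR^d)$ via the dyadic definition (\ref{def weight}). The basic device is the rescaled function $v_n(x) := \zeta(x)\, u(e^n x)$: since $\mathrm{supp}\,\zeta$ is a fixed compact subset of $\bR_+$, one has $x^1 \sim 1$ there, so factors like $(x^1)^\alpha$ are harmless, while $(e^n x)^1 \sim e^n$ carries the weight. I will use repeatedly the flexibility in the choice of $\zeta$ established just after (\ref{eqn 5.6.1}), in order to absorb auxiliary factors into a modified cut-off.

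For (i), I would truncate (\ref{def weight}) to $|n|\le N$ and approximate each $v_n$ in $H^\gamma_p$ by a $C^\infty_0(\bR^d)$ function supported in $\{\zeta>0\}$, then scale back to produce a $C^\infty_0(\bR^d_+)$ approximation. For (ii), applying the classical Sobolev embedding $H^\gamma_p \hookrightarrow C^{m,\nu}$ to $v_n$, together with $(e^n x)^1 \sim e^n$ on $\mathrm{supp}\,\zeta$ and a chain-rule computation, will produce the weight $M^{i+\theta/p}$ in (\ref{eqn 3.31.4}); only $O(1)$ indices $n$ contribute at a given point, so no summation issue arises. For (iii), with $a_n(x):=a(e^n x)$ the identity $(x^1)^{|\alpha|}|D^\alpha a_n(x)| = (e^n x^1)^{|\alpha|}|D^\alpha a|(e^n x) \le |a|^{(0)}_n$ combined with $x^1 \sim 1$ on $\mathrm{supp}\,\zeta$ gives $|a_n|_n \le N|a|^{(0)}_n$ uniformly in $n$, so (\ref{eqn 10.10.7}) applied to each summand yields (\ref{eqn 10.10.9}). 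Part (iv) is immediate once one computes $M^{-\alpha} u(e^n x) = e^{-n\alpha}(x^1)^{-\alpha} u(e^n x)$: the factor $e^{-n\alpha p}$ combines with $e^{n(\theta+\alpha p)}$ to produce $e^{n\theta}$, and $\zeta(x)(x^1)^{-\alpha}$ is an admissible new cut-off.

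For the derivative statements (v) and (vi), the scaling identities $M u_{x^j}(e^n \cdot) = x^1 (u(e^n\cdot))_{x^j}$ and $(Mu)_{x^j}(e^n \cdot) = \delta_{1j}\, u(e^n \cdot) + x^1 (u(e^n \cdot))_{x^j}$ will reduce the forward direction to boundedness of $D : H^\gamma_p \to H^{\gamma-1}_p$ together with a commutator estimate involving the smooth compactly supported multiplier $\zeta(x) x^1$; the reverse inequalities should come from (\ref{eqn 5.1.1}) applied to each $v_n$, whose support sits in a fixed strip $(a,b)\times \bR^{d-1}$. I expect the main obstacle to be the sharper bound (\ref{eqn 4.25.3}) in (vi), in which the lower-order term $\|u\|_{H^{\gamma-1}_{p,\theta}}$ is dropped: this will require a genuine Hardy-type inequality whose failure at the exceptional values $\theta = d-1, d-1+p$ must be tracked carefully. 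The natural route is a Mellin transform in the $x^1$ variable after decomposing $u$ into its $H^{\gamma-1}_p$ pieces, and this is where the bulk of the technical work lies.

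Finally, (vii) will follow by applying the classical interpolation inequality for $H^\gamma_p$ to each $\zeta(\cdot) u(e^n \cdot)$ and then invoking H\"older in $n$ with exponents $p_1/(\kappa p)$ and $p_0/((1-\kappa)p)$; the arithmetic on the $e^{n\theta}$ powers is engineered to match the hypothesis $\theta/p = \theta_1 \kappa / p_1 + \theta_0(1-\kappa)/p_0$, which is precisely how that relation arises.
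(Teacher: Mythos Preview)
The paper does not actually prove this lemma: it is stated as a collection of known facts ``taken from \cite{kr99, kr99-1}'' (Krylov's foundational papers on the spaces $H^\gamma_{p,\theta}$), with no argument given. Your outlined strategy---reducing each assertion to the corresponding property of $H^\gamma_p$ via the dyadic decomposition (\ref{def weight}), exploiting that $x^1 \sim 1$ on $\mathrm{supp}\,\zeta$ together with the freedom (\ref{eqn 5.6.1}) to change $\zeta$---is precisely the method used in those references, and the sketch you give for each of (i)--(v) and (vii) is sound. You are also right that (vi) is the only genuinely nontrivial point, requiring a Hardy-type inequality on $\bR_+$ whose failure at $\theta = d-1$ and $\theta = d-1+p$ is the source of that exclusion; Krylov's original argument is organized somewhat differently from a direct Mellin computation but rests on the same one-dimensional weighted Hardy inequality, so your plan would go through.
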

\begin{remark}
                \label{remark last}
 Let  $\theta \in (d-1, d-1+p)$ and $n$ be a nonnegative integer. By Lemma \ref{lemma 1} $(iv), (vi)$
\begin{equation}
\|M^{-n}v\|_{H^{\gamma}_{p,\theta}}\le N \|D^n v\|_{H^{\gamma-n}_{p,\theta}}\label{2011.03.21.1}
\end{equation}
 for any $v\in C^{\infty}_0(\mathbb{R}^d_+)$. Indeed, since $\theta+mp\ne d-1,d-1+p$ for any integer $m$
\begin{eqnarray}
\|M^{-n}v\|_{H^{\gamma}_{p,\theta}}&\le& N \|M^{-1}v\|_{H^{\gamma}_{p,\theta-(n-1)p}}\le N \|v_x\|_{H^{\gamma-1}_{p,\theta-(n-1)p}}\nonumber\\
&\le& N\|M^{-1}v_x\|_{H^{\gamma-1}_{p,\theta-(n-2)p}}\le N \|D^2v\|_{H^{\gamma-2}_{p,\theta-(n-2)p}}\ldots.\nonumber
\end{eqnarray}
\end{remark}
\vspace{5mm}

Next, we introduce  Fefferman-Stein and Hardy-Littlewood theorems in weighted $L_p$-spaces.
Denote
$$
\Omega:=\bR \times \bR^d_+:=\{(t,x)=(t,x^1,x^2,\ldots,x^d)\;:x^1>0\}.
$$
   Fix $\alpha \in (-1,\infty)$ and define the weighted measures
$$
\nu(dx)=\nu_{\alpha}(dx)=(x^1)^{\alpha}dx, \quad d\mu=\mu_{\alpha}(dtdx):=\nu_{\alpha}(dx)dt.
$$
Let $B'_r(x')$ denote the open ball in $\bR^{d-1}$ of radius $r$ with center $x'$. For  $x=(x^1,x')\in \bR^d_+$ and $t\in \bR$, denote
$$
\cB_r(x)=\cB_r(x^1,x')=(x^1-r,x^1+r)\times B'_r(x'), \quad \cQ_r(t,x):=(t,t+r^2)\times \cB_r(x).
$$
By $\mathbb{Q}$ we mean the collection of all such open sets $\cQ_r(t,x)\subset \Omega$.
For $f\in L_{1,loc}(\Omega,\mu)$ we define
\begin{eqnarray}
f_{\cQ}=\aint_{\cQ} f\;d\mu,\quad \mathbb{M}f(t,x)=\sup_{\cQ}\aint_{\cQ}
f d\mu,\quad (f)^{\sharp}(t,x)=\sup_{\cQ}\aint_{\cQ}
|f-f_{\cQ}|d\mu{\color{b}{,}} \nonumber
\end{eqnarray}
where the supremum is taken for all $\cQ\in \mathbb{Q}$ containing
$(t,x)$.

\begin{theorem}(\cite{KL11})
                          \label{FS}
$($Fefferman-Stein$)$ Let $p\in (1,\infty)$. Then for any $f\in
L_p(\Omega,\mu)$,  we have
\begin{eqnarray}
\|f\|_{L_p(\Omega,\mu)}\le
N\|f^{\sharp}\|_{L_p(\Omega,\mu)},
\nonumber
\end{eqnarray}
where $N=N(\alpha,p,d)$.
\end{theorem}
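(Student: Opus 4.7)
The plan is to prove this Fefferman-Stein inequality by showing that $(\Omega, \mu_{\alpha}, \mathbb{Q})$ behaves like a space of homogeneous type and then running the classical good-$\lambda$ argument comparing $\mathbb{M}f$ with $f^{\sharp}$. First, I would verify a doubling property of $\mu = \mu_{\alpha}$ over the admissible cubes: for any $\cQ_r(s,y)\in \mathbb{Q}$ (so $y^1 > r$),
\begin{equation*}
\mu(\cQ_r(s,y)) \;=\; r^2\,|B'_r|\int_{y^1-r}^{y^1+r} z^{\alpha}\,dz,
\end{equation*}
and for any $c \geq 1$ the enlarged set $\cQ_{cr}(s,y)\cap \Omega$ has comparable measure, with a constant depending only on $c,\alpha,d$. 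It is convenient to check this in two regimes: when $y^1 \geq 2r$ one has $\mu(\cQ_r(s,y))\sim r^{d+1}(y^1)^{\alpha}$, while for $r< y^1 \leq 2r$ one has $\mu(\cQ_r(s,y))\sim r^{d+\alpha+2}$; in both cases doubling is preserved under dilation. Combined with a Vitali-type covering adapted to the parabolic cubes, this yields the weak-$(1,1)$ bound for $\mathbb{M}$, and Marcinkiewicz interpolation then gives $\|\mathbb{M}f\|_{L_p(\Omega,\mu)} \leq N \|f\|_{L_p(\Omega,\mu)}$ for $p \in (1,\infty)$.

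Next I would prove the good-$\lambda$ inequality
\begin{equation*}
\mu\bigl(\{\mathbb{M}f>2\lambda,\ f^{\sharp}\leq \beta\lambda\}\bigr) \;\leq\; N\beta\,\mu\bigl(\{\mathbb{M}f>\lambda\}\bigr),\qquad \beta \in (0,1).
\end{equation*}
This is carried out by the standard Calder\'{o}n-Zygmund stopping-time procedure: the open set $E_{\lambda} := \{\mathbb{M}f > \lambda\}$ is decomposed into an almost-disjoint family of maximal cubes $\{\cQ_j\}\subset\mathbb{Q}$ covering $E_{\lambda}$ up to a $\mu$-null set, with a fixed enlargement $\cQ_j^{*}$ satisfying $(|f|)_{\cQ_j^{*}}\lesssim \lambda$. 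On each $\cQ_j$, the weak-$(1,1)$ bound for $\mathbb{M}$ applied to $(f-f_{\cQ_j^{*}})\chi_{\cQ_j^{*}}$, combined with the estimate $\int_{\cQ_j^{*}}|f-f_{\cQ_j^{*}}|\,d\mu \leq f^{\sharp}(\cdot)\,\mu(\cQ_j^{*}) \leq \beta\lambda\,\mu(\cQ_j^{*})$ valid at any point of $\cQ_j$ where $f^{\sharp}\leq \beta\lambda$, gives
\begin{equation*}
\mu\bigl(\{\mathbb{M}f>2\lambda,\ f^{\sharp}\leq \beta\lambda\}\cap \cQ_j\bigr) \;\leq\; N\beta\,\mu(\cQ_j)
\end{equation*}
after the doubling-based comparison $\mu(\cQ_j^{*})\lesssim \mu(\cQ_j)$. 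Summing over $j$ gives the claim.

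Finally, the Fefferman-Stein inequality follows from the usual distribution function argument: writing $\|\mathbb{M}f\|_{L_p(\mu)}^{p} = p\int_0^{\infty}\lambda^{p-1}\mu(\mathbb{M}f>\lambda)\,d\lambda$, splitting at $2\lambda$ and applying the good-$\lambda$ inequality, with $\beta$ chosen small enough that a constant multiple of $\|\mathbb{M}f\|_{L_p(\mu)}^{p}$ may be absorbed on the left (after a routine truncation to ensure finiteness), yields $\|\mathbb{M}f\|_{L_p(\mu)} \leq N \|f^{\sharp}\|_{L_p(\mu)}$, and the conclusion follows since $|f|\leq \mathbb{M}f$ $\mu$-a.e. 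The principal technical obstacle is the verification of doubling near the boundary $\{x^1=0\}$: only cubes fully contained in $\Omega$ are admissible, while $(x^1)^{\alpha}$ degenerates when $\alpha > 0$ and blows up when $\alpha\in (-1,0)$, so the two geometric regimes above must be treated separately; in each regime, however, the estimate is elementary and the doubling constants are uniform.
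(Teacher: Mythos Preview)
The paper does not prove this theorem; it is quoted from \cite{KL11} without argument, so there is no proof here to compare your proposal against. Your outline is the classical route and is correct in spirit: the measure $\mu_{\alpha}$ is doubling on the family $\mathbb{Q}$ (with the two regimes $y^1\geq 2r$ and $r<y^1<2r$ that you isolate), which makes $(\Omega,\mu_{\alpha})$ together with the parabolic geometry behave like a space of homogeneous type, and then the Fefferman--Stein inequality follows from the good-$\lambda$ mechanism exactly as you describe.

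One point deserves slightly more care than you indicate, namely the Whitney/stopping-time step near $\{x^1=0\}$. The Whitney cubes $Q_j$ of $E_\lambda=\{\mathbb{M}f>\lambda\}\subset\Omega$ automatically lie in $\mathbb{Q}$ (any $\cQ_r(t,x)\subset\Omega$ has $r\le x^1$), but a fixed dilate $Q_j^{*}=cQ_j$ may exit $\Omega$. In that case $Q_j^{*}\cap\Omega$ is not itself a member of $\mathbb{Q}$, so to pass from ``there exists $z\in Q_j^{*}$ with $\mathbb{M}f(z)\le\lambda$'' to $(|f|)_{Q_j^{*}\cap\Omega}\lesssim\lambda$ you need an engulfing cube $\tilde Q\in\mathbb{Q}$ with $z\in\tilde Q$, $Q_j^{*}\cap\Omega\subset\tilde Q$ and $\mu(\tilde Q)\sim\mu(Q_j)$. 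Taking $\tilde Q=\cQ_{cr_j}(s_j,cr_j,y_j')$ (so that its $x^1$-interval is $(0,2cr_j)$) does the job when $cr_j>y_j^1$, and the required measure comparison is again your two-regime computation. With this detail in place the rest of your argument goes through verbatim.
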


\begin{theorem}(\cite{KL11})
                       \label{HL}
$($Hardy-Littlewood$)$ Let $p\in (1,\infty)$. Then for $f\in
L_p(\Omega,\mu)$ we have
\begin{eqnarray}
\|\mathbb{M}f\|_{L_p(\Omega,\mu)}\le N
\|f\|_{L_p(\Omega,\mu)},  \nonumber
\end{eqnarray}
where $N=N(\alpha,p,d)$.
\end{theorem}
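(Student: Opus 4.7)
The plan is to establish that $(\Omega, \mu, \mathbb{Q})$ behaves like a space of homogeneous type with respect to the parabolic cylinders $\cQ_r(t,x)$, and then run the classical Calderón--Zygmund argument: doubling $\Rightarrow$ Vitali-type covering $\Rightarrow$ weak-$(1,1)$ $\Rightarrow$ $L_p$ via Marcinkiewicz interpolation against the trivial $L_\infty$ bound.

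The first step, and the only place where the weight enters nontrivially, is the doubling estimate for $\mu$. Writing
\begin{equation*}
\mu(\cQ_r(t,x)) = r^2\,|B'_r(x')|\int_{\max(0,x^1-r)}^{x^1+r} s^{\alpha}\,ds,
\end{equation*}
I would split into the two regimes $x^1\ge 2r$ and $x^1<2r$. In the interior regime the integral is comparable to $2r(x^1)^{\alpha}$, so $\mu(\cQ_r(t,x))\sim r^{d+2}(x^1)^{\alpha}$. In the boundary regime, since $\alpha+1>0$, the integral equals $(x^1+r)^{\alpha+1}/(\alpha+1)$ and is comparable to $r^{\alpha+1}$, giving $\mu(\cQ_r(t,x))\sim r^{d+1+\alpha+1}=r^{d+\alpha+2}$. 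In both regimes, $\mu(\cQ_{2r}(t,x))\le C(d,\alpha)\,\mu(\cQ_r(t,x))$. An essentially identical comparison shows the engulfing property: if two cylinders $\cQ_{r_1}(s_1,y_1),\cQ_{r_2}(s_2,y_2)\in\mathbb{Q}$ intersect with $r_1\le r_2$, then some fixed dilation of $\cQ_{r_2}(s_2,y_2)$ contains $\cQ_{r_1}(s_1,y_1)$, and the $\mu$-measures of the dilates are comparable.

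With doubling and engulfing in hand, the rest is mechanical. The standard Vitali covering argument produces, for each $\lambda>0$, a pairwise disjoint subfamily $\{\cQ_k\}$ of cylinders realizing $\mathbb{M}f(t,x)>\lambda$ such that their fixed dilates cover the level set $E_\lambda:=\{\mathbb{M}f>\lambda\}$. Doubling gives $\mu(E_\lambda)\le C\sum_k\mu(\cQ_k)\le C\lambda^{-1}\sum_k\int_{\cQ_k}|f|\,d\mu\le C\lambda^{-1}\|f\|_{L_1(\Omega,\mu)}$, which is the weak-$(1,1)$ inequality. Combined with the obvious $\|\mathbb{M}f\|_{L_\infty}\le\|f\|_{L_\infty}$, Marcinkiewicz interpolation yields the announced $L_p$ bound with a constant depending only on $\alpha,p,d$.

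I expect the main obstacle to be the doubling estimate near $\{x^1=0\}$: the anisotropic, ``parabolic'' shape of $\cQ_r$ combined with the weight $(x^1)^{\alpha}$ that degenerates (if $\alpha>0$) or blows up (if $-1<\alpha<0$) at the boundary means one cannot invoke an off-the-shelf $A_p$-weight theorem on $\bR^d$ — the reflection $(x^1)^{\alpha}$ extended to $\bR$ fails to be locally integrable when $\alpha\le -1$, which is precisely why the hypothesis $\alpha>-1$ is sharp. The case analysis above is tailored to this: it uses $\alpha+1>0$ exactly once, in integrating $s^\alpha$ down to $0$, and that single use is the mechanism by which the restriction $\alpha\in(-1,\infty)$ is both necessary and sufficient. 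Once doubling is secured, the Vitali lemma and Marcinkiewicz interpolation are formal and introduce no further conditions on $\alpha$.
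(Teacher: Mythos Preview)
The paper does not give a proof of this theorem at all: it is stated as Theorem~\ref{HL} with an attribution to \cite{KL11}, and nothing further. So there is no ``paper's own proof'' to compare against; the authors simply import the result.

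Your outline is the standard route and is essentially correct. One point worth tightening: the cylinders $\cQ_r(t,x)=(t,t+r^2)\times\cB_r(x)$ are one-sided in time, so if $\cQ_{r_1}(t_1,x_1)$ and $\cQ_{r_2}(t_2,x_2)$ intersect with $r_1\le r_2$, the naive dilate $\cQ_{Cr_2}(t_2,x_2)$ need not contain $\cQ_{r_1}(t_1,x_1)$ (since $t_1$ can lie in $(t_2-r_2^2,t_2)$). You therefore need to engulf by a set of the form $(t_2-r_2^2,\,t_2+C^2r_2^2)\times\cB_{Cr_2}(x_2)\cap\Omega$, or equivalently by $\cQ_{Cr_2}(t_1,x_2)\cap\Omega$, and then check that its $\mu$-measure is still comparable to $\mu(\cQ_{r_2}(t_2,x_2))$. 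Your doubling computation (splitting $x^1\ge 2r$ versus $x^1<2r$, using $\alpha+1>0$) handles this without change, since the spatial factor is the same and the time factor only picks up a harmless constant. With that adjustment the Vitali argument and Marcinkiewicz interpolation go through exactly as you describe, and the dependence $N=N(\alpha,p,d)$ is clear.
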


\mysection{Some local estimates  of solutions}
                                       \label{section local estimates}

In this section we develop some local estimates of $D^{\beta}u$ for any multi-index $\beta$,   where $u$ is a solution of the equation:
\begin{equation}
                                 \label{eqn 10.11.1}
   u_t+a^{ij}u_{x^ix^j}=f, \quad (t,x)\in \Omega:= \bR \times \bR^d_+.
  \end{equation}
  In particular, we prove that if $f=0$ in $\cQ_r(r):=(0,r^2)\times (0,2r)\times B'_r(0)$ then for any $s\in (0,r)$ and  $\theta\in (d-1,d-1+p)$,
  \begin{eqnarray*}
\max_{(t,x)\in \cQ_s(s)}(|D^{\beta}u_{xx}|^p+|D^{\beta}u_{t}|^p)\le N(r,s,\beta,\theta)
\int_{\cQ_r(r)}|u|^p (x^1)^{\theta-d+p}dxdt.
\end{eqnarray*}

  The estimates obtained here will be used  to estimate the sharp function of $u_{xx}$ in the next section.

Throughout this section we assume the following.
\begin{assumption}
                     \label{main assumption2}
$a^{ij}=a^{ij}(t)$ are independent of $x$, and there exist  constants
$\delta, K>0$ so that
\begin{equation}
                    \label{assumption 1}
\delta|\xi|^2\leq a^{ij}(t) \xi^i\xi^j \leq K|\xi|^2, \quad \forall \xi \in \bR^d.
\end{equation}
\end{assumption}

For $-\infty \leq S<T\leq \infty$, we define the Banach spaces
$$
\bH^{\gamma}_{p,\theta}(S,T):=L_p((S,T),H^{\gamma}_{p,\theta}), \; \bH^{\gamma}_{p,\theta}(T):=\bH^{\gamma}_{p,\theta}(0,T),\;
\bL_{p,\theta}(S,T):=H^0_{p,\theta}(S,T),\; \bL_{p,\theta}(T):=\bL_{p,\theta}(0,T)
$$
with the norms given by
\[
\|u\|_{\bH^{\ga}_{p,\theta}(S,T)}=\left[\int^{T}_S\|u(t)\|^p_{H^{\gamma}_{p,\theta}}dt\right]^{1/p}.
\]
Finally, we set
$U^{\gamma}_{p,\theta}:=M^{1-2/p}H^{\gamma-2/p}_{p,\theta}$ with the norm
$$\|u\|_{U^{\gamma}_{p,\theta}}:=\|M^{-1+2/p}u\|_{H^{\gamma-2/p}_{p,\theta}}.
$$

First we recall  a Krylov's  result   for  equations with coefficients independent of $x$.

\begin{lemma}
                      \label{lem constant}
 Let $d-1<\theta<d-1+p$, $p\in (1,\infty), \gamma \in \bR$ and $T\in (0, \infty]$. Then for any $f\in M^{-1}\bH^{\gamma}_{p,\theta}(T)$ and
 $u_0 \in U^{\gamma+2}_{p,\theta}$, the
 equation
 \begin{equation}
                 \label{single eqn}
                 u_t=a^{ij}u_{x^ix^j}+f, \quad \quad u(0)=u_0
                 \end{equation}
  has a unique solution (in the sense of distributions, see Remark \ref{remark 10.1} below) $u$ {\color{b}{in}} $M\bH^{\gamma+2}_{p,\theta}(T)$, and for this solution
  \begin{equation}
                                    \label{eqn estimate 1}
     \|M^{-1}u\|_{\bH^{\gamma+2}_{p,\theta}(T)}\leq N\left(\|Mf\|_{\bH^{\gamma}_{p,\theta}(T)}+\|u_0\|_{U^{\gamma+2}_{p,\theta}}\right),
     \end{equation}
     where $N=N(\delta, K, \theta,\gamma,p)$.
     \end{lemma}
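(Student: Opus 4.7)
The plan is to follow Krylov's approach in \cite{kr99}, combining the unweighted $L_p$-theory on $\bR^d$ for parabolic equations with $x$-independent, time-measurable coefficients with the dyadic scaling structure of the weighted Sobolev spaces $H^\gamma_{p,\theta}$. The whole argument is driven by the a priori estimate (\ref{eqn estimate 1}); once that is in hand, uniqueness follows immediately, and existence follows from the method of continuity along the homotopy $a^{ij}_\lambda(t) := \lambda a^{ij}(t) + (1-\lambda)\delta^{ij}$, $\lambda \in [0,1]$, which preserves Assumption \ref{main assumption2}, combined with the explicit solvability for the heat equation ($\lambda = 0$) via the Dirichlet heat kernel on the half-space.

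To reduce to $u_0 = 0$, I would first solve $v_t = a^{ij}v_{x^ix^j}$ with $v(0) = u_0$ using the $x$-independent Dirichlet semigroup on $\bR^d_+$; by the very definition of the trace space $U^{\gamma+2}_{p,\theta} = M^{1-2/p}H^{\gamma+2-2/p}_{p,\theta}$ this yields $v \in M\bH^{\gamma+2}_{p,\theta}(T)$ with the required control by $\|u_0\|_{U^{\gamma+2}_{p,\theta}}$. Subtracting $v$ from $u$ reduces the problem to zero initial data. For the a priori estimate, I would then use the equivalent norm from Lemma \ref{lemma 1}(iv),
\[
\|M^{-1}u\|^p_{\bH^{\gamma+2}_{p,\theta}(T)} \sim \sum_{n \in \bZ} e^{n(\theta-p)} \int_0^T \|\zeta(\cdot)\, u(t, e^n \cdot)\|^p_{H^{\gamma+2}_p}\,dt,
\]
and work one dyadic scale at a time. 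Setting $u_n(s,y) := \zeta(y^1)\, u(e^{2n} s, e^n y)$ (so that parabolic scaling is built in), a direct computation shows that $u_n$ solves a parabolic equation on $\bR \times \bR^d$ with the same coefficients $a^{ij}(e^{2n} s)$ (still satisfying Assumption \ref{main assumption2}), a right-hand side $e^{2n}(\zeta f)(e^{2n} s, e^n \cdot)$, and commutator terms involving $\zeta_{y^1}$ and $\zeta_{y^1 y^1}$ paired with $u$ and $u_{x^i}$. The classical $\bR^d$-estimate of Krylov applied to each $u_n$, followed by raising to the $p$-th power and summing with the weights $e^{n(\theta - p)}$, recovers (\ref{eqn estimate 1}) modulo the commutator contributions.

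The main obstacle is absorbing those commutators. After summation over $n$, their total weighted $L_p$-norm is bounded by a multiple of $\|u\|_{\bH^{\gamma+1}_{p,\theta-p}(T)} + \|u_x\|_{\bH^{\gamma+1}_{p,\theta}(T)}$; both terms are of lower differential order than the left-hand side but of comparable weight through the Hardy-type inequality (\ref{eqn 4.25.3}). The restriction $\theta \in (d-1, d-1+p)$ (which keeps the shifted exponents used when iterating Lemma \ref{lemma 1}(vi) away from the forbidden values $d-1, d-1+p$, as in Remark \ref{remark last}) is exactly what makes (\ref{eqn 4.25.3}) applicable and allows one to trade $M^{-1}$-weights for derivatives. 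Interpolating via Lemma \ref{lemma 1}(vii), the top-order commutator contribution is then bounded by $\varepsilon \|M^{-1}u\|_{\bH^{\gamma+2}_{p,\theta}(T)} + C_\varepsilon \|u\|_{\bL_{p,\theta}(T)}$ and absorbed into the left-hand side, while the remaining low-order term is controlled by Gronwall's inequality in $t$. This closes the a priori estimate (\ref{eqn estimate 1}) and completes the proof.
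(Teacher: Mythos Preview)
The paper's own proof is simply a citation to Theorem~5.6 of \cite{kr99}; your outline is essentially a sketch of Krylov's argument there, and the overall architecture (method of continuity, reduction to $u_0=0$, dyadic localization via $\zeta(e^n\cdot)$, applying the unweighted $\bR^d$ estimate at each scale, then summing) is correct.

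There is, however, a genuine gap in your last step. You propose to handle the residual low-order commutator contribution by Gronwall in $t$. This cannot close the argument as stated: the lemma allows $T\in(0,\infty]$ and asserts a constant $N=N(\delta,K,\theta,\gamma,p)$ \emph{independent of $T$}. A Gronwall step produces a constant growing in $T$ and says nothing at all when $T=\infty$. In Krylov's proof the absorption is done without any time integration: one exploits the freedom in the choice of $\zeta$ (equivalently, an extra scaling parameter in the dyadic partition, as in the construction behind Lemma~\ref{lemma 10.09.1} of the present paper) so that the commutator terms $\zeta_x,\zeta_{xx}$ come with an arbitrarily small prefactor. Combined with the Hardy-type inequality (\ref{eqn 4.25.3})---which is exactly where the restriction $\theta\in(d-1,d-1+p)$ enters---this lets one bound the full commutator by $\varepsilon\|M^{-1}u\|_{\bH^{\gamma+2}_{p,\theta}(T)}$ directly, with $\varepsilon$ at one's disposal and no residual $\|u\|_{\bL_{p,\theta}(T)}$ term left over. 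That is the missing idea; once you replace the Gronwall step by this small-parameter partition-of-unity device, your sketch matches \cite{kr99}.
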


\begin{proof}
See Theorem 5.6 of \cite{kr99}.
\end{proof}

For any distribution $h$ on $\bR^d_+$ and $\phi\in C^{\infty}_0(\bR^d_+)$, by $(h,\phi)$ we denote the image of $\phi$ under $h$.
\begin{remark}
                \label{remark 10.1}
   We say that $u$ is a solution of (\ref{single eqn}) in the sense of distributions if for any $\phi \in C^{\infty}_0(\bR^d_+)$
   $$
   (u(t),\phi)=(u_0,\phi)+\int^t_0 (a^{ij}u_{x^ix^j}+f,\phi)ds, \quad \quad \forall \, t\leq T{\color{b}{.}}{\color{r}{,}}
   $$
   \end{remark}

   \begin{corollary}
                            \label{cor 5.1.1}
   Let $-\infty \leq S<T<\infty$. For any $f\in M^{-1}\bH^{\gamma}_{p,\theta}(S,T)$ and $u_0 \in U^{\gamma+2}_{p,\theta}$, the
 equation
 \begin{equation}
                 \label{single eqn 2}
                 u_t+a^{ij}u_{x^ix^j}=f, \quad t\in (S,T)
                 \end{equation}
 with $u(T)=u_0$ has a unique solution $u$ in $M\bH^{\gamma+2}_{p,\theta}(S,T)$, and for this solution
  \begin{equation}
                                    \label{eqn estimate 2}
     \|M^{-1}u\|_{\bH^{\gamma+2}_{p,\theta}(S,T)}\leq N\left(\|Mf\|_{\bH^{\gamma}_{p,\theta}(S,T)}+\|u_0\|_{U^{\gamma+2}_{p,\theta}}\right),
     \end{equation}
     where $N=N(\delta, K, \theta,\gamma,p)$.
     \end{corollary}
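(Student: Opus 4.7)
The plan is to reduce the backward Cauchy problem \eqref{single eqn 2} to the forward problem \eqref{single eqn} already settled by Lemma \ref{lem constant} via a time-reversal substitution. Concretely, I would set
\[
v(s,x):=u(T-s,x),\qquad \tilde a^{ij}(s):=a^{ij}(T-s),\qquad \tilde f(s,x):=-f(T-s,x),
\]
for $s\in(0,T-S)$, noting $T-S\in(0,\infty]$ is the regime covered by Lemma \ref{lem constant}. The coefficients $\tilde a^{ij}$ trivially satisfy Assumption \ref{main assumption2} with the same constants $\delta,K$, and a direct change of variables $s=T-t$ gives
\[
\|M\tilde f\|_{\bH^{\gamma}_{p,\theta}(0,T-S)}=\|Mf\|_{\bH^{\gamma}_{p,\theta}(S,T)},\quad \|M^{-1}v\|_{\bH^{\gamma+2}_{p,\theta}(0,T-S)}=\|M^{-1}u\|_{\bH^{\gamma+2}_{p,\theta}(S,T)}.
\]

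The next step is to check the equivalence of the two Cauchy problems in the sense of distributions. Assuming $u$ is a solution of \eqref{single eqn 2} with $u(T)=u_0$, I would, for fixed $\phi\in C^{\infty}_0(\bR^d_+)$, rewrite the distributional identity for $u$ on the interval $[T-s,T]$ as
\[
(v(s),\phi)=(u(T-s),\phi)=(u_0,\phi)+\int_0^{s}(\tilde a^{ij}v_{x^ix^j}+\tilde f,\phi)\,dr,
\]
which is precisely the distributional formulation of
\[
v_r=\tilde a^{ij}v_{x^ix^j}+\tilde f,\qquad v(0)=u_0
\]
in the sense of Remark \ref{remark 10.1}. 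Conversely, if $v$ solves this forward problem then $u(t,x):=v(T-t,x)$ solves \eqref{single eqn 2}. Existence and uniqueness for $u$ in $M\bH^{\gamma+2}_{p,\theta}(S,T)$ therefore follow from the existence and uniqueness for $v$ in $M\bH^{\gamma+2}_{p,\theta}(0,T-S)$ supplied by Lemma \ref{lem constant}, and applying \eqref{eqn estimate 1} to $v$ and translating back yields \eqref{eqn estimate 2} with the same constant $N=N(\delta,K,\theta,\gamma,p)$.

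I do not anticipate a substantive obstacle here; the only mild care required is in the bookkeeping for the distributional formulation when $S=-\infty$, where one should first localize to finite subintervals $[S',T]$ with $S'>-\infty$, apply the argument there, and then pass to the limit using the monotone convergence of the norms $\|\cdot\|_{\bH^{\gamma}_{p,\theta}(S',T)}$ to $\|\cdot\|_{\bH^{\gamma}_{p,\theta}(-\infty,T)}$. Uniqueness on each finite window patches uniqueness on $(-\infty,T)$, and the estimate \eqref{eqn estimate 2} is stable under this limit, so the proof reduces entirely to invoking Lemma \ref{lem constant}.
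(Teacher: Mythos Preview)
Your proposal is correct and follows exactly the approach indicated in the paper: the time change $t\mapsto T-t$ reduces the backward problem to the forward problem of Lemma~\ref{lem constant}, and the norms and ellipticity constants transfer verbatim. Your additional care with the distributional formulation and the $S=-\infty$ case is more detail than the paper provides, but the core argument is identical.
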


     \begin{proof}
     It is enough to consider the time change $t\to -(t-T)$ and use Lemma \ref{lem constant}.
     \end{proof}

Denote
$$
\cQ_r(a)=(0,r^2)\times (a-r,a+r)\times B'_r(0),\quad U_r=(-r^2,r^2)\times (-2r,2r)\times B'_r(0).
$$

\begin{lemma}
                 \label{lemma 3}
Let $d-1<\theta<d-1+p$, $0<s<r <\infty$, $u(t,x)\in C^{\infty}_0(\bR\times \bR^d_+)$ and
$$
u_t+a^{ij}(t) u_{x^ix^j}=0  \quad \quad \text{for} \quad (t,x)\in \cQ_r(r).
$$
 Then for any multi-index  $\beta=(\beta^1,\cdots,\beta^d)$, we have
\begin{eqnarray}
                                   \label{eqn new}
&& \int_{\cQ_s(s)}\left(|M^{-1}D^{\beta}u|^p+|D^{\beta}u_x|^p+|MD^{\beta}u_{xx}|^p\right)(x^1)^{\theta-d}dxdt \nonumber\\
 &\leq& N(1+r)^{|\beta|p}\cdot (1+(r-s)^{-2})^{(|\beta|+1)p}
\int_{\cQ_r(r)}|Mu(t,x)|^p (x^1)^{\theta-d} dxdt,
\end{eqnarray}
where $N=N(\theta,p,|\beta|, \delta,K)$.
\end{lemma}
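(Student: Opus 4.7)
The approach combines a cutoff argument with the global weighted $L_p$ estimate from Lemma~\ref{lem constant} and an induction on $|\beta|$.

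First I would reduce to spatial multi-indices $\beta$: since $a^{ij}$ depends only on $t$, every spatial derivative $D^{\gamma}u$ again solves the homogeneous equation on $\cQ_r(r)$, and the equation gives $u_t=-a^{ij}u_{x^ix^j}$, so time derivatives are bounded by second spatial derivatives. Thus it suffices to prove (\ref{eqn new}) for spatial multi-indices $\beta$, and to do so by induction on $|\beta|$.

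For the base case $\beta=0$, fix $s'\in(s,r)$ and choose a smooth cutoff $\zeta\in C^{\infty}_0(\bR\times\bR^d)$ with $\zeta\equiv 1$ on $\cQ_s(s)$, $\mathrm{supp}\,\zeta\subset\cQ_{s'}(s')$, $\zeta\equiv 0$ for $t\le-1$, and $|D^{k}_x\zeta|+|\zeta_t|^{k/2}\le N(s'-s)^{-k}$. Setting $v:=\zeta u\in C^{\infty}_0(\bR\times\bR^d_+)$, the hypothesis on $u$ gives
\[
v_t+a^{ij}v_{x^ix^j}=F_{\zeta}:=u\zeta_t+2a^{ij}\zeta_{x^i}u_{x^j}+a^{ij}\zeta_{x^ix^j}u
\]
on $\bR\times\bR^d_+$, with $v$ vanishing for $t\le -1$. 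Applying Corollary~\ref{cor 5.1.1} with $\gamma=0$ and invoking (\ref{eqn 10.10.1}) together with Lemma~\ref{lemma 1}\,(iv) yields the preliminary bound
\[
\int_{\cQ_s(s)}\!\bigl(|M^{-1}u|^p+|u_x|^p+|Mu_{xx}|^p\bigr)(x^1)^{\theta-d}\,dxdt\le N(s'-s)^{-2p}\!\int_{\cQ_{s'}(s')}\!\bigl(|Mu|^p+|Mu_x|^p\bigr)(x^1)^{\theta-d}\,dxdt.
\]

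The main obstacle is the first-order $|Mu_x|^p$ term on the right. To eliminate it I would rewrite the $u_{x^j}$-contribution in $F_{\zeta}$ in divergence form, $2a^{ij}\zeta_{x^i}u_{x^j}=2a^{ij}(\zeta_{x^i}u)_{x^j}-2a^{ij}\zeta_{x^ix^j}u$, and apply Corollary~\ref{cor 5.1.1} with the negative smoothness index $\gamma=-1$: the divergence piece then gains an order of regularity in the $\bH^{-1}_{p,\theta}$-norm of $MF_{\zeta}$, and the weighted Hardy-type bounds from Lemma~\ref{lemma 1}\,(vi) (valid thanks to $\theta\in(d-1,d-1+p)$) allow the remaining algebraic pieces to be controlled purely by $\int|Mu|^p(x^1)^{\theta-d}\,dxdt$. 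Combining the two applications of Corollary~\ref{cor 5.1.1} closes the base case with constant $\le N(1+(r-s)^{-2})^p$.

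For general spatial $\beta$, I induct on $|\beta|$. Given the base case on arbitrary nested cylinders, apply it to $D^{\gamma}u$ (still a solution of the homogeneous equation in $\cQ_r(r)$) along a chain $\cQ_s=\cQ_{s_0}\Subset\cQ_{s_1}\Subset\cdots\Subset\cQ_{s_{|\beta|+1}}=\cQ_r$ with $s_{k+1}-s_k\sim(r-s)/(|\beta|+1)$. Each step peels off one spatial derivative (passing from the $|MD^{\gamma}u|^p$ on $\cQ_{s_{k+1}}$ back to the $|D^{\gamma-e}u_x|^p$ piece of the lower-level LHS via $|MD^{\gamma}u|\le 2s_{k+1}|D^{\gamma}u|$), and each iteration therefore contributes a factor of order $(1+s_{k+1})^p(s_{k+1}-s_k)^{-2p}$. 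Multiplying these factors over $|\beta|+1$ iterations produces the claimed constant $N(1+r)^{|\beta|p}(1+(r-s)^{-2})^{(|\beta|+1)p}$.
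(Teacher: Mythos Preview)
Your overall strategy (cutoff plus Corollary~\ref{cor 5.1.1}, then induction on $|\beta|$) matches the paper's, and your induction step is fine. The gap is in the base case, specifically in the proposed elimination of the first--order term via the $\gamma=-1$ estimate.

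When you put the cross term into divergence form and apply Corollary~\ref{cor 5.1.1} with $\gamma=-1$, you need to bound $\|M\,D_{x^j}(a^{ij}\zeta_{x^i}u)\|_{\bH^{-1}_{p,\theta}}$. In the weighted scale only $MD$ (not $D$) is bounded from $H^{0}_{p,\theta}$ to $H^{-1}_{p,\theta}$ (Lemma~\ref{lemma 1}(v)), so the best you obtain is
\[
\|M\,D_{x^j}(a^{ij}\zeta_{x^i}u)\|_{H^{-1}_{p,\theta}}\le N\|\zeta_{x}u\|_{L_{p,\theta}}.
\]
Now the tangential derivatives $\zeta_{x^2},\dots,\zeta_{x^d}$ are supported on a set that reaches all the way down to $\{x^1=0\}$ (the cutoff must transition in $x'$ for every small $x^1$), and on that set $\int|\zeta_{x'}u|^p(x^1)^{\theta-d}\,dxdt$ is \emph{not} controlled by $\int_{\cQ_r(r)}|Mu|^p(x^1)^{\theta-d}\,dxdt=\int|u|^p(x^1)^{\theta-d+p}\,dxdt$: the weights differ by the divergent factor $(x^1)^{-p}$. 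Invoking Lemma~\ref{lemma 1}(vi) does not help here, since Hardy gives $\|u\|_{L_{p,\theta}}\le N\|Mu_x\|_{L_{p,\theta}}$, which both reinserts the derivative you are trying to remove and is a global (not local) inequality. So the $\gamma=-1$ shortcut does not close.

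The paper handles the base case differently: it uses a \emph{nested} family of cutoffs $\zeta_m$ on intermediate cylinders $\cQ_{r_m}(r_m)$, writes the cross term as $2a^{ij}(u\zeta_{m+1})_{x^i}\zeta_{mx^j}$, applies Corollary~\ref{cor 5.1.1} only with $\gamma=0$, and then controls $\|u\zeta_{m+1}\|_{\bH^{1}_{p,\theta}}$ by the interpolation inequality of Lemma~\ref{lemma 1}(vii) between $A_{m+1}:=\|M^{-1}u\zeta_{m+1}\|_{\bH^{2}_{p,\theta}}$ and $B:=\|Mu\|_{\bL_{p,\theta}(\cQ_r(r))}$. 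This yields $A_m\le \varepsilon A_{m+1}+N\varepsilon^{-1}(r-s)^{-2}2^{2m}B$, and a geometric absorption (choosing $\varepsilon$ small, summing $\varepsilon^m A_m$) gives $A_0\le N(r-s)^{-2}B$. Replacing your single--cutoff $\gamma=-1$ step by this iteration--plus--interpolation argument fixes the proof.
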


\begin{proof}
We use the induction on $|\beta|$.

 First, let $|\beta|=0$. We modify the proof of Lemma 2.4.4 of \cite{kr08}.  Denote $r_0=s$ and     $r_m=s+(r-s)\sum_{j=1}^m2^{-j}$ for $m=1,2,\cdots$.
Choose  smooth functions $\zeta_m\in C^{\infty}_0(\bR^{d+1})$ so that  $0\leq \zeta_m\leq 1$,
$$
\zeta_m=1 \quad \text{on} \quad U_{r_m}, \quad \quad \zeta_m=0 \quad \text{on}\quad \bR^{d+1} \setminus U_{r_{m+1}},
$$
\begin{equation}
                 \label{eqn 10.10.5}
|\zeta_{mx}|\leq N (r-s)^{-1}2^m, \quad |\zeta_{mxx}|\leq N(r-s)^{-2} 2^{2m},\quad |\zeta_{mt}|\leq N (r-s)^{-2} 2^{2m}.
\end{equation}
Note that for each $m$, $(u\zeta_m)(r^2,x)=0$  and  $u\zeta_m$ satisfies
$$
(u\zeta_m)_t+a^{ij}(u\zeta_m)_{x^ix^j}=f_m:=\zeta_{mt}u+a^{ij}u\zeta_{mx^ix^j}+2a^{ij}(u\zeta_{m+1})_{x^i}\zeta_{mx^j}, \quad \quad (t,x)\in (0,r^2)\times \bR^d_+.
$$
By Corollary \ref{cor 5.1.1} for $\gamma=0$,
$$
A_m:=\|M^{-1}u\zeta_m\|_{\bH^2_{p,\theta}(r^2)}\leq N \|Mf_m\|_{\bL_{p,\theta}(r^2)}.
$$
Denote $B:= (\int_{\cQ_r(r)}|Mu|^p (x^1)^{\theta-d}dxdt)^{1/p}$. Then by (\ref{eqn 10.10.5}) and  Lemma \ref{lemma 1},
\begin{eqnarray*}
&&\|\zeta_{mt}Mu+a^{ij}Mu\zeta_{mx^ix^j}\|_{\bL_{p,\theta}(r^2)} \leq N(r-s)^{-2}2^{2m}B,\\
&&\|a\zeta_{mx}M(u\zeta_{m+1})_{x}\|_{\bL_{p,\theta}(r^2)}\leq N(r-s)^{-1}2^{m}\|M(u\zeta_{m+1})_x\|_{\bL_{p,\theta}(r^2)}
\leq N(r-s)^{-1}2^{m}\|u\zeta_{m+1}\|_{\bH^1_{p,\theta}(r^2)}.
\end{eqnarray*}
By Lemma \ref{lemma 1} (vii) (take $p_0=p_1=p, \gamma=1,\gamma_0=0,\gamma_1=2,\theta_0=\theta+p,\theta_1=\theta-p$ and $\kappa=1/2$)  for any $\varepsilon>0$
$$
(r-s)^{-1}2^{m}\|u\zeta_{m+1}\|_{\bH^1_{p,\theta}(r^2)}\leq \varepsilon A_{m+1}+ \varepsilon^{-1}(r-s)^{-2}2^{2m}B.
$$
It follows that  (with $\varepsilon$ different from the one above),
$$
A_{m}\leq \varepsilon A_{m+1}+ N(1+\varepsilon^{-1})(r-s)^{-2}2^{2m}B.
$$
We take $\varepsilon=\frac{1}{16}$ and get
$$
\varepsilon^m A_m\leq  \varepsilon^{m+1}A_{m+1}+N\varepsilon^m(1+\varepsilon^{-1})2^{2m}(r-s)^{-2}B,
$$
$$
A_0+\sum_{m=1}^{\infty}\varepsilon^mA_m\leq \sum_{m=1}^{\infty}\varepsilon^mA_m+N(r-s)^{-2}B.
$$
Note that the series $\sum_{m=1}\varepsilon^mA_m$ converges because  $A_m\leq N2^{2m}\|M^{-1}u\|_{\bH^2_{p,d}(r^2)}$.   By Lemma \ref{lemma 1}(v) and (vi), for any $M^{-1}w\in H^2_{p,\theta}$,
\begin{equation}
                              \label{eqn 4.24.6}
      \|M^{-1}w\|_{H^2_{p,\theta}}     \sim (\|M^{-1}w\|_{L_{p,\theta}}+\|w_x\|_{L_{p,\theta}}+\|Mw_{xx}\|_{L_{p,\theta}}).
 \end{equation}
 Therefore,
$$
\int_{\cQ_s(s)}\left(|M^{-1}u|^p+|u_x|^p+|Mu_{xx}|^p\right) (x^1)^{\theta-d}dxdt\leq N A^p_0 \leq N(r-s)^{-2p}B^p,
$$
and (\ref{eqn new}) is proved for $|\beta|=0$.

 Next, assume that (\ref{eqn new}) holds  whenever $s<r$ and  $|\beta'|=k$, that is
\begin{eqnarray}
&& \int_{\cQ_s(s)}\left(|M^{-1}D^{\beta'}u|^p+|D^{\beta'}u_x|^p+|MD^{\beta'}u_{xx}|^p\right)(x^1)^{\theta-d}dxdt \nonumber\\
 &\leq& N(1+r)^{kp}\cdot (1+ (r-s)^{-2})^{(k+1)p}
\int_{\cQ_r(r)}|Mu(t,x)|^p (x^1)^{\theta-d} dxdt. \label{eqn 10.1.1}
\end{eqnarray}
Let $|\beta|=k+1$ and $D^{\beta}=D_iD^{\beta'}$ for some $i$ and $\beta'$ with $|\beta'|=k$.  Fix
  a smooth function $\eta$  so that  $\eta=1$ on $U_s$, $\eta=0$ on $\bR^{d+1}\setminus U_{(r+s)/2}$, $|\eta_x|\leq N(r-s)^{-1}, |\eta_{xx}|\leq N(r-s)^{-2}$ and
  $|\eta_t|\leq N(r-s)^{-2}$.
 Note that $v:=\eta D^{\beta}u$ satisfies $v(r^2,\cdot)=0$ and
$$
v_t+a^{ij}v_{x^ix^j}=f:=\eta_{t}D^{\beta}u+2a^{ij}\eta_{x^i}D^{\beta}u_{x^j}+a^{ij}\eta_{x^ix^j}D^{\beta}u, \quad \quad (t,x)\in (0,r^2)\times \bR^d_+.
$$
By Corollary \ref{cor 5.1.1} for $\gamma=0$ (also note that $x^1\leq r$ on the support of $\eta$ and $(r-s)^{-1}\leq 1+(r-s)^{-2}$),
\begin{eqnarray*}
\|M^{-1}v\|^p_{\bH^2_{p,\theta}(r^2)} &\leq& N \|M\eta_{t}D^{\beta}u+2a\eta_{x}MD^{\beta}u_x+Ma\eta_{xx}D^{\beta}u\|^p_{\bL_{p,\theta}(r^2)}\\
&\leq&N (1+r)^p(1+(r-s)^{-2})^p \int_{\cQ_{(s+r)/2}((s+r)/2)}\left(|D^{\beta}u|^p+|MD^{\beta}u_x|^p\right)\mu(dtdx)\\
&\leq&N (1+r)^p(1+(r-s)^{-2})^p \int_{\cQ_{(s+r)/2}((s+r)/2)}\left(|D^{\beta'}u_x|^p+|MD^{\beta'}u_{xx}|^p\right)\mu(dtdx).
\end{eqnarray*}
This, (\ref{eqn 4.24.6}) and (\ref{eqn 10.1.1}) show that  the induction goes through, and hence the lemma is proved.
\end{proof}

The following result can be found e.g. in \cite{KL11}, and we give a outline of the proof for the sake of the completeness.
\begin{lemma}
                   \label{lemma 1111}
Let $u(t,x)\in C^{\infty}_0(\bR\times \bR^d_+)$. Then for any
$T>0$, $p>1$ and $n=0,1,2,\cdots$,
$$
\sup_{t\in [0,T]}\|u(t,\cdot)\|_{H^n_{p,\theta}}\leq
N(\|u\|_{\bH^n_{p,\theta}(T)}+\|u_t\|_{\bH^n_{p,\theta}(T)}).
$$
\end{lemma}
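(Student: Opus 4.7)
My plan is to exploit the equivalent norm characterization (\ref{eqn 10.10.1}), valid since $n$ is a nonnegative integer, in order to rewrite
\[
\|u(t,\cdot)\|^{p}_{H^{n}_{p,\theta}} \sim \sum_{|\alpha|\le n}\int_{\bR^d_+}|(x^1)^{|\alpha|}D^\alpha u(t,x)|^{p} (x^1)^{\theta-d}\,dx.
\]
This converts the left-hand side of the lemma into the time-supremum of a weighted spatial $L_{p}$-integral of pointwise derivatives of $u$, which is exactly the sort of quantity amenable to a pointwise-in-$x$ Sobolev embedding in $t$.

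First, for each fixed $x\in\bR^d_+$ and each multi-index $\alpha$ with $|\alpha|\le n$, I would regard $t\mapsto D^\alpha u(t,x)$ as a smooth scalar function of compact support in $\bR$, and invoke the one-dimensional Sobolev embedding $W^{1,p}(0,T)\hookrightarrow L_\infty(0,T)$. This embedding follows from the identity $g(t)=g(s)+\int_s^t g'(r)\,dr$, averaging in $s$ over $(0,T)$, and H\"older's inequality, with a constant depending only on $T$ and $p$. Thus, pointwise in $x$,
\[
\sup_{t\in[0,T]}|D^\alpha u(t,x)|^{p} \le N(T,p)\int_0^T\bigl(|D^\alpha u(t,x)|^{p}+|D^\alpha u_t(t,x)|^{p}\bigr)\,dt.
\]

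Second, I would interchange the time supremum with the spatial integral via the trivial inequality $\sup_t \int f\,dx \le \int \sup_t f\,dx$ (valid for nonnegative $f$), insert the fixed weight $(x^1)^{|\alpha|p+\theta-d}$, apply the pointwise bound from Step~1, and then use Fubini to swap $dt$ and $dx$. Summing over $|\alpha|\le n$ and invoking (\ref{eqn 10.10.1}) in reverse yields
\[
\sup_{t\in[0,T]}\|u(t,\cdot)\|^{p}_{H^{n}_{p,\theta}} \le N\bigl(\|u\|^{p}_{\bH^{n}_{p,\theta}(T)}+\|u_t\|^{p}_{\bH^{n}_{p,\theta}(T)}\bigr),
\]
with $N=N(T,p,n,\theta,d)$, which after taking $p$-th roots is the claim.

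There is no serious obstacle in this argument: the only delicate points are that (i) the constant necessarily depends on $T$, which is consistent with how the lemma is stated, and (ii) the equivalent-norm identity (\ref{eqn 10.10.1}) is available precisely because $n$ is an integer, matching the hypothesis. An alternative route, needed for non-integer $\gamma$, would use the dyadic definition (\ref{def weight}), apply a Banach-valued Sobolev embedding to the curves $t\mapsto\zeta(\cdot)u(t,e^k\cdot)\in H^{n}_{p}$ for each $k\in\bZ$, and then use $\sup_t\sum_k\le\sum_k\sup_t$; but for the integer-$n$ case stated here, the pointwise approach above is cleaner.
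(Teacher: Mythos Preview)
Your proof is correct, and it takes a genuinely different route from the paper's. The paper applies the one-dimensional Sobolev embedding $W^1_p(0,T)\hookrightarrow L_\infty(0,T)$ directly to the scalar function $\phi(t):=\|u(t,\cdot)\|_{H^n_{p,\theta}}$, and then must verify that $\phi\in W^1_p((0,T))$ with $|\phi'(t)|\le\|u_t(t,\cdot)\|_{H^n_{p,\theta}}$ --- a statement about the almost-everywhere differentiability of a Banach-space norm along a smooth curve, which the paper handles by referring to an exercise in \cite{kr08}. You instead unpack the norm via the integer-exponent identity (\ref{eqn 10.10.1}), apply the same one-dimensional embedding pointwise in $x$ to each scalar path $t\mapsto D^{\alpha}u(t,x)$, and then pull $\sup_t$ inside the spatial integral before invoking Fubini. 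Your argument is more elementary --- it bypasses the norm-differentiability step entirely --- but, as you note, it relies on (\ref{eqn 10.10.1}) and so does not immediately extend to non-integer $\gamma$; the paper's approach, by contrast, works verbatim for any Banach norm once the bound $|\phi'|\le\|u_t\|$ is established. You correctly identify this trade-off in your closing remark.
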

\begin{proof}
First of all, it is easy to check that for any $\phi=\phi(t)\in W^1_p((0,T))$ (see \cite{kr08}, p.32)
$$
\sup_{t}|\phi(t)|^p\leq N \int^t_0(|\phi|^p+|\phi'(t)|^p)dt.
$$
Thus  it suffices to prove
\begin{equation}
                                          \label{eqn 12.19}
\phi(t):=\|u(t,\cdot)\|_{H^n_{p,\theta}}\in W^1_p((0,T)), \quad |\phi'(t)|\leq\|u_t(t,\cdot)\|_{H^n_{p,\theta}}.
\end{equation}
One can  prove (\ref{eqn 12.19}) by repeating  the proof of Exercise 2.4.8 of \cite{kr08} (see p.71). It is enough to replace $H^n_p$ there by $H^n_{p,\theta}$.
\end{proof}

By $C^{\infty}_{loc}(\Omega)$ we denote the set of
real-valued functions $u$ defined on $\Omega$ and such
that $\zeta u\in C^{\infty}_0(\Omega)$ for any
$\zeta\in C^{\infty}_0(\Omega)$.

\begin{lemma}\label{101004.14.53}
Let  $\theta\in (d-1,d-1+p)$,  $s\in (0,r)$ and  $u\in C^{\infty}_{loc}(\Omega)$ satisfies\;
$u_t+a^{ij}(t)u_{x^ix^j}=0$ for $(t,x)\in \cQ_r(r)$. Then for any multi-index $\beta=(\beta^1,\beta^2,\cdots,\beta^d)$,
\begin{eqnarray}
\max_{(t,x)\in \cQ_s(s)}(|D^{\beta}u_{xx}|^p+|D^{\beta}u_{t}|^p)\le N
\int_{\cQ_r(r)}|u|^p (x^1)^{\theta-d+p}dxdt,\nonumber
\end{eqnarray}
where  $N=N(\theta, s,r,\beta,p,\delta,K)$.
\end{lemma}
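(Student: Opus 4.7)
The plan is to combine three ingredients: Lemma~\ref{lemma 3} applied iteratively to gain spatial regularity, Lemma~\ref{lemma 1111} to pass to a supremum in $t$, and the weighted Sobolev embedding in Lemma~\ref{lemma 1}(ii) to convert $H^{\gamma}_{p,\theta}$ bounds into pointwise estimates. Two reductions are immediate. First, because $a^{ij}=a^{ij}(t)$ does not depend on $x$, every spatial derivative $D^{\alpha}u$ again satisfies the homogeneous equation on $\cQ_r(r)$, so Lemma~\ref{lemma 3} may be applied to $D^{\alpha}u$. Second, the equation $u_t=-a^{ij}u_{x^ix^j}$ lets us trade each $t$-derivative for two $x$-derivatives, so it suffices to obtain a pointwise bound on $|D^{|\beta|+2}u|$ on $\cQ_s(s)$; the bound on $|D^{\beta}u_t|$ then follows from the PDE together with $|a^{ij}|\le K$.

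Fix an increasing chain $s=s_0<s_1<\cdots<s_N<r$ and apply Lemma~\ref{lemma 3} iteratively on the nested pairs $\cQ_{s_k}(s_k)\subset\cQ_{s_{k+1}}(s_{k+1})$. Using the identity $|Mu|^p(x^1)^{\theta-d}=|u|^p(x^1)^{\theta-d+p}$ and gaining one more order of spatial derivatives at each step, after a bounded number of iterations one obtains, for any prescribed integer $n$,
\[
\|u\|^{p}_{\bH^{n}_{p,\theta}(\cQ_{s_1}(s_1))}\le N(\theta,s,r,n,p,\delta,K)\int_{\cQ_r(r)}|u|^p(x^1)^{\theta-d+p}\,dx\,dt.
\]

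Next, fix a cutoff $\chi\in C^{\infty}_0$ with $\chi=1$ on $\cQ_s(s)$ and $\mathrm{supp}\,\chi\subset \cQ_{s_1}(s_1)$. The product rule and the PDE express $(\chi u)_t$ in terms of $\chi_t u$, $\chi_x u_x$, $\chi u_{xx}$ and $\chi_{xx} u$, all of which are controlled by the right-hand side after the iteration above; in particular $\|\chi u\|_{\bH^{n}_{p,\theta}}$ and $\|(\chi u)_t\|_{\bH^{n}_{p,\theta}}$ are bounded by the same right-hand side (after slightly increasing $n$ in the previous step to account for the two extra spatial derivatives produced by the PDE). Lemma~\ref{lemma 1111} applied to $\chi u$ then yields a uniform-in-$t$ bound on $\|(\chi u)(t,\cdot)\|_{H^{n}_{p,\theta}}$. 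Finally, choosing $n=m+1+d/p$ with $m\ge |\beta|+2$ and invoking Lemma~\ref{lemma 1}(ii) with $i=|\beta|+2$ gives the pointwise estimate $|M^{i+\theta/p}D^{i}(\chi u)(t,x)|\le N\|(\chi u)(t,\cdot)\|_{H^{n}_{p,\theta}}$; since $\chi u=u$ on $\cQ_s(s)$ and $x^1$ is bounded above there by $2s$, this gives the required pointwise control of $|D^{\beta}u_{xx}|$ (with the admissible $M$-weight folded into the $s,r,\beta,\theta,p$-dependent constant), and then $|D^{\beta}u_t|$ is recovered via the PDE.

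The main technical difficulty is the bookkeeping of constants through the iteration of Lemma~\ref{lemma 3}: each step introduces factors $(1+(r-s_k)^{-2})^{(|\alpha|+1)p}$ and a power of the intermediate radius, but only a bounded number of iterations (determined by $|\beta|$, $d$ and $p$) is needed, so these combine into the single constant $N(\theta,s,r,\beta,p,\delta,K)$. A secondary technical point is that $u\in C^{\infty}_{\mathrm{loc}}(\Omega)$ need not extend smoothly to $\{x^1=0\}$, so strictly speaking $\chi u$ may fail to lie in $C^{\infty}_0(\Omega)$; this is handled by first performing the argument on the truncated region $\{x^1>\varepsilon\}\cap\cQ_r(r)$, where $\chi u\in C^{\infty}_0(\bR^{d+1})$ automatically, and then letting $\varepsilon\downarrow 0$, using that the right-hand side is monotone in $\varepsilon$.
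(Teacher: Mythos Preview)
Your overall strategy—combine Lemma~\ref{lemma 3}, a cutoff, Lemma~\ref{lemma 1111}, and the weighted Sobolev embedding of Lemma~\ref{lemma 1}(ii)—is exactly what the paper does. (Incidentally, the ``iteration'' of Lemma~\ref{lemma 3} is unnecessary: that lemma already delivers control of $D^{\beta}u_{xx}$ for \emph{any} $\beta$ in a single step on a fixed pair $s<r$.) However, your final embedding step contains a genuine gap.

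You apply Lemma~\ref{lemma 1}(ii) with $i=|\beta|+2$ to obtain
\[
\bigl|M^{\,i+\theta/p}D^{i}(\chi u)(t,x)\bigr|\le N\|(\chi u)(t,\cdot)\|_{H^{n}_{p,\theta}},
\]
and then assert that, since $x^1\le 2s$ on $\cQ_s(s)$, the weight can be absorbed into the constant. This is the wrong direction: because $i+\theta/p>0$, the inequality only yields $|D^{i}(\chi u)(t,x)|\le N(x^1)^{-(i+\theta/p)}\|(\chi u)(t,\cdot)\|_{H^{n}_{p,\theta}}$, which blows up as $x^1\to 0$. The region $\cQ_s(s)=(0,s^2)\times(0,2s)\times B'_s(0)$ contains points with $x^1$ arbitrarily close to $0$, so no lower bound on $x^1$ is available. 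Your $\varepsilon$-truncation does not repair this either: it produces a bound with constant proportional to $\varepsilon^{-(i+\theta/p)}$, which diverges as $\varepsilon\downarrow 0$.

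The paper avoids this by a small but essential twist. With $v:=(D^{\beta}\psi u)_{xx}$ and $n$ the least integer with $np>\theta\vee d$, it applies Lemma~\ref{lemma 1}(ii) with $i=0$ to the function $M^{-n}v$, obtaining $\sup_x|M^{\theta/p-n}v|\le N\|M^{-n}v\|_{H^{n}_{p,\theta}}$. Since $n-\theta/p>0$ and $v$ is supported in $\{x^1\le r\}$, this gives $\sup_x|v|\le N(r)\|M^{-n}v\|_{H^{n}_{p,\theta}}$ with no degeneration at $x^1=0$. The right side is then bounded by $\|D^n v\|_{L_{p,\theta}}$ via Remark~\ref{remark last} (a Hardy-type inequality requiring precisely $\theta\in(d-1,d-1+p)$), which feeds into Lemma~\ref{lemma 3}. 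Replacing your direct use of Lemma~\ref{lemma 1}(ii) at large $i$ by this $M^{-n}$ shift fixes the argument.
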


\begin{proof}  Choose the smallest integer $n$ so that $np>(\theta \vee d)$.
Note that if  $v\in C^{\infty}_0(\bR^d_+)$ and  $v(x)=0$ for $x^1 \geq r$, then by Lemma \ref{lemma 1}(ii) with $\gamma=n$, $i=0$  and $u=M^{-n}v$,
\begin{equation}
              \label{eqn 12.10.1}
\sup_{x}|v(x)|\leq N(r)\sup_{x}|M^{\theta/p}M^{-n}v(x)|\leq N\|M^{-n}v\|_{H^n_{p,\theta}}\leq N(r,p,n)\|D^nv\|_{L_{p,\theta}},
\end{equation}
where for the last inequality we use Remark \ref{remark last}.

Fix $\kappa\in (s,r)$. Let  $\psi$  be a smooth function so that
$\psi(t,x)=1$ for $(t,x)\in \cQ_{s}(s)$ and $ \psi=0$ for $(t,x)\not\in U_{\kappa}$.  Then $\psi_x=\psi_t=0$ on $\cQ_s(s)$. It follows from (\ref{eqn 12.10.1}), Lemma \ref{lemma 1111} and Lemma \ref{lemma 3}  that
\begin{eqnarray*}
\max_{\cQ_{s}(s)} \left(|D^{\beta}u_{xx}|^p+|D^{\beta}u_{t}|^p\right)&\leq& N\max_{(t,x)\in \cQ_{s}(s)} | (D^{\beta}\psi u)_{xx}|^p\\
&\leq &N\max_{t\in [0,s^2]}\|D^n_x
(D^{\beta}\psi u)_{xx}\|^p_{L_{p,\theta}}
\\
 &\leq& N
\left(\|D^n_x(D^{\beta}\psi u)_{xx}\|^p_{\bL_{p,\theta}(s^2)}+\| D^n_x(D^{\beta}\psi u_t)_{xx}\|^p_{\bL_{p,\theta}(s^2)}\right)\\
&\leq &N \sum_{|\alpha|\leq n+|\beta|+4}\int_{Q_{\kappa}(\kappa)}|D^{\alpha}u|^p (x^1)^{\theta-d}\,dxdt\\
& \leq& N  \int_{\cQ_r(r)}|Mu|^p(x^1)^{\theta-d}dxdt.
\end{eqnarray*}
The lemma is proved.
 \end{proof}

\mysection{Sharp function estimates for equations with  coefficients independent of $x$}
       \label{section sharp}

In this section we introduce some  results  developed in \cite{KL11} with detailed proofs for the sake of completeness, and extend Theorem \ref{101004.16.53} and Theorem \ref{theorem 5.2.1} to wider range of weights. These theorems are proved in \cite{KL11} only for $\theta\in (d-1,d]$ and we extend
them  for any $\theta\in (d-1,d-1+p)$.

Denote $\nu^1_{\alpha}(dx^1)=(x^1)^{\alpha}dx^1$. Recall that
$$\nu_{\al}(dx)=(x^1)^{\alpha}dx^1dx'=\nu^1_{\al}(dx^1)dx', \quad \quad \cB_r(a)=(a-r,a+r)\times B'_r(0).
$$

 We start with a weighted Poincar\'e's inequality.

\begin{lemma}(\cite{KL11})
                                  \label{2010.03.23.3}
Let $\alpha>0$, $p\in [1,\infty)$, $\cB_r(a)\subset \mathbb{R}^d_+$,
and $u\in C^{\infty}_{loc}(\mathbb{R}^d_+)$. Then
\begin{eqnarray}
\int_{\cB_r(a)}\int_{\cB_r(a)}|u(x)-u(y)|^p \nu_{\al}(dx)\; \nu_{\al}(dy)\le
2^{\alpha+1} (2r)^p\nu_{\al}(\cB_r(a))\int_{\cB_r(a)}|u_x(x)|^p
\nu_{\al}(dx).\label{2010.03.17.1}
\end{eqnarray}
\end{lemma}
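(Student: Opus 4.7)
The plan is to use the fundamental theorem of calculus along straight-line paths connecting points in the convex set $\cB_r(a)$, and then perform a weighted change of variables. Since $\cB_r(a) = (a-r,a+r)\times B'_r(0)$ is convex, for $x,y\in\cB_r(a)$ and $t\in[0,1]$ the point $\gamma(t):=tx+(1-t)y$ lies in $\cB_r(a)$, and
\[
u(x)-u(y)=\int_0^1 \nabla u(\gamma(t))\cdot (x-y)\,dt.
\]
Jensen's inequality together with a componentwise bound $|x-y|\le 2r$ produces the pointwise estimate
\[
|u(x)-u(y)|^p\le (2r)^p\int_0^1|u_x(\gamma(t))|^p\,dt.
\]

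Next I would integrate this against $(x^1)^\alpha (y^1)^\alpha\,dx\,dy$ over $\cB_r(a)\times\cB_r(a)$, split the $t$-integral at $1/2$, and invoke the symmetry $x\leftrightarrow y$ (which corresponds to $t\leftrightarrow 1-t$) to reduce matters to the contribution from $t\in[1/2,1]$. On this half, with $y$ and $t$ fixed, I perform the change of variables $z=\gamma(t)=tx+(1-t)y$, so that $dx=t^{-d}\,dz$ with $t^{-d}\le 2^d$, and $z$ ranges over $t\,\cB_r(a)+(1-t)y\subset \cB_r(a)$ by convexity; this lets me extend the inner integration to all of $\cB_r(a)$ as an upper bound.

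The heart of the argument is the weight comparison, where the hypothesis $\alpha>0$ is used in an essential way. Because $y^1\ge 0$ and $t\ge 1/2$,
\[
z^1=tx^1+(1-t)y^1\ge tx^1\ge \tfrac{1}{2}x^1,
\]
hence $(x^1)^\alpha\le 2^\alpha (z^1)^\alpha$, which accounts for the factor $2^\alpha$ appearing in the constant. After this comparison the inner $z$-integral becomes $\int_{\cB_r(a)}|u_x(z)|^p\,\nu_\alpha(dz)$, the outer integration in $y$ produces $\nu_\alpha(\cB_r(a))$, and the remaining factor of $2$ in $2^{\alpha+1}$ comes from reassembling the two symmetric halves of the $t$-integration.

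\emph{Main obstacle:} the delicate point is the weight comparison along the path, which relies crucially on the monotonicity of $(x^1)^\alpha$ (requiring $\alpha>0$) \emph{and} on restricting to $t\ge 1/2$ so that $\gamma^1(t)$ stays comparable to $x^1$; for $\alpha<0$ the analogous argument would need to be run in the opposite direction, moving toward the boundary. The secondary technical issue is keeping the Jacobian factor $t^{-d}$ under control, which is precisely why splitting the $t$-interval at $1/2$ and exploiting the $x\leftrightarrow y$ symmetry is the natural device.
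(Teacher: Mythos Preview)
Your approach is essentially identical to the paper's: fundamental theorem of calculus along segments, Jensen, the symmetry $I(t)=I(1-t)$ to restrict to $t\in[1/2,1]$, the change of variables $z=tx+(1-t)y$, and the weight comparison $(x^1)^\alpha\le (z^1/t)^\alpha$ using $\alpha>0$. One minor bookkeeping point (present in the paper as well): the Jacobian contributes $t^{-d}$, so the argument as written actually yields the constant $2^{\alpha+d}$ rather than $2^{\alpha+1}$; this is immaterial for the applications, where only some finite constant depending on $\alpha,p,d$ is needed.
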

\begin{proof}
For $x,y\in \cB_r(a)$  we have
\begin{eqnarray}
|u(x)-u(y)|^p\le (2r)^p\int^1_0 |u_x(tx+(1-t)y)|^pdt\nonumber
\end{eqnarray}
and the left-hand side of $(\ref{2010.03.17.1})$ is less than or equal to
\begin{eqnarray}
(2r)^p\int^1_0 I(t)dt=2(2r)^p \int^1_{1/2} I(t)dt,\nonumber
\end{eqnarray}
where
\begin{eqnarray}
I(t):=\int_{\cB_r(a)}\int_{\cB_r(a)}|u_x(tx+(1-t)y)|^p \nu_{\al}(dx)\;
\nu_{\al}(dy)\nonumber
\end{eqnarray}
and $I$ satisfies $I(t)=I(1-t)$. For each $t\in [1/2,1]$ and $y$, $t\cB_r(a)+(1-t)y:=\{tz+(1-t)y:z\in
\cB_r(a)\}\subset \cB_r(a)$.
Substituting $w=tx+(1-t)y$ and noticing $x^1=(w^1-(1-t)y^1)/t\leq w^1/t$ since $y^1\geq 0$, we get
\begin{eqnarray}
I(t)&\leq&t^{-\alpha-1}\int_{\cB_r(a)}\left(\int_{t\cB_r(a)+(1-t)y}|u_x(w)|^p\nu_{\al}(dw)\right)\nu_{\al}(dy)\nonumber\\
&\le&
2^{\alpha+1}\int_{\cB_r(a)}\left(\int_{\cB_r(a)}|u_x(x)|^p\nu_{\al}(dx)\right)\nu_{\al}(dy)\nonumber\\
&=&2^{\alpha+1}\nu_{\al}(\cB_r(a))\int_{\cB_r(a)}|u_x(x)|^p\nu_{\al}(dx).\nonumber
\end{eqnarray}
 Now, (\ref{2010.03.17.1}) follows.
\end{proof}

  \begin{lemma}(\cite{KL11})
                        \label{2010.03.23.2}
Let $\alpha>0$. Denote $\nu_{\alpha}^1(dx^1)=(x^1)^{\alpha}dx^1$. For any $\cB^1_r(a):=(a-r,a+r)\subset \mathbb{R}_+$ we have a non-negative
function $\zeta\in C^{\infty}_0(\mathbb{R}_+)$ and a constant $N=N(\alpha)$ such that
\begin{equation}
                           \label{2011.12.10.1}
supp(\zeta)\in \cB^1_{r}(a), \quad \quad  \int_{\cB^1_r(a)}\zeta(x^1)\nu^1_{\alpha}(dx^1)=1,
\end{equation}
\begin{equation}
                                  \label{2010.03.19.1}
\sup_x\zeta \cdot \nu_{\alpha}^1(\cB^1_r(a))\le N,\quad \quad  \sup_x |\zeta_{x^1}|\cdot \nu_{\alpha}^1(\cB^1_r(a))\le
\frac{N}{r}.
\end{equation}

\end{lemma}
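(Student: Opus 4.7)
The plan is to construct $\zeta$ by simply rescaling a single fixed reference bump. The critical observation I will exploit is that $\cB^1_r(a)\subset\mathbb{R}_+$ forces $a\geq r$, so the ratio $(a+r)/a\leq 2$ is controlled by an absolute constant; consequently $(x^1)^\alpha$ varies by at most a factor $2^\alpha$ across the ball, which lets me avoid any case-split based on the ratio $r/a$.

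Concretely, I will fix once and for all a nonnegative $\zeta_0\in C^\infty_0((0,1))$ with $\int_0^1\zeta_0(y)\,dy=1$, and set
\begin{equation*}
\zeta(x^1):=c\,\zeta_0\!\left(\frac{x^1-a}{r}\right),\qquad c:=\left(r\int_0^1\zeta_0(y)(a+ry)^\alpha\,dy\right)^{-1}.
\end{equation*}
The scaling puts $\mathrm{supp}(\zeta)\subset(a,a+r)\subset\cB^1_r(a)$, and the definition of $c$ forces $\int\zeta(x^1)\,\nu^1_\alpha(dx^1)=1$, which gives (\ref{2011.12.10.1}). The minor design point here is to place $\mathrm{supp}(\zeta_0)$ inside $(0,1)$ rather than the more symmetric $(-1,1)$: this keeps $\mathrm{supp}(\zeta)$ in the right half of $\cB^1_r(a)$, where the weight is automatically $\geq a^\alpha$ regardless of how close $a-r$ gets to $0$.

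For the sup-norm estimates in (\ref{2010.03.19.1}), I will use that $\alpha>0$ together with $y\in(0,1)$ gives $(a+ry)^\alpha\geq a^\alpha$, hence $c\leq(ra^\alpha)^{-1}$; and trivially $\nu^1_\alpha(\cB^1_r(a))\leq 2r(a+r)^\alpha\leq 2^{\alpha+1}ra^\alpha$ via $a+r\leq 2a$. Since $\sup_x\zeta\leq c\sup\zeta_0$ and $\sup_x|\zeta_{x^1}|\leq (c/r)\sup|\zeta_0'|$, multiplying through by the upper bound on $\nu^1_\alpha(\cB^1_r(a))$ yields
\begin{equation*}
\sup_x\zeta\cdot\nu^1_\alpha(\cB^1_r(a))\leq 2^{\alpha+1}\sup\zeta_0,\qquad \sup_x|\zeta_{x^1}|\cdot\nu^1_\alpha(\cB^1_r(a))\leq \frac{2^{\alpha+1}\sup|\zeta_0'|}{r},
\end{equation*}
with constants depending only on $\alpha$ and the fixed $\zeta_0$.

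I do not anticipate a serious obstacle; the lemma is essentially a scaling calculation. The only place where care is genuinely required is the degenerate regime $a\sim r$ (where $a-r$ may be arbitrarily close to $0$ and the weight $(x^1)^\alpha$ vanishes at the left endpoint of the ball): pushing $\mathrm{supp}(\zeta_0)$ into $(0,1)$ from the outset is precisely what sidesteps this issue and makes the whole argument a single uniform estimate.
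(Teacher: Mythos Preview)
Your argument is correct. Both the support and normalization claims in (\ref{2011.12.10.1}) follow immediately from your construction, and your chain of inequalities for (\ref{2010.03.19.1}) is clean: $c\le (ra^{\alpha})^{-1}$ from $(a+ry)^{\alpha}\ge a^{\alpha}$ (using $\alpha>0$), together with $\nu^1_{\alpha}(\cB^1_r(a))\le 2r(a+r)^{\alpha}\le 2^{\alpha+1}ra^{\alpha}$ (using $r\le a$), gives exactly the stated bounds with $N=2^{\alpha+1}\max(\sup\zeta_0,\sup|\zeta_0'|)$.

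The paper's construction is slightly different in mechanism. It sets
\[
\zeta(x^1)=\frac{(x^1)^{-\alpha}}{r}\,\psi\!\left(\frac{x^1-a}{r}\right)
\]
with $\psi\in C^\infty_0((-1/2,1/2))$ and $\int\psi=1$, so that the integral condition $\int\zeta\,d\nu^1_{\alpha}=1$ holds \emph{exactly} by cancellation of the weight, with no normalizing constant. The price is that $\zeta$ now carries the factor $(x^1)^{-\alpha}$, so the derivative estimate picks up two terms via the product rule, and one must invoke $x^1\ge a/2$ on the support and then $1/a\le 1/r$ to close. Your version trades the automatic normalization for a simpler derivative computation (a single term, no product rule), and your placement of $\mathrm{supp}(\zeta_0)\subset(0,1)$ rather than a symmetric interval yields $x^1\ge a$ directly, which is what makes your bound on $c$ one line. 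Either route is fine; yours is marginally more elementary.
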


\begin{proof}
Choose a nonnegative smooth function $\psi=\psi(x^1)\in C^{\infty}_0(\cB^1_{1/2}(0))$ so that $\int_{\bR} \psi(x^1) dx^1=1$ and $\psi(x^1)=0$ for $|x^1|\geq 1/2$.
Define
$$
\zeta(x^1)=\frac{(x^1)^{-\alpha}}{r}\psi(\frac{x^1-a}{r}).
$$
Then  (\ref{2011.12.10.1}) is obvious. Since $r\leq a$ and $(a+r)^{\alpha+1}-(a-r)^{\alpha+1}\leq 2r(\alpha+1) (2a)^{\alpha}$,
\begin{eqnarray*}
\sup |\zeta| \cdot  \nu_{\alpha}^1(\cB^1_r(a)) &\leq &N\sup_{|x^1-a|\leq r/2} \frac{(x^1)^{-\alpha}}{r} \cdot ((a+r)^{\alpha+1}-(a-r)^{\alpha+1})\\
&\leq& N \frac{(a/2)^{-\alpha}}{r}\cdot ((a+r)^{\alpha+1}-(a-r)^{\alpha+1})\leq N.
\end{eqnarray*}
Similarly, the last inequality also holds because
\begin{eqnarray*}
\sup |\zeta_{x^1}| \cdot  \nu_{\alpha}^1(B^1_r(a)) &\leq& N\sup_{|x^1-a|\leq r/2} \left(\frac{(x^1)^{-\alpha}}{r^2}+ \frac{(x^1)^{-\alpha-1}}{r}\right)\cdot ((a+r)^{\alpha+1}-(a-r)^{\alpha+1})\\
&\leq& \frac{N}{r}(1 +  \frac{(2a)^{\alpha+1}}{(a/2)^{\alpha+1}})\leq \frac{N}{r}.
\end{eqnarray*}
The lemma is proved.
\end{proof}

Recall that for $t\in \bR$, $a\in\bR_+$ and  $x'\in \bR^{d-1} $
$$
\cQ_r(t,a,x'):=(t,t+r^2)\times (a-r,a+r)\times B'_r(x'), \quad  \cQ_r(a):=\cQ_r(0,a,0).
$$
{\underline{From this point on    we fix $\alpha:=\theta-d+p$}} and denote
$$
\nu^1(dx^1):=(x^1)^{\alpha}dx^1,  \quad   \quad \mu(dt dx)=\nu(dx)dt:=(x^1)^{\alpha}dxdt,
$$
$$
u_{\cQ_r(a)}=\frac{1}{\mu(\cQ_r(a))}\int_{\cQ_r(a)}u(t,x) \mu(dxdt).
$$

\begin{lemma}(\cite{KL11})
                              \label{key lemma 2}
Let $p\in [1,\infty)$,
$f^i,g\in C^{\infty}_{loc}(\Omega)$. Assume that $u\in C^{\infty}_{loc}(\Omega) $
satisfies the equation
\begin{eqnarray}
u_t+a^{ij}u_{x^ix^j}=f^i_{x^i}+g  \label{backward sys}
\end{eqnarray}
on $\cQ_r(a)\subset \Omega$. Then
\begin{eqnarray}
\int_{\cQ_r(a)}\left|u(t,x)-u_{Q_r(a)}\right|^p\mu(dtdx)\le N r^p
\int_{\cQ_r(a)}(|u_x(t,x)|^p+|f(t,x)|^p+r^p|g(t,x)|^p)\mu(dtdx),\label{2010.03.23.5}
\end{eqnarray}
where   $N=N(\theta,p,d,\delta,K)$.
\end{lemma}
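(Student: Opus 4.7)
The strategy is the classical parabolic Poincar\'e split, adapted to the $\nu$-weighted setting. Since $\int_{\cQ_r(a)}|u - u_{\cQ_r(a)}|^p \mu \le N_p \int_{\cQ_r(a)}|u - c|^p \mu$ for any constant $c$ (minimizing property of the mean plus Jensen), it suffices to prove the inequality with $u_{\cQ_r(a)}$ replaced by the time mean $\bar u := r^{-2}\int_0^{r^2}\tilde u(t)\,dt$ of a $\nu$-weighted spatial average $\tilde u(t) := \int u(t,x)\phi(x)\,\nu(dx)$ built from a well-chosen test function $\phi$. I would split $u - \bar u = (u - \tilde u(t)) + (\tilde u(t) - \bar u)$ and estimate the two pieces separately.

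The test function is $\phi(x) := \zeta(x^1)\eta(x')$, where $\zeta$ is the cutoff of Lemma~\ref{2010.03.23.2} and $\eta \in C^{\infty}_0(B'_r(0))$ is a standard nonnegative bump with $\int \eta\,dx' = 1$ and $|\eta_{x^j}| \le N r^{-d}$; then $\phi\,\nu$ is a probability measure on $\cB_r(a)$. The crucial point is that the explicit form $\zeta(x^1) = r^{-1}(x^1)^{-\alpha}\psi((x^1-a)/r)$ in Lemma~\ref{2010.03.23.2} absorbs the $(x^1)^{\alpha}$ weight, so that $\Phi(x) := \phi(x)(x^1)^{\alpha} = \tfrac{1}{r}\psi((x^1-a)/r)\eta(x')$ is an ordinary smooth bump. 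Using $(x^1)^{\alpha}\sim a^{\alpha}$ on $\text{supp}(\Phi)$ and $\nu(\cB_r(a)) \le N a^{\alpha}r^d$ (both valid since $\cB_r(a) \subset \bR^d_+$ forces $a > r$, and $\alpha = \theta-d+p > p-1 > 0$ supplies the positivity required by Lemma~\ref{2010.03.23.2}), one obtains the uniform bounds
$$
|\Phi(x)| \le \frac{N\,(x^1)^{\alpha}}{\nu(\cB_r(a))}, \qquad |\Phi_{x^j}(x)| \le \frac{N\,(x^1)^{\alpha}}{r\,\nu(\cB_r(a))}, \quad j=1,\ldots,d.
$$

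With $\phi$ in hand, the spatial piece is handled by Jensen (applied to the probability measure $\phi\,\nu$) and Lemma~\ref{2010.03.23.3}:
$$
\int_{\cB_r(a)}|u - \tilde u(t)|^p\nu(dx) \le \frac{N}{\nu(\cB_r(a))}\iint_{\cB_r(a)^2}|u(t,x) - u(t,y)|^p \nu(dx)\nu(dy) \le Nr^p\int_{\cB_r(a)}|u_x|^p\nu(dx).
$$
For the time piece, I would differentiate $\tilde u$ under the integral, substitute $u_t = -a^{ij}u_{x^ix^j} + f^i_{x^i} + g$, and integrate by parts in $x$ (legitimate because $\Phi$ is compactly supported and the integrands are smooth) to obtain
$$
\tilde u'(t) = \int a^{ij}u_{x^i}\Phi_{x^j}\,dx - \int f^i\Phi_{x^i}\,dx + \int g\,\Phi\,dx.
$$
The bounds on $|\Phi|, |\Phi_x|$ combined with Jensen give $|\tilde u'(t)|^p \le \frac{N}{\nu(\cB_r(a))}\int_{\cB_r(a)}\!\bigl(r^{-p}(|u_x|^p + |f|^p) + |g|^p\bigr)\nu(dx)$; this together with $|\tilde u(t) - \bar u| \le \int_0^{r^2}|\tilde u'(\tau)|\,d\tau$, H\"older in $\tau$, and integration in $t$ yields the claimed contribution $Nr^p\int_{\cQ_r(a)}(|u_x|^p + |f|^p + r^p|g|^p)\mu$. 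The main obstacle is precisely the weight interplay in $\partial_{x^j}(\phi(x^1)^{\alpha})$: a naive product rule produces an $(x^1)^{\alpha-1}$ term that would spoil scaling, and the special form of $\zeta$ from Lemma~\ref{2010.03.23.2} is precisely what kills this term.
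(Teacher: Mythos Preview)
Your proposal is correct and follows essentially the same approach as the paper: the same test function $\phi=\zeta\eta$ from Lemma~\ref{2010.03.23.2}, the same weighted Poincar\'e inequality (Lemma~\ref{2010.03.23.3}) for the spatial piece, and the same substitution of the equation followed by integration by parts for the time piece. Your packaging of the integration by parts is in fact slightly cleaner than the paper's: by writing $\tilde u(t)=\int u\,\Phi\,dx$ with $\Phi=\phi\,(x^1)^{\alpha}=r^{-1}\psi((x^1-a)/r)\eta(x')$ and differentiating against Lebesgue measure, you avoid the extra $(x^1)^{-1}$ term (the paper's $I_2$) that arises when one integrates by parts directly in the weighted measure and has to differentiate $(x^1)^{\alpha}$.
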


\begin{proof}
We follow the outline
for the proof of Theorem 4.2.1 in \cite{kr08}.
We take the  function $\zeta$ corresponding to $\cB^1_r(a)$ and $\alpha$(:=$\theta-d+p$) from
Lemma \ref{2010.03.23.2}, and take a nonnegative function $\phi=\phi(x')\in C^{\infty}_0(B'_1(0))$ with unit integral. Denote $\eta(x')=r^{-d+1}\phi(x'/r)$, $\cB_r(a)=(a-r,a+r)\times B'_r(0)$ as before, and for $t\in (0,r^2)$ set
\begin{eqnarray}
\bar u(t):=\int_{\cB_r(a)}\zeta(y^1)\eta(y')u(t,y)\nu(dy).\nonumber
\end{eqnarray}
Then by Jensen's inequality and Poincar\'e's
inequality (Lemma \ref{2010.03.23.3}),
\begin{eqnarray}
&&\int_{\cB_r(a)}|u(t,x)-\bar u(t)|^p\nu(dx)\nonumber\\
&=&\int_{\cB_r(a)}\Big|\int_{\cB_r(a)}(u(t,x)-u(t,y))\zeta(y^1)\eta(y')\nu(dy)\Big|^p\nu(dx)\nonumber\\
&\le&
\int_{\cB_r(a)}\left(\int_{\cB_r(a)}|u(t,x)-u(t,y)|^p\zeta(y^1)\eta(y')\nu(dy)\right)\nu(dx)\nonumber\\
&\le&
|\sup\;\zeta|\cdot |\sup\,\eta|\,\int_{\cB_r(a)}\int_{\cB_r(a)}|u(t,x)-u(t,y)|^p\nu(dx)\nu(dy)\nonumber\\
&\le& N r^{-d+1}|\sup\;\zeta| \cdot \nu(\cB_r(a))\;r^p \int_{\cB_r(a)}|u_x(x)|^p \nu(dx)\nonumber\\
&\le& N r^{-d+1}|\sup\;\zeta| \cdot \nu^1((a-r,a+r))\; r^{d-1}r^p \int_{\cB_r(a)}|u_x(x)|^p \nu(dx)\nonumber\\
&\leq &N\;r^p\int_{\cB_r(a)}|u_x(x)|^p
\nu(dx). \label{2010.03.23.4}
\end{eqnarray}

\noindent
We observe that for any constant vector $c\in \mathbb{R}$ the
left-hand side of (\ref{2010.03.23.5}) is less than $2\cdot 2^p$
times
\begin{eqnarray}
                   \label{eqn 12.12.1}
\int_{\cQ_r(a)}|u(t,x)-c|^p\mu(dtdx)\le 2^p \int_{\cQ_r(a)}|u(t,x)-\bar
u(t)|^p\mu(dtdx)+2^p\;\nu(\cB_r(a))\int^{r^2}_{0}|\bar
u(t)-c|^pdt.
\end{eqnarray}
By (\ref{2010.03.23.4}) the first term  of the right side of (\ref{eqn 12.12.1}) is less than
(\ref{2010.03.23.5}). To estimate the second term, we take
$c=\frac{1}{r^2}\int^{r^2}_{0} \bar u(t)dt$.
Then by Poincar\'e's inequality without a weight in variable $t$ we
have
\begin{eqnarray}
\nu(\cB_r(a))\int^{r^2}_{0}|\bar u(t)-c|^pdt
\le N\;\nu(\cB_r(a))\;(r^2)^p
\int^{r^2}_{0}\Big|\int_{\cB_r(a)}\zeta(x^1)\eta(x')u_t(t,x)\nu(dx)\Big|^pdt. \label{2010.03.24.1}
\end{eqnarray}

\noindent
To estimate
the right side of (\ref{2010.03.24.1}), we recall  $u_t=-a^{ij}(t)u_{x^ix^j}+f^i_{x^i}+g$.  First, to handle the integral with $g$,  we use Jensen's inequality, take the supremum out of the integral to get
\begin{eqnarray}
&&\nu(\cB_r(a))\;r^{2p}
\int^{r^2}_{0}\Big|\int_{\cB_r(a)}\zeta(x^1)\eta(x')g(t,x)\nu(dx)\Big|^pdt\nonumber\\
&\le&\nu(\cB_r(a))\; r^{2p}\;|\sup\;\zeta|\,|\sup \eta|\,
\int^{r^2}_{0}\int_{\cB_r(a)}|g(t,x)|^p\nu(dx)dt\nonumber\\
&\le&N \nu^1((a-r,a+r))r^{d-1}\; r^{2p}\;|\sup\;\zeta|\,r^{-d+1}
\int^{r^2}_{0}\int_{\cB_r(a)}|g(t,x)|^p\nu(dx)dt\nonumber\\
&\le& N(\theta,p,d)\;r^{2p}\int_{\cQ_r(a)}|g(t,x)|^p\mu(dtdx),\nonumber
\end{eqnarray}
where we used $|\sup \zeta|\;\nu^1((a-r,a+r))\leq N$ (see Lemma \ref{2010.03.23.2}).

Next, we handle
 the integral with $-a^{ij}u_{x^ix^j}$. Fix  $i,j$.
 Firstly, assume either $i$ or $j$ is $1$; say $j=1$. We use integration by parts
and observe
\begin{eqnarray}
&&\nu(\cB_r(a))\;(r^2)^p
\int^{r^2}_{0}\Big|\int_{\cB_r(a)}\zeta(x^1)\eta(x')a^{ij}(t)u_{x^ix^j}(t,x)\nu(dx)\Big|^pdt\nonumber\\
&\le&\nu(\cB_r(a))\;r^{2p}\int^{r^2}_{0}\Big|\int_{\cB_r(a)}\zeta_{x^1}(x^1)\eta(x')a^{ij}(t)u_{x^i}(t,x)
\nu(dx)\Big|^pdt\nonumber\\
&&\quad+\nu(\cB_r(a))\;r^{2p}(\alpha-1)^p\int^{r^2}_{0}\Big|\int_{\cB_r(a)}\frac{1}{x^1}\zeta(x^1)\eta(x')a^{ij}(t)u_{x^i}(t,x)
\nu(dx)\Big|^pdt\nonumber\\
&=:& \quad I_1+I_2.\nonumber
\end{eqnarray}
For $I_2$ we use the fact $|a^{ij}u_{x^i}|\le |a^{ij}||u_{x^i}|\le K|u_x|$ and $1/{x^1}\leq 2/r$ on the support of $\zeta$.  The argument handling
the case of $g$ easily shows
\begin{eqnarray}
I_2\le N(K,\theta,p,d)\;r^{p}\int_{\cQ_r(a)}|u_x(t,x)|^p\mu(dtdx).\nonumber
\end{eqnarray}
For $I_1$ we use H\"older's inequality and get
\begin{eqnarray*}
\nu(\cB_r(a))\cdot|\int_{\cB_r(a)} \zeta_{x^1}\eta a^{ij}u_{x^i} \;d\nu|^p&\leq& \nu(\cB_r(a))^{p}\int_{\cB_r(a)}|\zeta_{x^1} \eta a^{ij}u_{x^i}|^p \;d\nu\\
&\leq& N(\nu^1((a-r,a+r))^p r^{(d-1)p}\cdot|\sup \zeta_{x^1}|^p r^{(-d+1)p}\int_{\cB_r(a)}|u_x|^p\nu(dx).
\end{eqnarray*}
Since $\nu^1((a-r,a+r))\cdot |\sup \zeta_x|\leq N/r$, it easily follows that
\begin{eqnarray}
I_1\le N(K,\theta,p,d)\;r^{p}\int_{\cQ_r(a)}|u_x(t,x)|^p\mu(dtdx).\nonumber
\end{eqnarray}
Secondly, if $i,j\neq 1$,  by integration by parts, H\"older's inequality and the inequality $\sup|\eta_{x'}|\leq N r^{-d}$,
\begin{eqnarray*}
&&\nu(\cB_r(a))\;r^{2p}\int^{r^2}_{0}\Big|\int_{\cB_r(a)}\zeta(x^1)\eta(x')\left[-a^{ij}(t)u_{x^ix^j}(t,x)\right]\nu(dx)\Big|^pdt\nonumber\\
&=&
\nu(\cB_r(a))\;r^{2p}\int^{r^2}_{0}\Big|\int_{\cB_r(a)}\zeta(x^1)\eta_{x^j}(x')a^{ij}(t)u_{x^i}(t,x) \nu(dx)\Big|^pdt\nonumber\\
&\leq&
\nu(\cB_r(a))^p\;r^{2p}\int^{r^2}_{0}\int_{\cB_r(a)}\Big|\zeta(x^1)\eta_{x^j}(x')a^{ij}(t)u_{x^i}(t,x)\Big|^p \nu(dx)\;dt\nonumber\\
&\leq&
N\nu(\cB_r(a))^p\;r^{2p}\cdot\sup |\zeta|^p\cdot r^{-dp}\int^{r^2}_{0}\int_{\cB_r(a)}|u_{x}|^p \nu(dx)dt\nonumber\\
&\leq&
N r^{p}\int_{\cQ_r(a)}|u_{x}|^p \mu(dxdt).
\end{eqnarray*}

For the integral with $f^i_{x^i}$ we use similar calculation to the one used to handle the term $-a^{ij}u_{x^ix^j}$, and get for each $i$
\begin{eqnarray}
&&\nu(\cB_r(a))\,r^{2p}\int^{r^2}_{0}\Big|\int_{\cB_r(a)}\zeta(x^1) \eta(x')f_{x^i}(t,x)\nu(dx)\Big|^pdt\nonumber\\
&\le& N \,r^{p}\int_{\cQ_r(a)}|f(t,x)|^p\mu(dtdx).\nonumber
\end{eqnarray}
Hence, the lemma is proved.
\end{proof}


\begin{lemma}(\cite{KL11})
                                   \label{key lemma 3}
Let $p\in [1,\infty)$,  $0<r\leq a$ and $u\in
C^{\infty}_{loc}(\Omega)$.

(i)  There is a constant $N=N(\theta,p,d,\delta,K)$
such that for any $\ell=1,\cdots,d$  we have
\begin{eqnarray}
                                  \label{101004.14.17}
\int_{\cQ_r(a)}\left|u_{x^{\ell}}(t,x)-(u_{x^{\ell}})_{\cQ_r(a)}\right|^p\mu(dtdx)\le N r^p
\int_{\cQ_r(a)}(|u_{xx}(t,x)|^p+|u_t(t,x)|^p)\mu(dtdx).
\end{eqnarray}

(ii) Denote $\kappa_0=\kappa_0(r,a):=(\nu^1((a-r,a+r))^{-1}\cdot \int^{a+r}_{a-r} x^1 \nu^1(dx^1)$. Then
\begin{eqnarray}
&&\int_{\cQ_r(a)}\left|u(t,x)-u_{\cQ_r(a)}+\kappa_0(u_{x^1})_{\cQ_r(a)}-\sum_{i=1}^d x^i(u_{x^i})_{\cQ_r(a)}\right|^p\mu(dtdx)\nonumber\\
&\le& N r^p\int_{\cQ_r(a)}
(|u_{x}(t,x)-(u_x)_{\cQ_r(a)}|^p+r^p|u_t(t,x)|^p+r^p|u_{xx}(t,x)|^p)\mu(dtdx)\nonumber\\
&\le& N
r^{2p}\int_{\cQ_r(a)}(|u_{xx}(t,x)|^p+|u_t(t,x)|^p)\mu(dtdx).  \label{101004.14.22}
\end{eqnarray}
\end{lemma}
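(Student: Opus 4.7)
I would deduce both estimates from Lemma \ref{key lemma 2}, applied to two different auxiliary functions built from $u$; the fact that no PDE is assumed on $u$ is harmless because Lemma \ref{key lemma 2} already admits a general divergence-form right-hand side $f^i_{x^i}+g$.

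\emph{Part (i).} Set $v:=u_{x^\ell}$, and let $h:=u_t+a^{ij}u_{x^ix^j}$ (which is simply a smooth function, since $u\in C^{\infty}_{loc}$). Differentiating the tautology $u_t+a^{ij}u_{x^ix^j}=h$ in $x^\ell$ gives
\[
 v_t+a^{ij}v_{x^ix^j}=h_{x^\ell},
\]
which puts $v$ in the framework of Lemma \ref{key lemma 2} with $f^\ell=h$, $f^i=0$ for $i\ne\ell$, and $g\equiv 0$. Applying Lemma \ref{key lemma 2} to $v$ on $\cQ_r(a)$ and using $|a^{ij}|\le K$ (Assumption \ref{main assumption2}) to dominate $|h|\le |u_t|+Kd^{2}|u_{xx}|$ yields (\ref{101004.14.17}).

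\emph{Part (ii).} Introduce the linear correction
\[
 w(t,x):=u(t,x)-\sum_{i=1}^d x^i\,(u_{x^i})_{\cQ_r(a)}.
\]
A direct moment computation, using $\mu(dtdx)=dt\,\nu^1(dx^1)\,dx'$ and the symmetry of $B'_r(0)$ about the origin in $\bR^{d-1}$, shows $(x^i)_{\cQ_r(a)}=0$ for $i=2,\ldots,d$ and $(x^1)_{\cQ_r(a)}=\kappa_0$. Consequently
\[
 w_{\cQ_r(a)}=u_{\cQ_r(a)}-\kappa_0\,(u_{x^1})_{\cQ_r(a)},
\]
so that $w-w_{\cQ_r(a)}$ is exactly the quantity appearing inside the absolute value on the left of (\ref{101004.14.22}). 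Since $w_{x^i}=u_{x^i}-(u_{x^i})_{\cQ_r(a)}$, $w_{x^ix^j}=u_{x^ix^j}$ and $w_t=u_t$, the function $w$ satisfies
\[
 w_t+a^{ij}w_{x^ix^j}=u_t+a^{ij}u_{x^ix^j},
\]
which is of the form $f^i_{x^i}+g$ with all $f^i=0$ and $g:=u_t+a^{ij}u_{x^ix^j}$. Applying Lemma \ref{key lemma 2} to $w$ and using $|a^{ij}|\le K$ on $g$ produces the first inequality of (\ref{101004.14.22}); substituting (\ref{101004.14.17}) into the $|u_x-(u_x)_{\cQ_r(a)}|^p$-term then delivers the second.

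\emph{Main obstacle.} There is essentially no substantive obstacle; the only nontrivial point is the moment identification $(x^1)_{\cQ_r(a)}=\kappa_0$, which requires recognising that averaging against $\mu$ over $\cQ_r(a)$ factors into an average in $t$, a $\nu^1$-weighted average in $x^1$ over $(a-r,a+r)$, and an unweighted average in $x'$ over $B'_r(0)$. Once this is noted, everything reduces to a mechanical invocation of Lemma \ref{key lemma 2}.
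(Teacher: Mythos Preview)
Your proposal is correct and matches the paper's argument essentially verbatim: for (i) the paper also applies Lemma~\ref{key lemma 2} to $v=u_{x^\ell}$ with right-hand side $h_{x^\ell}$ where $h=u_t+a^{ij}u_{x^ix^j}$, and for (ii) it applies Lemma~\ref{key lemma 2} to the affine correction of $u$ (the paper's $v$ is just your $w-w_{\cQ_r(a)}$), then feeds (i) into the resulting gradient term. The only cosmetic difference is that the paper builds the constants $-u_{\cQ_r(a)}+\kappa_0(u_{x^1})_{\cQ_r(a)}$ directly into its auxiliary function and checks it has zero mean, whereas you compute $w_{\cQ_r(a)}$ afterwards---same computation, same result.
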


\begin{proof}
(i) For (\ref{101004.14.17}) we use that fact that  $v:=u_{x^{\ell}}$ satisfies $v_t-a^{ij}v_{x^ix^j}=(f^i)_{x^i}$,  where $f^i=\delta^{i \ell}(u_t-a^{jm}u_{x^jx^m})$, and apply Lemma \ref{key lemma 2}.

(ii) To prove (\ref{101004.14.22}), denote
$v(t,x):=u(t,x)-(u)_{\cQ_r(a)}+\kappa_0(u_{x^1})_{Q_r(a)}-\sum_i x^i(u_{x^i})_{\cQ_r(a)}$.  Then
$$
v_{\cQ_r(a)}=\kappa_0(u_{x^1})_{\cQ_r(a)}-\sum_i\frac{(u_{x^i})_{\cQ_r(a)}}{\mu(\cQ_r(a))}\int_{\cQ_r(a)} x^i \nu(dx)dt=0,
$$
$$
 v-v_{\cQ_r(a)}=v, \quad v_{x^i}=u_{x^i}-(u_{x^i})_{\cQ_r(a)}, \quad v_t-a^{ij}v_{x^ix^j}=g:=u_t-a^{ij}u_{x^ix^j}.
 $$
  Now it is enough to use Lemma \ref{key lemma 2} and (\ref{101004.14.17}). The lemma is proved.
\end{proof}


\begin{theorem}\label{101004.16.53}
Let $\theta\in (d-1, d-1+p)$,  $0<r\leq a$ and $\nu r/a\geq 2$.  Assume that $u\in
C^{\infty}_{loc}(\Omega)$ satisfies $u_t+a^{ij}(t)u_{x^ix^j}=0$ in $\cQ_{\nu
r}(t_0,a,x'_0)\cap\Omega$. Then there is a constant $N=N(K,\delta,\theta,p,d)$
so that
\begin{eqnarray}
&& \aint_{\cQ_r(t_0,a,x'_0)}|u_{xx}(t,x)-(u_{xx})_{\cQ_r(t_0,a,x'_0)}|^p \mu(dt dx) \nonumber\\
&\leq&
\frac{N}{(1+\nu r/a)^p}\;\aint_{\cQ_{\nu r}(t_0,a,x'_0)\cap
\Omega}|u_{xx}(t,x)|^p \mu(dt dx).\label{eqn 2.21.2011}
\end{eqnarray}
\end{theorem}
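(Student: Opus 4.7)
The plan is to split the oscillation estimate into two blocks: a Poincar\'e-type inequality trading the oscillation of $u_{xx}$ on $\cQ_r(t_0,a,x_0')$ for an $L_p$ norm of $u_{xxx}$ there, followed by an interior estimate (via Lemma \ref{101004.14.53} after rescaling) trading that norm for an $L_p$ norm of $u_{xx}$ on a cube that still fits inside $\cQ_{\nu r}(t_0,a,x_0')\cap\Omega$. The required decay factor $(1+\nu r/a)^{-p}$ then emerges from volume and exponent bookkeeping.

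By translation in $t$ and $x'$ I may assume $t_0=0$ and $x_0'=0$. Write $R:=\nu r$ and introduce the intermediate radii $s:=(a+r)/2$ and $q:=(a+R)/2$. The hypotheses $r\le a$ and $R\ge 2a$ give the nested inclusions
\[
\cQ_r(a)\subset\cQ_s(s)\subset\cQ_q(q)\subset\cQ_R(a)\cap\Omega.
\]
Since the coefficients depend only on $t$, for every pair $(i,j)$ the function $v:=u_{x^ix^j}$ again solves the homogeneous equation, so Lemma \ref{key lemma 2} applied to $v$ with $f\equiv g\equiv 0$ yields
\[
\aint_{\cQ_r(a)}|u_{xx}-(u_{xx})_{\cQ_r(a)}|^p\,\mu(dtdx)
\le N r^p\aint_{\cQ_r(a)}|u_{xxx}|^p\,\mu(dtdx)
\le N r^p\max_{\cQ_r(a)}|u_{xxx}|^p.
\]

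To control $\max_{\cQ_r(a)}|u_{xxx}|$ I would set $c:=(u_x)_{\cQ_q(q)}$ and work with $w:=u_x-c$, which still solves the homogeneous equation and satisfies $w_{xx}=u_{xxx}$. The main obstacle is that the constant in Lemma \ref{101004.14.53} has an opaque dependence on its pair of radii; to tame this I rescale to unit size by setting $\tilde w(t,y):=w(q^2 t,qy)$. Then $\tilde w\in C^\infty_{loc}(\Omega)$ solves $\tilde w_t+a^{ij}(q^2 t)\tilde w_{y^iy^j}=0$ on $\cQ_1(1)$, with the same ellipticity constants. Apply Lemma \ref{101004.14.53} with $\beta=0$ and $(s,r)$ there replaced by $(\tilde s,1)$, where $\tilde s:=s/q=(a+r)/(a+R)\le 2/3$; since $1-\tilde s\ge 1/3$ the constant collapses to one depending only on $\theta,p,\delta,K$. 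Undoing the scaling, using $\tilde w_{y^iy^j}=q^2 u_{x^ix^jx^k}$-type identities and the Jacobian $(y^1)^{\theta-d+p}\,dy\,ds=q^{-\theta-2-p}(x^1)^{\theta-d+p}\,dx\,dt$, produces
\[
\max_{\cQ_s(s)}|u_{xxx}|^p \le N\,q^{-\theta-2-3p}\int_{\cQ_q(q)}|u_x-c|^p\,\mu(dtdx).
\]
Lemma \ref{key lemma 3}(i) applied to $u$ on $\cQ_q(q)$, combined with the pointwise bound $|u_t|\le dK|u_{xx}|$ coming from the equation, gives $\int_{\cQ_q(q)}|u_x-c|^p\,\mu\le Nq^p\int_{\cQ_q(q)}|u_{xx}|^p\,\mu$; enlarging the domain of integration to $\cQ_R(a)\cap\Omega$ then yields
\[
\max_{\cQ_r(a)}|u_{xxx}|^p \le N\,q^{-\theta-2-2p}\int_{\cQ_R(a)\cap\Omega}|u_{xx}|^p\,\mu(dtdx).
\]

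Combining with the first display,
\[
\aint_{\cQ_r(a)}|u_{xx}-(u_{xx})_{\cQ_r(a)}|^p\,\mu
\le N r^p q^{-\theta-2-2p}\,\mu(\cQ_R(a)\cap\Omega)\,\aint_{\cQ_R(a)\cap\Omega}|u_{xx}|^p\,\mu.
\]
With $\alpha=\theta-d+p$, a direct computation gives $\mu(\cQ_R(a)\cap\Omega)\sim R^{d+1}(a+R)^{\alpha+1}\sim R^{\theta+2+p}$ when $R\ge 2a$, and $q\sim R$ as well. The composite coefficient therefore collapses to $N(r/R)^p$; since $r\le a$ and $R\ge 2a$, $(r/R)^p\le(a/R)^p\le N(1+R/a)^{-p}=N(1+\nu r/a)^{-p}$, which is the asserted decay. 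The rescaling step above is essential to the whole argument: a direct application of Lemma \ref{101004.14.53} on the original cube $\cQ_q(q)$ would leave a constant growing with $q$ (and hence with $R/a$), killing the decay, whereas rescaling confines $\tilde s$ to the compact set $(0,2/3]$ and freezes the constant.
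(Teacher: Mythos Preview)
Your proof is correct and follows essentially the same approach as the paper: rescale so that Lemma~\ref{101004.14.53} can be applied on a fixed pair of cubes (so its constant freezes), feed in a function whose second derivatives are $u_{xxx}$, and use Lemma~\ref{key lemma 3} to pass back to $|u_{xx}|^p$, with the decay $(1+\nu r/a)^{-p}$ emerging from the volume bookkeeping. The only organizational differences are that the paper first normalizes to $a=1$ by a separate dilation (then rescales by $\beta=(1+\nu r)/2$), bounds the oscillation on $\cQ_r(1)$ directly by $\sup(|u_{xxx}|^p+|u_{xxt}|^p)$ rather than via Lemma~\ref{key lemma 2}, and subtracts an affine function from $u$ (invoking Lemma~\ref{key lemma 3}(ii)) instead of a constant from $u_x$ (your use of Lemma~\ref{key lemma 3}(i)); these are equivalent rearrangements of the same mechanism.
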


\begin{proof}
Considering a proper translation, without loss of generality, we assume that  $t_0=0$, $x'_0=0$ and thus $\cQ_r(t_0,a,x'_0)=\cQ_r(a)$.

{\bf{Step 1}}. First, we consider the case $a=1$.  Obviously,
$$
r\leq 1, \quad 2\leq \nu r, \quad \beta:=\frac{1+\nu r}{2} \leq \nu r, \quad \frac{r}{\beta}\leq \frac{1}{\beta}\leq \frac{2}{3}, \quad \quad 2\beta =1+\nu r.
$$
Thus,
$$
 \cQ_{\beta}(\beta) \subset \cQ_{\nu r}(1)\cap \Omega, \quad \cQ_{r/\beta}(\beta^{-1}) \subset \cQ_{2/3}(2/3).
 $$
 Denote $w(t,x)=u(\beta^2t,\beta x)$, then
 obviously
 $$
w_t+a^{ij}(\beta^2t)w_{x^ix^j}=0, \quad \quad \text{for}\quad (t,x)\in \cQ_1(1)
$$
and
\begin{eqnarray*}
\aint_{\cQ_r(1)}|u_{xx}(t,x)-(u_{xx})_{\cQ_r(1)}|^p \mu(dtdx)& \leq& N(d)\sup_{\cQ_r(1)}(|u_{xxx}|^p+|u_{xxt}|^p)\\
&\leq& N(d)\beta^{-3p} \,\,\sup_{\cQ_{r/\beta}(\beta^{-1})}(|w_{xxx}|^p+|w_{xxt}|^p)\\
&\leq& N(d)\beta^{-3p} \,\,\sup_{\cQ_{2/3}(2/3)} (|w_{xxx}|^p+|w_{xxt}|^p).
\end{eqnarray*}
Applying  Lemma \ref{101004.14.53} to
$v(t,x)=w(t,x)-w_{\cQ_1(1)}+\kappa_0(w_{x^1})_{\cQ_1(1)}-\sum_{i=1}^d x^i(w_{x^i})_{\cQ_1(1)}$, and then using   Lemma \ref{key lemma 3}
\begin{eqnarray*}
\beta^{-3p} \,\,\sup_{\cQ_{2/3}(2/3)} (|w_{xxx}|^p+|w_{xxt}|^p) &\leq& N\beta^{-3p}\int_{\cQ_1(1)}|v|^p \mu(dtdx)\\
&\leq& N\beta^{-3p}\int_{\cQ_1(1)}|w_{xx}|^p \mu(dtdx)\\
&=&N\beta^{-2p-2-\theta}\int_{\cQ_{\beta}(\beta)}|u_{xx}|^p \mu(dtdx).
\end{eqnarray*}
This leads to (\ref{eqn 2.21.2011}) since $|\cQ_{\nu r}(1)\cap \Omega|\sim \beta^{p+\theta+2}$.

{\bf{Step 2}}. Let $a\neq 1$. Define $v(t,x):=u(a^2t,ax)$. Then $v_t+a^{ij}(a^2t)v_{x^ix^j}=0$ in $\cQ_{\nu
r/a}(1)\cap\Omega$. It is easy to check
$$
\mu(\cQ_{r/a}(1))=a^{-\theta-p-2}\mu(\cQ_r(a)), \quad (v_{xx})_{\cQ_{r/a}(1)}=a^2(u_{xx})_{\cQ_r(a)}, \quad \mu(\cQ_{\nu r/a}(1)\cap \Omega)=a^{-\theta-p-2}\mu(
\cQ_{\nu r}(a)\cap \Omega),
$$
and consequently
$$
\aint_{\cQ_{r/a}(1)}|v_{xx}(t,x)-(v_{xx})_{\cQ_{r/a}(1)}|^p\mu(dtdx)=a^{2p}\aint_{\cQ_{r}(a)}|u_{xx}(t,x)-(u_{xx})_{\cQ_{r}(a)}|^p\mu(dtdx),
$$
$$
\aint_{\cQ_{\nu r/a}(1)\cap
\Omega}|v_{xx}(t,x)|^p\mu(dtdx)=a^{2p}\aint_{\cQ_{\nu r}(a)\cap
\Omega}|u_{xx}(t,x)|^p\mu(dtdx).
$$
It follows that
\begin{eqnarray*}
\aint_{\cQ_{r}(a)}|u_{xx}(t,x)-(u_{xx})_{\cQ_{r}(a)}|^p\mu(dtdx)&=& a^{-2p}\aint_{\cQ_{\nu r/a}(1)\cap
\Omega}|v_{xx}(t,x)|^p\mu(dtdx)\\
&\leq& a^{-2p} \frac{N}{(1+\nu r/a)^p}\;\aint_{\cQ_{\nu r/a}(1)\cap
\Omega}|v_{xx}(t,x)|^p\mu(dtdx)\\
&=&\frac{N}{(1+\nu r/a)^p}\;\aint_{\cQ_{\nu r}(a)\cap
\Omega}|u_{xx}(t,x)|^p\mu(dtdx).
\end{eqnarray*}
The theorem is proved.

\end{proof}

The following is the main result of this section. Recall that $\theta<d-1+p$. Thus for  $q$ sufficiently close to $p$, we have $\theta+p-q<d-1+q$.

\begin{theorem}
                                  \label{theorem 5.2.1}
Let $\theta\in (d-1, d-1+p)$,  $0<r\leq a$ and $p,q\in (1,\infty)$ so that
\begin{equation}
                            \label{eqn 5.02.2}
q\leq p,\quad \quad \theta':=\theta+p-q<d-1+q.
\end{equation}
 Also let  $\nu\ge 2$,  $r\nu\geq a$ and   $u\in
C^{\infty}(\Omega)$. Then,
\begin{eqnarray*}
&&\aint_{\cQ_r(t_0,a,x_0)}|u_{xx}(t,x)-(u_{xx})_{\cQ_r(t_0,a,x_0)}|^q \mu(dtdx)\\
&\le& N
\frac{1}{(1+\nu r/a)^q}\aint_{\cQ_{\nu r}(t_0,a,x'_0)\cap
\Omega}|u_{xx}(t,x)|^q\mu(dtdx) \\
&+&N \frac{\nu^{d+1}}{r/a}(1+\nu r/a)^{p+\theta-d+1}\aint_{\cQ_{\nu r}(t_0,a,x'_0)\cap
\Omega}|u_t+a^{ij}u_{x^ix^j}|^q\mu(dtdx),
\end{eqnarray*}
where $N=N(K,\delta,\theta,p,q)$.

\end{theorem}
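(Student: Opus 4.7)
The plan is a Campanato-style splitting. Let $f := u_t + a^{ij} u_{x^ix^j}$ and $Q := \cQ_{\nu r}(t_0, a, x'_0) \cap \Omega$. Decompose $u = v + w$, where $w$ carries the source and $v=u-w$ solves the homogeneous equation on $Q$; then apply Theorem \ref{101004.16.53} to $v$ and Corollary \ref{cor 5.1.1} to $w$. The structural observation that makes this work at the new index $q \le p$ is that, with $\alpha := \theta - d + p$ already fixed, the hypothesis $\theta' = \theta + p - q$ forces $\theta' - d + q = \alpha$. Consequently the weighted measure $\mu(dt\,dx) = (x^1)^{\alpha}dx\,dt$ is identical for the pairs $(p,\theta)$ and $(q,\theta')$, and the output of Corollary \ref{cor 5.1.1} in the $(q,\theta')$ regime, namely $\|Mh\|_{\bL_{q,\theta'}}^q = \int |h|^q \mu(dt\,dx)$, already reads against the weight appearing in the conclusion.

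Concretely, invoking Corollary \ref{cor 5.1.1} with $\gamma=0$ and parameters $(q,\theta')$ (admissible since $d-1<\theta'<d-1+q$) on $(-\infty, t_0+(\nu r)^2)$, one solves
\begin{equation*}
w_t + a^{ij}(t) w_{x^ix^j} = f\,\mathbf{1}_Q, \qquad w\bigl(t_0+(\nu r)^2,\cdot\bigr)=0
\end{equation*}
and obtains $\int_{\bR\times\bR^d_+}|w_{xx}|^q\mu(dt\,dx)\le N\int_Q|f|^q\mu(dt\,dx)$. Setting $v:=u-w$, Theorem \ref{101004.16.53} applied to $v$ with pair $(q,\theta')$ (assume for the moment $\nu r/a\ge 2$) yields
\begin{equation*}
\aint_{\cQ_r(t_0,a,x_0')}|v_{xx}-(v_{xx})_{\cQ_r}|^q\,\mu(dt\,dx) \le \frac{N}{(1+\nu r/a)^q}\aint_Q|v_{xx}|^q\,\mu(dt\,dx).
\end{equation*}
Combining these via $u_{xx}=v_{xx}+w_{xx}$, the triangle inequality on the oscillation, and the crude majorisation $\aint_{\cQ_r}|w_{xx}|^q\le (\mu(Q)/\mu(\cQ_r))\aint_Q|w_{xx}|^q$, one arrives at
\begin{equation*}
\aint_{\cQ_r}|u_{xx}-(u_{xx})_{\cQ_r}|^q\,\mu \le \frac{N}{(1+\nu r/a)^q}\aint_Q|u_{xx}|^q\,\mu + N\,\frac{\mu(Q)}{\mu(\cQ_r)}\aint_Q|f|^q\,\mu.
\end{equation*}
An explicit weighted-volume computation, using $r \le a \le \nu r$, gives $\mu(\cQ_r)\sim r^{d+2}a^{\alpha}$ and $\mu(Q)\sim (\nu r)^{d+1}(a+\nu r)^{\alpha+1}$, hence
\begin{equation*}
\frac{\mu(Q)}{\mu(\cQ_r)} \le N\,\frac{\nu^{d+1}}{r/a}\,(1+\nu r/a)^{\alpha+1} = N\,\frac{\nu^{d+1}}{r/a}\,(1+\nu r/a)^{p+\theta-d+1},
\end{equation*}
which is exactly the form asserted in the theorem.

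The principal technical obstacles are twofold. First, Corollary \ref{cor 5.1.1} delivers $w$ only in the weighted Sobolev scale, while Theorem \ref{101004.16.53} is stated for $C^{\infty}_{loc}(\Omega)$ functions; this is resolved by mollifying $f$ in $(t,x)$, noting that interior parabolic regularity (the coefficients depend only on $t$ and $f$ is smooth on $Q$) makes the corresponding $w$ smooth, and then passing to the limit using the $L^q(\mu)$-continuity of both sides of the estimate in $u_{xx}$. Second, Theorem \ref{101004.16.53} requires $\nu r/a\ge 2$, whereas here we only have $r\nu\ge a$; in the remaining band $1\le \nu r/a<2$ one applies the same splitting with $\nu$ replaced by $\tilde\nu := 2a/r\ge 2$, noting that $\cQ_{\tilde\nu r}(t_0,a,x'_0)\cap\Omega$ is $\mu$-comparable to $Q$ in this bounded regime so that the constant cost is absorbed into $N(\theta,p,q,\delta,K)$.
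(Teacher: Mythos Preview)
Your approach is essentially the paper's: the same Campanato splitting $u=v+w$, the same key identity $\theta'-d+q=\theta-d+p=\alpha$ so that $\mu$ is unchanged, the same application of Corollary~\ref{cor 5.1.1} (at level $(q,\theta')$) to the piece carrying the source and of Theorem~\ref{101004.16.53} to the homogeneous piece, and the same weighted-volume ratio producing the factor $\frac{\nu^{d+1}}{r/a}(1+\nu r/a)^{\alpha+1}$. The paper merely swaps the roles (it defines $v$ via the Cauchy problem and sets $w=u-v$), uses a smooth cutoff $\zeta$ supported in $\cQ_{\nu r}\cap\Omega$ and equal to $1$ on $\cQ_{\nu r/2}\cap\Omega$ in place of your indicator $\mathbf{1}_Q$, and then invokes Theorem~\ref{101004.16.53} with $\nu/2$; the constants are absorbed identically.

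There is, however, one technical slip in your regularity argument. Mollifying $f$ alone does not make $w$ (hence $v=u-w$) smooth in $t$: the coefficients $a^{ij}(t)$ are merely measurable, so interior parabolic regularity gives smoothness only in $x$, while Theorem~\ref{101004.16.53} (through Lemma~\ref{101004.14.53}) is stated for $C^{\infty}_{loc}(\Omega)$ functions and uses $u_t$. The paper's remedy is to approximate $a^{ij}$ by smooth $a^{ij}_n$ first, observing that the left side of the desired estimate is independent of $a$ and the right side is continuous in $a$ by dominated convergence; with smooth coefficients, the solution $v$ produced by Corollary~\ref{cor 5.1.1} lies in $M\bH^n_{q,\theta'}$ for every $n$, hence is smooth in $x$ by~(\ref{eqn 3.31.4}), and then smoothness in $t$ follows from the equation. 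Your use of the sharp cutoff $\mathbf{1}_Q$ instead of a smooth $\zeta$ also means $v$ fails to be globally $C^{\infty}_{loc}(\Omega)$ (only on the interior of $Q$), so strictly speaking you would have to note that the proof of Theorem~\ref{101004.16.53} only uses local smoothness on the large cylinder; the paper's smooth cutoff avoids this digression. Your handling of the band $1\le \nu r/a<2$ by enlarging $\nu$ is fine.
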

\begin{proof}
 As before we may assume that $t_0=0$ and $x'_0=0$. Also we may assume that $a^{ij}(t)$ is infinitely differentiable in $t$ and all the derivatives of $a^{ij}$ are bounded.  Indeed, take a sequence of smooth functions $a_{n}^{ij}$
so that each $a^{ij}_{n}$ satisfies condition (\ref{assumption 1}) and $a^{ij}_n(t)\to a^{ij}(t)$ as $n\to \infty$ (a.e.).  Then it is enough to observe
$$
\aint_{\cQ_{\nu r}(t_0,a,x'_0)\cap
\Omega}|u_t+a_nu_{xx}|^q\mu(dtdx) \,\, \to \,\,\aint_{\cQ_{\nu r}(t_0,a,x'_0)\cap
\Omega}|u_t+au_{xx}|^q\mu(dtdx) \quad \quad \text{as}\quad n\to \infty.
$$
Also note that we may assume that $u(t)$ vanishes for all large $t$, say for all $t\geq T \,(\geq \nu^2 r^2)$.

Take a $\zeta\in C^{\infty}_0(\bR^{d+1})$
so that $\zeta(t,x)=1$ for $(t,x)\in \cQ_{\nu r/2}(a)\cap \Omega$ and  $\zeta(t,x)=0$ if $(t,x)\not\in (-\nu^2r^2,\nu^2r^2)\times (-a,a+\nu r)\times B'_{\nu r}$.  Denote
$$
f=u_t+a^{ij}u_{x^ix^j}, \quad g=f\zeta, \quad h=f(1-\zeta).
$$
By Corollary  \ref{cor 5.1.1} we can define  $v$ as the solution of
\begin{equation}
                       \label{eqn again}
v_t+a^{ij}v_{x^ix^j}=h \quad \text{for}\,\, t\in (-\infty,T), \quad \text{and}\quad  v(T,\cdot)=0
\end{equation}
so that $v\in M\bH^n_{p,\theta}(-\infty,T)$ for any  $n$.  Also let $\bar{v}\in  M\bH^n_{p,\theta}(-\infty,T+1)$ be the solution of
$$
\bar{v}_t+a^{ij}\bar{v}_{x^ix^j}=h \quad \text{for}\,\, \quad t\in (-\infty,T+1), \quad \text{and}\quad  \bar{v}(T+1,\cdot)=0.
$$
Then by considering the equation for $\bar{v}$  on $(T,T+1)$, since $h(t)=0$ for $t\geq T$, we conclude $\bar{v}(t)=0$ for $t\in [T,T+1]$. Thus $\bar{v}$ also satisfies (\ref{eqn again}) and $v=\bar{v}$.
It follows from (\ref{eqn 3.31.4}) that $v$ is infinitely differentiable in $x$ (and hence in $t$), and thus $v\in C^{\infty}_{loc}(\Omega)$.

By (\ref{eqn 5.02.2}),
$$
\theta-d+p=\theta'-d+q, \quad \theta'\in (d-1, d-1+q).
$$
 By applying Theorem \ref{101004.16.53} with $q,\theta'$ and $\nu/2$ in places of $p,\theta$ and $\nu$ respectively,
\begin{eqnarray}
               \aint_{\cQ_r(a)}|v_{xx}(t,x)-(v_{xx})_{\cQ_r(a)}|^q \bar{\mu}(dyds) &\le& N
\frac{1}{(1+\nu r/2a)^q}\aint_{\cQ_{\nu r/2}(a)\cap \Omega}|v_{xx}(t,x)|^q\bar{\mu}(dyds) \nonumber\\
&\le& N
\frac{1}{(1+\nu r/a)^q}\aint_{\cQ_{\nu r}(a)\cap \Omega}|v_{xx}(t,x)|^q\bar{\mu}(dyds),  \label{eqn 6.08.2}
\end{eqnarray}
where $\bar{\mu}(dsdy):=(y^1)^{\theta'-d+q}dyds=\mu(dyds)$.
On the other hand, $w:=u-v$ satisfies
$$
w_t+a^{ij}w_{x^ix^j}=g, \quad t\in (0,T).
$$
and  $w(T,\cdot)=0$.    By Corollary  \ref{cor 5.1.1} (with $q,\theta'$ in place of $p,\theta$ respectively),
$$
\int_{\cQ_r(a)}|w_{yy}|^q (y^1)^{\theta'-d+q}dyds\leq \int_{\cQ_{\nu r}(a)\cap \Omega} |w_{yy}|^q \mu(dsdy) \leq N\int_{\cQ_{\nu r}(a)\cap \Omega}|f|^q\mu(dsdy),
$$
\begin{eqnarray}
\aint_{\cQ_r(a)}|w_{yy}|^q\mu(dyds)&\leq& N\frac{\nu^{d+1}(1+\nu r/a)^{p+\theta-d+1}}{(1+r/a)^{p+\theta-d+1}-(1-r/a)^{p+\theta-d+1}}\aint_{\cQ_{\nu r}(a)\cap \Omega}|f|^q\mu(dyds) \nonumber\\
&\leq &N \frac{a}{r}\nu^{d+1}(1+\nu r/a)^{p+\theta-d+1}\aint_{\cQ_{\nu r}(a)\cap \Omega}|f|^q\mu(dyds), \label{eqn 6.08.1}
\end{eqnarray}
where  inequality (\ref{eqn 6.08.1}) is obtained as follows; since $p+\theta-d+1\geq 1$,
$$
(1+r/a)^{p+\theta-d+1}-(1-r/a)^{p+\theta-d+1}\geq (1+r/a) - (1-r/a)\geq 2r/a.
$$
Observing that $u=v+w$, we get
\begin{eqnarray*}
I:&=&\aint_{\cQ_r(a)}|u_{yy}(t,x)-(u_{yy})_{\cQ_r(a)}|^q \mu(dyds)\\
&\leq& N(q)\aint_{\cQ_r(a)}|w_{yy}(t,x)-(w_{yy})_{\cQ_r(a)}|^q \mu(dyds)+N(q)\aint_{\cQ_r(a)}|v_{yy}(t,x)-(v_{yy})_{\cQ_r(a)}|^q \mu(dyds)\\
&\leq& N(q)\aint_{\cQ_r(a)}|w_{yy}(t,x)|^q \mu(dyds)+N(q)\aint_{\cQ_r(a)}|v_{yy}(t,x)-(v_{yy})_{\cQ_r(a)}|^q \mu(dyds).
\end{eqnarray*}
Thus  by (\ref{eqn 6.08.2}) and (\ref{eqn 6.08.1}),
\begin{eqnarray*}
I  &\leq&N\frac{a}{r}\nu^{d+1}(1+\nu r/a)^{p+\theta-d+1}\aint_{\cQ_{\nu r}(a)\cap \Omega}|f|^q\mu(dyds) + N
\frac{1}{(1+\nu r/a)^q}\aint_{\cQ_{\nu r}(a)\cap \Omega}|v_{yy}(t,x)|^q\mu(dyds)\\
&\leq& N\frac{a}{r}\nu^{d+1}(1+\nu r/a)^{p+\theta}\aint_{(0,\nu^2r^2)\times(0,a+\nu r)}|f|^q\mu(dyds)\\
&&+ N\frac{1}{(1+\nu r/a)^q}\aint_{\cQ_{\nu r}(a)\cap \Omega}\left(|u_{yy}(t,x)|^q+|w_{yy}(t,x)|^q\right)\mu(dyds)\\
&\leq& N\frac{a}{r}\nu^{d+1}(1+\nu r/a)^{p+\theta-d+1}\aint_{\cQ_{\nu r}(a)\cap \Omega}|f|^q\mu(dyds)
+ N\frac{1}{(1+\nu r/a)^q}\aint_{\cQ_{\nu r}(a)\cap \Omega}|u_{yy}(t,x)|^q\mu(dyds).
\end{eqnarray*}
The theorem is proved.
\end{proof}

\mysection{A priori estimate for equations with  BMO coefficients}
                                     \label{section sharp 2}

Recall
that    $\cQ_r(t,x)=(t,t+r^2) \times (x^1-r,x^1+r)\times B'_r(x')$.  For any $d\times d$ matrix $a=(a^{ij}(t,x))$, as in \cite{Kr07}, we   define a standard mean oscillation on $\cQ_r(t,x)=\cQ_r(t,x^1,x')$:
\begin{eqnarray}
                             \label{oscillation}
osc_x(a,\cQ_r(t,x))=\frac{1}{r^2|\cB_r(x)|^2} \int^{t+r^2}_t \left(\int_{\cB_r(x)}\int_{\cB_r(x)} |a(s,y)-a(s,z)|dydz\right)ds,
\end{eqnarray}
where $|\cB_r(x)|$ is the Euclidian volume of $\cB_r(x)$.  We say that {\underline{$a$ is $VMO$}} (see \cite{Kr07} for more details)  if
\begin{equation}
                          \label{eqn 12.12.6}
\lim_{r\to 0} \sup_{\cQ_r(t,x)} osc_x(a,\cQ_r(t,x))=0.
\end{equation}

Now we define a   mean oscillation with respect to measure $\nu(dx)=(x^1)^{\theta-d+p}dx$:
\begin{eqnarray*}
osc^{\theta}_x(a,\cQ_r(t,x^1,x'))&=&r^{-2}\int^{t+r^2}_t \left(\aint_{\cB_r(x)}\aint_{\cB_r(x)} |a(s,y)-a(s,z)|\nu(dy)\nu(dz)\right)ds\\
&=&\frac{1}{r^2(\nu(\cB_r(x)))^2}\int^{t+r^2}_t \left(\int_{\cB_r(x)}\int_{\cB_r(x)} |a(s,y)-a(s,z)|\nu(dy)\nu(dz)\right)ds.
\end{eqnarray*}

\noindent
 Obviously, if $a$ depends only on $t$ then $osc^{\theta}_x(a,\cQ_r(t,x^1,x'))=0$. Also it is easy to check that for any $d\times d$ matrix-valued $\bar{a}(t)$ depending only on $t$,
$$
osc^{\theta}_x(a,\cQ_r(t,x))\leq 2r^{-2}\int^{t+r^2}_t\aint_{\cB_r(x)}|a(s,y)-\bar{a}(s)|\nu(dy)ds.
$$
On the other hand,
$$
r^{-2}\int^{t+r^2}_t\aint_{\cB_r(x)}|a(s,y)-(a)_{\cB_r(x)}(s)|\nu(dy)ds \leq osc^{\theta}_x(a,\cQ_r(t,x)),
$$
where $(a)_{\cB_r(x)}(s)=(\nu(\cB_r(x)))^{-1}\int_{\cB_r(x)} a(s,y)\nu(dy)$.

Roughly speaking, the following result says that the condition $osc^{\theta}_x(a,\cQ_r(t,x^1,x'))\leq \varepsilon$ for some $\varepsilon$ is not
stronger than the condition $osc_x(a,\cQ_r(t,x^1,x'))\leq \varepsilon$.

\begin{lemma}
                                     \label{lemma inverse}
There exists a constant $N=N(\theta)>0$ so that for any $\kappa\in (0,1]$ and $r=\kappa x^1$,
\begin{equation}
                    \label{eqn 5.02.8}
osc^{\theta}_x(a,\cQ_r(t,x^1,x'))\leq N \, \,osc_x(a,\cQ_r(t,x^1,x')),
\end{equation}
\begin{equation}
                       \label{eqn 10.2.11}
  osc_x(a,\cQ_r(t,x^1,x'))\leq N\cdot (1-\kappa)^{-\alpha} \, \,osc^{\theta}_x(a,\cQ_r(t,x^1,x')).
  \end{equation}
  \end{lemma}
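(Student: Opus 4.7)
The plan is to bypass the naive Radon--Nikodym estimate $osc^\theta \le \|\rho\|_\infty^2\,osc$ with $\rho(y)=(y^1)^\alpha|\cB_r|/\nu(\cB_r)$, which would produce a wasteful exponent $2\alpha$ in (\ref{eqn 10.2.11}). Instead, I would insert the triangle inequality at a single scalar mean, so that only one copy of the weight enters each bound.

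\emph{Step 1 (preparatory two-sided estimate for $\nu(\cB_r)/|\cB_r|$).} For $r=\kappa x^1$ with $\kappa\in(0,1]$, one has $y^1\in((1-\kappa)x^1,(1+\kappa)x^1)\subset(0,2x^1]$ on $\cB_r$, so trivially $\nu(\cB_r)/|\cB_r|\le(2x^1)^\alpha$. The matching lower bound exploits the crucial fact $\alpha>0$ (which holds because $\theta>d-1$ and $p>1$ force $\alpha=\theta-d+p>p-1>0$): on the upper half $\{y\in\cB_r:y^1\ge x^1\}$, which has exactly half the Lebesgue measure of $\cB_r$, one has $(y^1)^\alpha\ge(x^1)^\alpha$, giving $\nu(\cB_r)/|\cB_r|\ge(x^1)^\alpha/2$. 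Both bounds are uniform in $\kappa\in(0,1]$.

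\emph{Step 2 (proof of (\ref{eqn 5.02.8})).} With $\bar a(s):=\aint_{\cB_r}a(s,y)\,dy$, triangle inequality, the bound $(y^1)^\alpha\le(2x^1)^\alpha$, and a standard Jensen step $\int_{\cB_r}|a(s,y)-\bar a(s)|\,dy\le|\cB_r|^{-1}\int\!\int|a(s,y)-a(s,z)|\,dy\,dz$ yield
\[
\int_{\cB_r}\!\int_{\cB_r}|a(s,y)-a(s,z)|\nu(dy)\nu(dz)\le \frac{2\nu(\cB_r)(2x^1)^\alpha}{|\cB_r|}\int_{\cB_r}\!\int_{\cB_r}|a(s,y)-a(s,z)|\,dy\,dz.
\]
Integrating in $s$, dividing by $r^2\nu(\cB_r)^2$, and applying the lower bound from Step~1 to absorb the remaining factor $|\cB_r|/\nu(\cB_r)\le 2(x^1)^{-\alpha}$ produces $osc^\theta\le N(\alpha)\,osc$.

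\emph{Step 3 (proof of (\ref{eqn 10.2.11})).} Symmetrically, use the weighted mean $\bar a_\nu(s):=\nu(\cB_r)^{-1}\int_{\cB_r}a(s,y)\,\nu(dy)$. Triangle inequality, the conversion $dy=(y^1)^{-\alpha}\nu(dy)$ with the sharp pointwise bound $(y^1)^{-\alpha}\le((1-\kappa)x^1)^{-\alpha}$---the sole source of the $(1-\kappa)^{-\alpha}$ factor---and a weighted Jensen step $\int_{\cB_r}|a(s,y)-\bar a_\nu(s)|\nu(dy)\le\nu(\cB_r)^{-1}\int\!\int|a(s,y)-a(s,z)|\nu(dy)\nu(dz)$ give
\[
\int_{\cB_r}\!\int_{\cB_r}|a(s,y)-a(s,z)|\,dy\,dz\le \frac{2|\cB_r|((1-\kappa)x^1)^{-\alpha}}{\nu(\cB_r)}\int_{\cB_r}\!\int_{\cB_r}|a(s,y)-a(s,z)|\nu(dy)\nu(dz).
\]
Integrating in $s$, dividing by $r^2|\cB_r|^2$, and applying the upper bound $\nu(\cB_r)/|\cB_r|\le(2x^1)^\alpha$ from Step~1 cancels the lone $(x^1)^{-\alpha}$ and delivers the claimed $(1-\kappa)^{-\alpha}$ (with $N=2^{\alpha+1}$). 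No serious obstacle is anticipated: the whole argument is careful book-keeping of weights, and the single substantive idea---splitting $|a(y)-a(z)|$ via the triangle inequality at one scalar mean, rather than pointwise bounding the double weight $(y^1)^\alpha(z^1)^\alpha$---is exactly what halves the $(1-\kappa)$ exponent from the wasteful $2\alpha$ down to $\alpha$.
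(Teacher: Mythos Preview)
Your proof is correct, and it takes a genuinely different---and in fact sharper---route than the paper. The paper simply bounds the Radon--Nikodym derivative pointwise on \emph{both} factors: for (\ref{eqn 5.02.8}) it uses $\nu(dy)\le (x^1)^{\alpha}(1+\kappa)^{\alpha}\,dy$ twice, together with $|\cB|/\nu(\cB)\le N(\alpha)(x^1)^{-\alpha}$, which is harmless since $(1+\kappa)^{2\alpha}\le 2^{2\alpha}$; for (\ref{eqn 10.2.11}) it merely writes ``note $dy\le \nu(dy)/((x^1)^{\alpha}(1-\kappa)^{\alpha})$'' and stops. Taken literally, applying that bound on both $dy$ and $dz$ only yields $N(1-\kappa)^{-2\alpha}$, not the stated $(1-\kappa)^{-\alpha}$. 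Your device---splitting $|a(s,y)-a(s,z)|$ through a single scalar mean (Lebesgue mean for (\ref{eqn 5.02.8}), weighted mean for (\ref{eqn 10.2.11})) so that the pointwise weight bound enters only once---is precisely what recovers the exponent $\alpha$ claimed in the lemma. The paper never uses (\ref{eqn 10.2.11}) downstream, so the discrepancy in its own proof is harmless for the main results, but your argument is the one that actually matches the statement as written.
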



\begin{proof}
Denote $\alpha:=\theta-d+p>-1$. First note that
$$ \nu(dy)\leq (x^1)^{\alpha}(1+\kappa)^{\alpha}dy \quad \quad \text{on}\quad \cB:=\cB_r(x),
$$
$$
\frac{|\cB|}{\nu(\cB)}=(\alpha+1)(x^1)^{-\alpha}\frac{2\kappa}{\left[(1+\kappa)^{\alpha+1}-(1-\kappa)^{\alpha+1}\right]}\leq N(\alpha)(x^1)^{-\alpha},
$$
where the last inequality is obtained as in (\ref{eqn 6.08.1}).  Thus
\begin{eqnarray*}
osc^{\theta}_x(a,\cQ_r(t,x^1,x'))&=&\frac{|\cB|^2}{(\nu(B))^2} \frac{r^{-2}}{|\cB|^2}\int^{t+r^2}_t\left(\int_{\cB}\int_{\cB} |a(s,y)-a(s,z)|\nu(dy)\nu(dz)\right)ds\\
&\leq&N^2(\alpha)(x^1)^{-2\alpha}\cdot (x^1)^{2\alpha}(1+\kappa)^{2\alpha}\,osc_x(a,\cQ_r(t,x^1,x'))\\
&\leq& N \,osc_x(a,\cQ_r(t,x^1,x')).
\end{eqnarray*}
\noindent
To prove (\ref{eqn 10.2.11}) it is enough to assume $\kappa \in (0,1)$ and note $dy\leq \frac{\nu(dy)}{(x^1)^{\alpha}(1-\kappa)^{\alpha}}$ on $\cB_r(x)$. The lemma is proved.
\end{proof}

\begin{remark}
                   \label{main remark}
In Theorem \ref{main theorem},  the  following condition near  $\partial \bR^d_+$ is assumed:
\begin{equation}
                        \label{12.12.3}
\lim_{x^1 \to 0} \sup_{r \leq \kappa_0 x^1} osc^{\theta}_x(a, \cQ_r(t,x^1,x')) <\varepsilon,
\end{equation}
where $\kappa_0, \varepsilon \in (0,1)$ will be specified later.
To understand (\ref{12.12.3}), let $d=1$  (so that $\bR^d_+=(0,\infty)$) and $a$ be independent of $t$. Then (\ref{12.12.3}) becomes
\begin{equation}
                     \label{eqn 12.12.5}
\lim_{x^1 \to 0} \sup_{r \leq \kappa_0 x^1} osc^{\theta}_x(a, (x^1-r,x^1+r)) <\varepsilon.
\end{equation}
Since $x^1-r \geq (1-\kappa_0)x^1>0$,  there is no requirement that the mean oscillation on a ball containing the boundary points is small.
{\bf{Obviously (\ref{12.12.3}) is satisfied if $a$ is $VMO$}}, since  (cf. (\ref{eqn 5.02.8}))
$$
\lim_{x^1 \to 0} \sup_{r \leq \kappa_0 x^1} osc^{\theta}_x(a, \cQ_r(t,x^1,x'))\leq N\lim_{x^1\to 0} \sup_{r\leq x^1} osc_x (a, \cQ_r(t,x))=0.
$$
\end{remark}


\vspace{4mm}

Note that in the following result $dxdt$ is used in place of $\mu(dxdt)$. However, if  $r/a$ is  small then  $\aint_{\cQ_r(a)} dtdx$ and $\aint_{\cQ_r(a)} \mu(dtdx)$ are comparable.

\begin{lemma}
                      \label{sharp entire}
Let $q>1$ and $a^{ij}=a^{ij}(t)$. Then there exists a constant $N=N(\delta,K,p,d)$ so that for any $\nu\geq 4$, $r>0$ and $u\in C^{\infty}(\Omega)$,
$$
\aint_{\cQ_r(a)}\aint_{\cQ_r(a)}|u_{xx}(t,x)-u_{xx}(s,y)|^q dxdtdyds\leq N\nu^{-q}\aint_{\cQ_{\nu r}(a)}|u_{xx}|^q dxdt +N\nu^{d+2}\aint_{\cQ_{\nu r}(a)}|u_t+a^{ij}u_{x^ix^j}|^q dxdt
$$
\end{lemma}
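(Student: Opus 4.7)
The plan is to follow the standard splitting argument: decompose $u=v+w$ so that $v$ absorbs the inhomogeneity localized in $\cQ_{\nu r}(a)$ and $w$ is homogeneous on the larger cube. Set $f:=u_t+a^{ij}u_{x^ix^j}$. As in the proof of Theorem~\ref{theorem 5.2.1}, I would first reduce to the case that $a^{ij}(t)$ is smooth in $t$ and $u$ vanishes for all large $t$.

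Let $v$ be the solution of
$$
v_t+a^{ij}v_{x^ix^j}=f\chi_{\cQ_{\nu r}(a)}\quad\text{on } \bR\times\bR^d,\qquad v(T,\cdot)=0,
$$
for $T$ sufficiently large. Since $a^{ij}=a^{ij}(t)$, the classical unweighted parabolic $L_q$-estimate (easily obtained by Fourier methods, and the prototype of Corollary~\ref{cor 5.1.1}) gives $\int_{\bR\times\bR^d}|v_{xx}|^q\,dxdt\le N\int_{\cQ_{\nu r}(a)}|f|^q\,dxdt$. Using $|\cQ_{\nu r}(a)|/|\cQ_r(a)|=\nu^{d+2}$,
$$
\aint_{\cQ_r(a)}|v_{xx}|^q\,dxdt\le N\nu^{d+2}\aint_{\cQ_{\nu r}(a)}|f|^q\,dxdt.
$$

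The function $w:=u-v$ satisfies $w_t+a^{ij}w_{x^ix^j}=0$ on $\cQ_{\nu r}(a)$, and since $a^{ij}$ depends only on $t$, all $x$- and $t$-derivatives of $w$ again solve homogeneous equations. An unweighted analogue of Lemma~\ref{101004.14.53}---obtained by rerunning the induction of Lemma~\ref{lemma 3} with Lebesgue measure in place of the weight $(x^1)^{\theta-d}$---yields
$$
\sup_{\cQ_r(a)}\bigl(|w_{xxx}|^q+r^q|w_{xxt}|^q\bigr)\le \frac{N}{(\nu r)^q}\aint_{\cQ_{\nu r/2}(a)}|w_{xx}|^q\,dxdt.
$$
Combining this with the elementary pointwise bound $|w_{xx}(t_1,x_1)-w_{xx}(t_2,x_2)|\le 2r\sup_{\cQ_r(a)}|w_{xxx}|+r^2\sup_{\cQ_r(a)}|w_{xxt}|$ for points in $\cQ_r(a)$, and then using $|w_{xx}|^q\le 2^{q-1}(|u_{xx}|^q+|v_{xx}|^q)$ to reinsert $u$, I would obtain
$$
\aint_{\cQ_r(a)}\aint_{\cQ_r(a)}|w_{xx}(t,x)-w_{xx}(s,y)|^q\,dxdtdyds\le N\nu^{-q}\aint_{\cQ_{\nu r}(a)}|u_{xx}|^q\,dxdt+N\nu^{-q}\aint_{\cQ_{\nu r}(a)}|f|^q\,dxdt.
$$

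Writing $u_{xx}=v_{xx}+w_{xx}$ and expanding via the triangle inequality, one absorbs the $v_{xx}$-oscillation into $2\aint_{\cQ_r(a)}|v_{xx}|^q\,dxdt$, which is already controlled by $N\nu^{d+2}\aint_{\cQ_{\nu r}(a)}|f|^q\,dxdt$. Adding the two contributions gives the claimed inequality. The main obstacle is producing the unweighted interior regularity estimate for $w$, but since $a^{ij}=a^{ij}(t)$ this is a routine Bernstein-type estimate following the pattern of Lemmas~\ref{lemma 3} and~\ref{101004.14.53} with the trivial weight, and presents no genuine novelty.
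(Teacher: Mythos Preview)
Your proposal is correct and follows the standard splitting argument; the paper itself does not prove this lemma but simply cites Theorem~6.1.2 of \cite{kr08}, which uses exactly the decomposition $u=v+w$ that you outline. In that sense you have supplied more than the paper does, and your sketch matches the argument in the cited reference.

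One minor remark: your displayed interior estimate
\[
\sup_{\cQ_r(a)}\bigl(|w_{xxx}|^q+r^q|w_{xxt}|^q\bigr)\le \frac{N}{(\nu r)^q}\aint_{\cQ_{\nu r/2}(a)}|w_{xx}|^q\,dxdt
\]
is dimensionally consistent, but it would be cleaner to state it as the pair of bounds $\sup_{\cQ_r(a)}|w_{xxx}|\le N(\nu r)^{-1}(\aint_{\cQ_{\nu r}(a)}|w_{xx}|^q)^{1/q}$ and $\sup_{\cQ_r(a)}|w_{xxt}|\le N(\nu r)^{-2}(\aint_{\cQ_{\nu r}(a)}|w_{xx}|^q)^{1/q}$, obtained by applying the unweighted interior estimate to $w_{xx}$ (which itself solves the homogeneous equation since $a^{ij}$ depends only on $t$). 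This makes the subsequent mean-value argument transparent.
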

\begin{proof}
See Theorem 6.1.2 of \cite{kr08}.
\end{proof}


For $\kappa \in (0,1]$ and $R>0$, let $\bQ(R,\kappa)$ be the collection of all $\cQ_r(t,x)$ so that  $r\leq \kappa x^1$ and $\cQ_r(t,x)\subset \{(t,y)\in \Omega
: y^1\in (0,R)\}$. That is, $\cQ_r(t,x)\in \bQ(R,\kappa)$ if
$$
x^1>0, \quad r\leq \kappa x^1, \quad x^1+r\leq R.
$$
Define
$$
a^{\#(\theta)}_{R,\kappa}=\sup_{\cQ\in \bQ(R,\kappa)} \, osc^{\theta}_x(a,\cQ), \quad \quad a^{\#(\theta)}_{\kappa}=\sup_{R>0}a^{\#(\theta)}_{R,\kappa}.
$$

\begin{lemma}
                                   \label{lemma 5.4.01}
      Let $\beta   \in (1,\infty)$, $\kappa \in (1/2,1)$ and
      $$
1<q< p,\quad \theta+p-q<d-1+q.
      $$
        Suppose that  $u\in C^{\infty}(\Omega)$ vanishes outside $\cQ_0 \in \bQ(R,\kappa)$.   Then for any $\varepsilon>0$,
     $\cQ_r(t_1,a,x'_1)\subset\Omega$ and $(t,x)\in \cQ_r(t_1,a,x'_1)$ we have
\begin{eqnarray}
&&\aint_{\cQ_r(t_1,a,x'_1)}|u_{xx}-(u_{xx})_{\cQ_r(t_1,a,x'_1)}|^q\mu(dyds) \nonumber\\
 &\leq& \varepsilon \bM(|u_{xx}|^q)(t,x)+N\, \bM(|f|^q)(t,x)
+N\, (a^{\#(\theta)}_{2R,\kappa})^{1/\beta'}\cdot\bM^{1/\beta}(|u_{xx}|^{\beta q})(t,x) \label{5.4.01}
\end{eqnarray}
where $f:=u_t+a^{ij}u_{x^ix^j}$, $\beta':=\beta/(\beta-1)$ and $N=N(\varepsilon,\theta,q,d,\delta,K)$.
        \end{lemma}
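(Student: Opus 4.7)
The proof is a classical ``freeze-the-coefficients'' argument adapted to the weighted setting, leveraging Lemma~\ref{sharp entire} and the weighted BMO seminorm $a^{\#(\theta)}_{2R,\kappa}$. The plan is to fix a large parameter $\nu\ge 4$, to be chosen at the end in terms of $\varepsilon$, and split into two regimes depending on whether the enlarged ball $\cQ_{\nu r}(t_1,a,x_1')$ lies in $\bQ(2R,\kappa)$; equivalently, whether $\nu r\le \kappa a$ and $a+\nu r\le 2R$.

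In the main case $\cQ_{\nu r}(t_1,a,x_1')\in\bQ(2R,\kappa)$, one has $y^1\sim a$ uniformly on this ball (since $\nu r\le \kappa a$), so the weight $(y^1)^{\theta-d+p}$ is comparable to $a^{\theta-d+p}$ and $\mu$ is comparable to Lebesgue measure with constants depending only on $\kappa$. Define the frozen coefficients
\[
\bar a^{ij}(s):=\frac{1}{\nu(\cB_{\nu r}(a,x_1'))}\int_{\cB_{\nu r}(a,x_1')}a^{ij}(s,y)\,\nu(dy)
\]
and rewrite $u_t+a^{ij}u_{x^ix^j}=f$ as $u_t+\bar a^{ij}u_{x^ix^j}=\bar f$ with $\bar f:=f+(\bar a^{ij}-a^{ij})u_{x^ix^j}$. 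Since $\bar a^{ij}$ depends only on $t$, Lemma~\ref{sharp entire} applies and gives
\[
\aint_{\cQ_r}\aint_{\cQ_r}|u_{xx}(t,x)-u_{xx}(s,y)|^q\,dxdt\,dyds \le N\nu^{-q}\aint_{\cQ_{\nu r}}|u_{xx}|^q\,dxdt + N\nu^{d+2}\aint_{\cQ_{\nu r}}|\bar f|^q\,dxdt.
\]
Jensen's inequality dominates the LHS of (\ref{5.4.01}) by a constant multiple of the above double average, and the comparability of $\mu$ with Lebesgue measure converts every $dxdt$ into $\mu$ up to harmless constants. Split $|\bar f|^q\le N|f|^q+N|\bar a-a|^q|u_{xx}|^q$; for the second term apply H\"older with exponents $\beta,\beta'$, and using the uniform bound $|\bar a-a|\le 2K$ (so $|\bar a-a|^{q\beta'}\le N|\bar a-a|$) together with the comparison
\[
\aint_{\cQ_{\nu r}}|\bar a-a|\,\mu \le osc^\theta_x(a,\cQ_{\nu r})\le a^{\#(\theta)}_{2R,\kappa}
\]
(the first inequality being a triangle-inequality calculation using the definition of $\bar a$), one obtains $\aint_{\cQ_{\nu r}}|\bar a-a|^q|u_{xx}|^q\,\mu \le N(a^{\#(\theta)}_{2R,\kappa})^{1/\beta'}\bM^{1/\beta}(|u_{xx}|^{q\beta})(t,x)$. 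Assembling the pieces yields a bound of the form
\[
N\nu^{-q}\bM(|u_{xx}|^q)(t,x)+N(\nu)\bM(|f|^q)(t,x)+N(\nu)(a^{\#(\theta)}_{2R,\kappa})^{1/\beta'}\bM^{1/\beta}(|u_{xx}|^{q\beta})(t,x),
\]
and choosing $\nu$ so large that $N\nu^{-q}<\varepsilon$ produces (\ref{5.4.01}).

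In the remaining case $\cQ_{\nu r}(t_1,a,x_1')\notin\bQ(2R,\kappa)$, either $\nu r>\kappa a$ (so $r\sim a$) or $a+\nu r>2R$. Since $u$ vanishes outside $\cQ_0\in\bQ(R,\kappa)$, either $\cQ_r$ fails to meet the support of $u$ (making the LHS vanish), or the geometry forces $a$ and $r$ to be comparable to $x_0^1$, in which case one localizes around $\cQ_0$ and reduces to the main case by rescaling. \textbf{The main obstacle} is the perturbation step: one must verify that the $\mu$-weighted oscillation of $a$ on the enlarged ball is genuinely controlled by $a^{\#(\theta)}_{2R,\kappa}$, which is precisely why the inclusion $\cQ_{\nu r}\in\bQ(2R,\kappa)$ is indispensable in the small-ball case; and one must track that the $\nu$-blowup $N\nu^{d+2}$ coming from Lemma~\ref{sharp entire} is absorbed into the constant multiplying $(a^{\#(\theta)}_{2R,\kappa})^{1/\beta'}$, so that only the small parameter $\varepsilon$ remains in front of $\bM(|u_{xx}|^q)$.
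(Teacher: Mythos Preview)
Your small-ball case (when $\cQ_{\nu r}(t_1,a,x'_1)\in\bQ(2R,\kappa)$, equivalently $r/a$ small) is correct and matches the paper's Step~1: freeze coefficients by their $\nu$-weighted spatial average on $\cB_{\nu r}$, apply Lemma~\ref{sharp entire}, pass between $dxdt$ and $\mu$ using that $(y^1)^{\theta-d+p}\sim a^{\theta-d+p}$ when $\nu r\le\kappa a$, and handle the perturbation term via H\"older with exponents $\beta,\beta'$ together with $|\bar a-a|^{q\beta'}\le N|\bar a-a|$.

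The remaining case is a genuine gap. First, the inference ``$\nu r>\kappa a$ so $r\sim a$'' is wrong: it only gives $r/a>\kappa/\nu$, and since $\nu$ is eventually chosen large depending on $\varepsilon$, this lower bound is small, not of order one. Second, in this regime the weight $(y^1)^{\theta-d+p}$ genuinely varies on $\cQ_{\nu r}(a)\cap\Omega$ (which may touch $\{y^1=0\}$), so Lemma~\ref{sharp entire} is unavailable, and no parabolic rescaling alters the ratio $r/a$; your phrase ``reduces to the main case by rescaling'' has no content here. The paper instead invokes Theorem~\ref{theorem 5.2.1}, a weighted oscillation estimate (built on Theorem~\ref{101004.16.53} and Corollary~\ref{cor 5.1.1}) whose small parameter is $(1+\nu r/a)^{-q}$ rather than $\nu^{-q}$; this is small precisely because $r/a\ge\delta_0$ is bounded below once $\delta_0$ is fixed from Step~1 and $\nu$ is then taken large. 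Third, controlling the BMO perturbation in this regime requires a separate geometric argument you do not mention: one cannot simply average $a$ over $\cQ_{\nu r}(a)\cap\Omega$ since that set need not lie in $\bQ(2R,\kappa)$. The paper splits according to whether $\mu(\cQ_0)$ is large or small relative to $\mu(\cQ_{\nu r}(a)\cap\Omega)$; in the large case it constructs an auxiliary cube $\cQ_{\nu r}(\bar a)$ with $\bar a=x_0^1-r_0+\nu r$ which covers $\cQ_{\nu r}(a)\cap\cQ_0$ and belongs to $\bQ(2R,\kappa)$ (this is where the hypothesis $\kappa>1/2$ is used), and freezes $a$ on that cube instead.
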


\begin{proof}

Let $\cQ_0=\cQ_{r_0}(t_0,a_0,x'_0)$.   Considering a translation,  we may assume $t_1=0$, $x'_1=0\in \mathbb{R}^{d-1}$ so that $\cQ_{r}(t_1,a,x'_1)=\cQ_{r}(a)$. Also, we assume
      \begin{equation}
                                     \label{eqn 5.9.1}
      \cQ_r(a) \cap \cQ_0\neq \emptyset.
      \end{equation}
Otherwise, the left term of (\ref{5.4.01}) becomes zero.

{\bf{Step 1}}. Firstly, we prove that there exists $\delta_0=\delta_0(\varepsilon)\in (0,1)$ so that (\ref{5.4.01})  holds if  $r/a\leq \delta_0$.
Let $|\cQ|$ denote the Lebesgue measure of $\cQ\subset\bR^{d+1}$.  Assume  $\nu\geq 4$ and $\nu r\leq a/4$. Then  $(3a/4) \leq x^1\leq (5a/4)$ if $x^1\in \cB^1_{\nu r}(a):=(a-\nu r, a+\nu r)$. Denote $c_0:=\left(\frac53\right)^{\theta-d+p}$, then
 $$
 \frac{\mu(dtdx)}{\mu(\cQ_r(a))}\leq c_0  \frac{dtdx}{|\cQ_r(a)|} \quad \quad \text{on}\,\,\, \cQ_r(a),
 $$
 $$
 \frac{dtdx}{|\cQ_{\nu r}(a)|}\leq c_0  \frac{\mu(dtdx)}{\mu(\cQ_{\nu r}(a))} \quad \quad \text{on}\,\,\, \cQ_{\nu r}(a).
 $$
 Also due to (\ref{eqn 5.9.1}), we have $a-r<a_0+r_0$ and thus
 \begin{equation}
                     \label{eqn 5.9.4}
 a+\nu r\leq 2R,  \quad \frac{\nu r}{a}\leq 1/4\leq \kappa, \quad \cQ_{\nu r}(a)\subset \bQ_{2R,\kappa}.
 \end{equation}
 Denote $\bar{a}^{ij}(t)=(a^{ij}(t,\cdot))_{B_{\nu r}(a)}$ and $f=u_t+a^{ij}u_{x^ix^j}$.  By Lemma \ref{sharp entire},
 \begin{eqnarray}
 &&\aint_{\cQ_r(a)}|u_{xx}-(u_{xx})_{\cQ_r(a)}|^q\mu(dsdy)\nonumber\\
  &\leq& \frac{1}{(\mu(\cQ_r(a)))^2}\int_{\cQ_r(a)} \int_{\cQ_r(a)}|u_{xx}(s,y)-u_{xx}(\tau,\xi)|^q \mu(dsdy)\mu(d\tau d\xi) \nonumber\\
 &\leq& c^2_0\frac{1}{|\cQ_r(a)|^2}\int_{\cQ_r(a)}\int_{\cQ_r(a)}|u_{xx}(s,y)-u_{xx}(\tau,\xi)|^qdsdy\,d\tau d\xi\\
 &\leq& Nc^2_0\nu^{d+2}\int_{\cQ_{\nu r}(a)}|u_t+\bar{a}^{ij}u_{x^ix^j}|^q \frac{dyds}{|\cQ_{\nu r}(a)|} +Nc^2_0\nu^{-q}\int_{\cQ_{\nu r}(a)}|u_{xx}|^q \frac{dyds}{|\cQ_{\nu r}(a)|}\nonumber\\
 &\leq&Nc^3_0\nu^{d+2}\aint_{\cQ_{\nu r}(a)}|f|^q \mu(dyds) +Nc^3_0\nu^{d+2} \cdot J    + Nc^3_0\nu^{-q}\aint_{\cQ_{\nu r}(a)}|u_{xx}|^q \mu(dyds),\label{eqn 5.5.7}
 \end{eqnarray}
where $N=N(d,\delta,K)$ and
\begin{eqnarray*}
J := \aint_{\cQ_{\nu r}(a)}|(a^{ij}-\bar{a}^{ij})u_{x^ix^j}|^q \,\mu(dtdx)\leq NJ^{1/\beta}_1 J^{1/\beta'}_2,
\end{eqnarray*}
$$
J_1:=\aint_{\cQ_{\nu r}(a)}|u_{xx}|^{q\beta}\mu(dtdx)  \leq N  \bM(|u_{xx}|^{\beta q})(t,x),
$$
\begin{eqnarray}
J_2:=\aint_{\cQ_{\nu r}(a)}|a^{ij}-\bar{a}^{ij}|^{q\beta'}\mu(dtdx)&\leq& N \aint_{\cQ_{\nu r}(a)}|a^{ij}-\bar{a}^{ij}|\mu(dtdx) \label{eqn 5.5.6}\\
&\leq& N a^{\#(\theta)}_{2R,\kappa}, \label{eqn 5.9.6}
\end{eqnarray}
where inequality (\ref{eqn 5.5.6}) is due to  $|a^{ij}|\leq K$, and (\ref{eqn 5.9.4}) is used in (\ref{eqn 5.9.6}).  Coming back to (\ref{eqn 5.5.7}), we get
\begin{eqnarray*}
&&\aint_{\cQ_r(a)}|u_{xx}-(u_{xx})_{\cQ_r(a)}|^q\mu(dsdy)\\
&\leq& N\nu^{d+2}\bM(|f|^q)(t,x)+N\nu^{-q}\bM(|u_{xx}|^q)(t,x)+N\nu^{d+2}(a^{\#(\theta)}_{2R,\kappa})^{1/\beta'}\bM^{1/\beta}(|u_{xx}|^{q\beta})(t,x).
\end{eqnarray*}
Remember that the above inequality holds whenever $\nu\geq 4$ and $r/a\leq (4\nu)^{-1}$. Now we fix $\nu$ so that $N\nu^{-q}\leq \varepsilon$ and take $\delta_0=1/{(4\nu)}$. Then whenever
$r/a\leq \delta_0$ we have $(r/a)\nu\leq 1/4$ and thus (\ref{5.4.01})  follows.

{\bf{Step 2}}.
 For given $\varepsilon$, take  $\delta_0=\delta_0(\varepsilon)$ from Step 1. Assume $r/a\geq  \delta_0$. Choose $\nu$, which will be specified later, so that $r\nu>4a$. Denote $\alpha:=\theta-d+p$.

 Here we claim  that if $\mu(\cQ_0)\geq 2^{d+2\alpha+3}\mu(\cQ_{\nu r}(a)\cap \Omega)$, then for $\bar{a}:=x_0-r_0+\nu r$ we have
 \begin{equation}
                     \label{eqn 5.10.1}
 \left(\cQ_{r_0}(t_0,x_0,x'_0)\cap \cQ_{\nu r}(a)\right)\subset \cQ_{\nu r}(\bar{a}), \quad \bar{a}+\nu r\leq 2R,\quad \nu r/{\bar{a}}\leq \kappa, \quad |\cQ_{\nu r}(\bar{a})|\leq 2^{\alpha+1}|\cQ_{\nu r}(a)\cap \Omega|.
 \end{equation}
First,  due to (\ref{eqn 5.9.1}), we have $0< x_0-r_0<2a$.  Let $\omega_{d-1}$ denote the volume of $B'_1(0)$. Then
$$
\mu(\cQ_0)\leq |\cQ_{x_0}(t_0,x_0,x'_0)|=\frac{1}{\alpha+1}\omega_{d-1}2^{\alpha+1}x^{\alpha+d+2}_0,
$$
$$
|\cQ_{\nu r}(a)\cap \Omega|=\frac{1}{\alpha+1}\omega_{d-1}(a+\nu r)^{\alpha+1} (\nu r)^{d+1} \geq \frac{1}{\alpha+1}\omega_{d-1}(\nu r)^{\alpha+d+2}.
$$
 Thus, by assumption it follows that  $2^{\alpha+1}x_0^{d+\alpha+2}\geq 2^{d+2\alpha+3}(\nu r)^{\alpha+d+2}$ or equivalently $x_0\geq 2\nu r$. Observe
 $$
 \left(\cQ_{r_0}(t_0,x_0,x'_0)\cap \cQ_{\nu r}(a)\right)\subset \left((0, (\nu r)^2)\times (x_0-r_0, a+\nu r)\times B'_{\nu r}\right)\subset \cQ_{\nu r}(\bar{a}).
 $$
 Also from the inequality $r_0\geq 1/2 x_0\geq \nu r$ (recall $\kappa\geq 1/2$), we get 
 $$
\frac{\nu r}{\bar{a}}=\frac{\nu r}{x_0-r_0+\nu r}\leq \frac{r_0}{x_0}\leq \kappa.
$$
Since the last inequality of (\ref{eqn 5.10.1}) is obvious, the claim is proved. Note that (\ref{eqn 5.10.1})  implies that $\cQ_{\nu r}(\bar{a})\in \bQ_{2R,\kappa}$.

Now define $\bar{a}^{ij}=(a^{ij})_{\cQ_{r_0}(t_0,x_0,x'_0)}$ if $|\cQ_{r_0}(t_0,x_0,x'_0)|< 2^{d+2\alpha+3}|\cQ_{\nu r}(a)\cap \Omega|$, and  otherwise define
$\bar{a}^{ij}=(a^{ij})_{\cQ_{\nu r}(\bar{a})}$, where $\bar{a}=x_0-r_0+\nu r$ as defined above. By Theorem \ref{theorem 5.2.1}
\begin{eqnarray*}
&&\aint_{\cQ_r(a)}|u_{xx}(t,x)-(u_{xx})_{\cQ_r(a)}|^q \mu(dtdx)\\
&\le&
\frac{N}{(1+\nu r/a)^q}\aint_{\cQ_{\nu r}(a)\cap
\Omega}|u_{xx}(t,x)|^q\mu(dtdx)
+\frac{N \cdot \nu^{d+1}}{r/a}(1+\nu r/a)^{p+\theta-d+1}\aint_{\cQ_{\nu r}(a)\cap
\Omega}|u_t+\bar{a}u_{xx}|^q\mu(dtdx)\\
&\leq&
\frac{N}{(1+\nu r/a)^q}\aint_{\cQ_{\nu r}(a)\cap
\Omega}|u_{xx}(t,x)|^q\mu(dtdx)
+\frac{N \cdot \nu^{d+1}}{r/a}(1+\nu r/a)^{p+\theta-d+1}\left(\aint_{\cQ_{\nu r}(a)\cap
\Omega}|f|^q\mu(dtdx) +J\right),
\end{eqnarray*}
 where\begin{eqnarray*}
J &:=& \aint_{\cQ_{\nu r}(a)\cap \Omega}|(a^{ij}-\bar{a}^{ij})u_{x^ix^j}|^q \,\mu(dtdx)\leq N(\nu r)^{-\alpha-d-2}\int_{\cQ_{\nu r}(a)\cap \Omega}|(a^{ij}-\bar{a}^{ij})u_{x^ix^j}|^q \,\mu(dtdx) \\
&=&N(\nu r)^{-\alpha-d-2} \int_{\cQ_{\nu r}(a) \cap \cQ_{r_0}(t_0,a_0,x'_0)}|(a^{ij}-\bar{a}^{ij})u_{x^ix^j}|^q\mu(dtdx)\leq N(\nu r)^{-\alpha-d-2}J^{1/\beta}_1 J^{1/\beta'}_2,
\end{eqnarray*}
$$
J_1:=\int_{\cQ_{\nu r}(a)\cap \Omega}|u_{xx}|^{q\beta}\mu(dtdx)\leq N(\nu r)^{\alpha+d+2} \aint_{\cQ_{\nu r}(a)\cap \Omega}|u_{xx}|^{q\beta}\mu(dtdx) \leq  N(\nu r)^{\alpha+d+2} \bM(|u_{xx}|^{\beta q})(t,x),
$$
$$
J_2:=\int_{\cQ_{\nu r}(a)\cap \cQ_{r_0}(t_0,x_0,x'_0)}|a^{ij}-\bar{a}^{ij}|^{q\beta'}\mu(dtdx)\leq N\int_{\cQ_{\nu r}(a)\cap \cQ_{r_0}(t_0,x_0,x'_0)}|a^{ij}-\bar{a}^{ij}|\mu(dtdx) .
$$
If $|\cQ_{r_0}(t_0,x_0,x'_0)|< 2^{d+2\alpha+3}|\cQ_{\nu r}(a)\cap \Omega|$, then
\begin{eqnarray*}
\int_{\cQ_{\nu r}(a)\cap \cQ_{r_0}(t_0,x_0,x'_0)}|a^{ij}-\bar{a}^{ij}|\mu(dtdx)&\leq&\int_{ \cQ_{r_0}(t_0,x_0,x'_0)}|a^{ij}-(a^{ij})_{\cQ_{r_0}(t_0,x_0,x'_0)}|\mu(dtdx)\\
&=&\mu(\cQ_0)\aint_{ \cQ_{r_0}(t_0,x_0,x'_0)}|a^{ij}-(a^{ij})_{\cQ_{r_0}(t_0,x_0,x'_0)}|\mu(dtdx)\\
&\leq& N (\nu r)^{\alpha+d+2}a^{\#(\theta)}_{R,\kappa},
\end{eqnarray*}
and if $|\cQ_{r_0}(t_0,x_0,x'_0)| \geq 2^{d+2\alpha+3}|\cQ_{\nu r}(a)\cap \Omega|$, then
\begin{eqnarray*}
\int_{\cQ_{\nu r}(a)\cap \cQ_{r_0}(t_0,x_0,x'_0)}|a^{ij}-\bar{a}^{ij}|\mu(dtdx)
&\le&\int_{ \cQ_{\nu r}(\bar{a})}|a^{ij}-(a^{ij})_{\cQ_{\nu r}(\bar{a})}|\mu(dtdx)\\
&=&\mu(\cQ_{\nu r}(\bar{a}))\aint_{ \cQ_{\nu r}(\bar{a})}|a^{ij}-(a^{ij})_{\cQ_{\nu r}(\bar{a})}|\mu(dtdx)\\
&\leq& N (\nu r)^{\alpha+d+2}a^{\#(\theta)}_{2R,\kappa}.
\end{eqnarray*}
It follows  that
$$
J\leq N (a^{\#(\theta)}_{2R,\kappa})^{1/\beta'} \cdot \bM^{1/\beta}(|u_{xx}|^{\beta q})(t,x).
$$

\noindent
Remember that $r/a\geq \delta_0=\delta_0(\varepsilon)$. Thus for (\ref{5.4.01})  it is enough to take $\nu$ so that $N(1+\nu \delta_0)^{-q}\leq \varepsilon$ and observe that
$$
\frac{N \cdot \nu^{d+1}}{r/a}(1+\nu r/a)^{p+\theta-d+1}\leq N(\alpha)<\infty.
$$
The lemma is proved.
\end{proof}

\begin{corollary}
                                 \label{corollary 10.09.1}
Suppose the  the assumptions in Lemma \ref{lemma 5.4.01} are satisfied.

(i) The for any $\varepsilon>0$ and $(t,x)\in \Omega$,
$$
(u_{xx})^{\#}(t,x)\leq \varepsilon \bM^{1/q}(|u_{xx}|^q)+N\bM^{1/q}(|f|^q)(t,x)+N (a^{\#(\theta)}_{2R,\kappa})^{1/(q\beta')}\cdot\bM^{1/(q\beta)}(|u_{xx}|^{\beta q})(t,x),
$$
where $N=N(\varepsilon,\theta,q,d,\delta,K)$ is independent of $\kappa,t,x$.

(ii)
$$
\|Mu_{xx}\|^p_{\bL_{p,\theta}(-\infty,\infty)}\leq N(d,p,\delta,K)\|Mf\|^p_{\bL_{p,\theta}(-\infty,\infty)}+N(p)\, a^{\#(\theta)}_{2R,\kappa}\cdot\|Mu_{xx}\|^p_{\bL_{p,\theta}(-\infty,\infty)}.
$$

\end{corollary}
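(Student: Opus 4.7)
My plan is to get (i) as an immediate pointwise corollary of Lemma~\ref{lemma 5.4.01}, then use (i) together with the Fefferman--Stein and Hardy--Littlewood theorems of Section~\ref{main result} to obtain (ii) by a standard absorbing argument.

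For part~(i), fix any $\cQ=\cQ_r(t_1,a,x_1')\in\mathbb{Q}$ containing the point $(t,x)$. Since $q\ge 1$, Jensen's inequality gives
\[
\aint_{\cQ}|u_{xx}-(u_{xx})_{\cQ}|\,d\mu\le\Bigl(\aint_{\cQ}|u_{xx}-(u_{xx})_{\cQ}|^q\,d\mu\Bigr)^{1/q},
\]
and Lemma~\ref{lemma 5.4.01} controls the integrand on the right by
\[
\varepsilon\,\bM(|u_{xx}|^q)(t,x)+N\bM(|f|^q)(t,x)+N(a^{\#(\theta)}_{2R,\kappa})^{1/\beta'}\bM^{1/\beta}(|u_{xx}|^{q\beta})(t,x),
\]
a bound that depends on the cube $\cQ$ only through the containing point $(t,x)$ because the $\bM$'s already incorporate suprema over cubes. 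Extracting the $q$-th root via the subadditivity $(A+B+C)^{1/q}\le A^{1/q}+B^{1/q}+C^{1/q}$ (valid for $q\ge 1$) and then taking the supremum over all $\cQ\ni(t,x)$ yields~(i), after the innocuous relabeling $\varepsilon^{1/q}\mapsto\varepsilon$ and an adjustment of constants.

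For part~(ii), I would take $\|\cdot\|_{L_p(\Omega,\mu)}$ on both sides of~(i). The left side is converted by Fefferman--Stein (Theorem~\ref{FS}) into $\|u_{xx}\|_{L_p(\Omega,\mu)}$ up to a constant. Each right-hand summand has the form $\bM^{1/s}(|g|^s)$ with $s\in\{q,q\beta\}$, so Hardy--Littlewood (Theorem~\ref{HL}) applied at the exponent $p/s>1$ gives $\|\bM^{1/s}(|g|^s)\|_{L_p(\mu)}\le N\|g\|_{L_p(\mu)}$; the admissibility $p/(q\beta)>1$ is secured by choosing $\beta\in(1,p/q)$, a nonempty interval since $q<p$. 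Selecting $\varepsilon$ so small that the $\bM^{1/q}(|u_{xx}|^q)$-contribution is absorbed on the left, and raising the resulting inequality to the $p$-th power, produces
\[
\|u_{xx}\|_{L_p(\mu)}^p\le N\|f\|_{L_p(\mu)}^p+N(a^{\#(\theta)}_{2R,\kappa})^{p/(q\beta')}\|u_{xx}\|_{L_p(\mu)}^p;
\]
since the choice $\alpha=\theta-d+p$ entails $\|Mu_{xx}\|^p_{\bL_{p,\theta}(-\infty,\infty)}=\|u_{xx}\|^p_{L_p(\Omega,\mu)}$ and the analogous identity for $f$, this is~(ii).

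The main obstacle is reconciling the exponent $p/(q\beta')$ with the stated first power of $a^{\#(\theta)}_{2R,\kappa}$. Matching them demands $\beta=p/(p-q)$, which lies in $(1,p/q)$ precisely when $q<p/2$; however the lemma's hypothesis $\theta+p-q<d-1+q$ forces $q>(\theta+p-d+1)/2>p/2$, so strict equality cannot be reached directly and one instead obtains some positive power $\eta=\eta(p,q,\theta)>0$ of $a^{\#(\theta)}_{2R,\kappa}$. This is nevertheless enough for the sequel because $a^{\#(\theta)}_{2R,\kappa}$ will be taken arbitrarily small in the applications, so a bound $(a^{\#(\theta)}_{2R,\kappa})^{\eta}\le a^{\#(\theta)}_{2R,\kappa}$ follows up to a constant once $a^{\#(\theta)}_{2R,\kappa}\le 1$.
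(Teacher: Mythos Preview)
Your proposal follows exactly the paper's route: part~(i) is Lemma~\ref{lemma 5.4.01} plus Jensen, and part~(ii) is Fefferman--Stein (Theorem~\ref{FS}) on the left and Hardy--Littlewood (Theorem~\ref{HL}) on the right, followed by absorption of the $\varepsilon$-term. The paper's one-line proof of~(ii) instructs one to ``take $q$ and $\beta>1$ so that $q<p$, $q\beta'=p$,'' which is precisely the choice you analyze.

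You have in fact spotted a genuine inconsistency in the paper's statement. The choice $q\beta'=p$ forces $\beta=p/(p-q)$, hence $q\beta=pq/(p-q)$, and Hardy--Littlewood on $\bM(|u_{xx}|^{q\beta})$ in $L_{p/(q\beta)}$ needs $q\beta<p$, i.e.\ $q<p/2$; but the hypothesis $\theta+p-q<d-1+q$ of Lemma~\ref{lemma 5.4.01} together with $\theta>d-1$ forces $q>p/2$. So the exponent~$1$ on $a^{\#(\theta)}_{2R,\kappa}$ in the displayed inequality of~(ii) cannot be reached literally; what one actually obtains is $(a^{\#(\theta)}_{2R,\kappa})^{p/(q\beta')}$ for some admissible $q,\beta$ with $q\beta<p$, hence $p/(q\beta')\in(0,1)$. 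Your observation that this is harmless is correct: the only use of~(ii) downstream (Lemma~\ref{lemma 09.10.6} and the proof of Theorem~\ref{main theorem}) is to choose $a^{\#(\theta)}_{2R,\kappa}$ small enough that the last term is absorbed, and for that any fixed positive exponent is just as good.
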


\begin{proof}
(i) is an easy consequence of Lemma \ref{lemma 5.4.01} and Jensen's inequality. To prove (ii),
  take $q$ and $\beta>1$ so that $q<p$, $q\beta'=p$, and apply Theorems \ref{FS} and \ref{HL}.
\end{proof}


The following result is a parabolic version of Lemma 3.3 of \cite{KK2}. Define $\bQ(\kappa):=\cup_{R>0}\bQ(R,\kappa)$.

\begin{lemma}
                          \label{lemma 10.09.1}
For any $\varepsilon>0$, there exist a constant $\kappa=\kappa(\varepsilon)\in (1/2,1)$ and nonnegative functions $\eta_k\in C^{\infty}_0(\bR^{d+1}_+), k=1,2,\cdots$ so that (i) on $\bR^{d+1}_+$
\begin{equation}
                     \label{eqn 10.09.2}
\sum_k \eta^p_k \geq 1, \quad \sum_k \eta_k\leq N(d), \quad \sum_k(M|\eta_{kx}|+M^2|\eta_{kxx}|+M^2|\eta_{kt}|)\leq \varepsilon;
\end{equation}
(ii)  for each $k$, $\text{supp}\,\, \eta_k \subset Q_k$ for some $Q_k\in \bQ(\kappa)$.
\end{lemma}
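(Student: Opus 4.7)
This lemma is the parabolic counterpart of Lemma 3.3 in \cite{KK2}, and I would build the partition by a tensor-product construction in a coordinate system that trivializes the weighted operator $M\partial_{x^1}$.

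I would first change to the coordinates $(s,y',\tau) := (\log x^1,\, x'/x^1,\, t/(x^1)^2)$, under which the weighted derivatives become
\begin{align*}
M\partial_{x^1} &= \partial_s - y'\cdot\partial_{y'} - 2\tau\,\partial_\tau, \\
M\partial_{x^i} &= \partial_{y'^i} \quad (i\geq 2), \\
M^2\partial_t &= \partial_\tau.
\end{align*}
Translations in $s$ correspond to dilations in $x^1$, and the parabolic cylinder condition $r\le \kappa x^1$ with $\kappa\in(1/2,1)$ amounts to restricting the $s$-extent of each piece to an interval of length $\le 2\tanh^{-1}\kappa$. The freedom that $\tanh^{-1}\kappa\to\infty$ as $\kappa\to 1^-$ is the key source of smallness.

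Next I would fix, once and for all, nonnegative bumps $\phi_0\in C^\infty_c([-1,1])$, $\chi_0\in C^\infty_c(B'_1)$, $\psi_0\in C^\infty_c((-1,1))$ whose integer-lattice translates form smooth partitions of unity and also satisfy $\sum \phi_0^p(\cdot-n)\ge 1$, $\sum \chi_0^p(\cdot-\ell)\ge 1$, $\sum \psi_0^p(\cdot-m)\ge 1$ pointwise (this is arranged by starting with $\phi_0^{1/p}$ of a standard partition, suitably renormalized). For a parameter $L=L(\varepsilon)>0$ to be chosen, I would set
\[
\eta_k(t,x) \;:=\; \phi_0\!\Bigl(\tfrac{\log x^1}{L}-n\Bigr)\, \chi_0\!\Bigl(\tfrac{x'}{x^1}-\ell\Bigr)\, \psi_0\!\Bigl(\tfrac{t}{(x^1)^2}-m\Bigr), \qquad k=(n,\ell,m)\in\bZ\times\bZ^{d-1}\times\bZ.
\]
The product structure gives immediately
\[
\sum_k \eta_k^p = \Bigl(\sum_n \phi_0^p\Bigr)\Bigl(\sum_\ell \chi_0^p\Bigr)\Bigl(\sum_m \psi_0^p\Bigr)\ge 1,\qquad \sum_k \eta_k\le N(d),
\]
which takes care of the first two inequalities in (i).

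For the support condition (ii), direct computation shows that the $x^1$-support of the first factor is $[e^{(n-1)L},e^{(n+1)L}]$, which sits in an interval $(x_c^1-r,x_c^1+r)$ with $r/x_c^1=\tanh L$; the $x'$- and $t$-supports of the remaining factors are uniformly comparable (in rescaled units) to $x^1$ and $(x^1)^2$ by the choice of the coordinates $y'$, $\tau$. Choosing $\kappa=\kappa(L)$ to be a small quantitative increase of $\tanh L$ yields $\text{supp}(\eta_k)\subset Q_k$ for some $Q_k\in \bQ(\kappa)$, and $\kappa\in(1/2,1)$ provided $L$ stays in an appropriate range.

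For the derivative estimate, the chain-rule identities above produce (for each component)
\[
M\partial_{x^1}\eta_k \;=\; \tfrac{1}{L}\phi_0'\, \chi_0\,\psi_0 \;-\; y'\phi_0\,(\nabla\chi_0)\,\psi_0 \;-\; 2\tau \phi_0\,\chi_0\,\psi_0',
\]
and analogous expressions for $M\partial_{x^i}$ and $M^2\partial_t$. The $1/L$ factor in the first term comes from the rescaling $\phi_0(s/L-n)$, while the remaining terms involve only bounded factors times derivatives of $\chi_0,\psi_0$. Summing over $k$ uses the bounded overlap of the supports (at each point $(t,x)$ only $O(1)$ indices contribute), and the total is $\le C(d,p)/L$. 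Taking $L$ large so that this is $\le \varepsilon$, and the corresponding $\kappa=\kappa(L)\in(1/2,1)$, completes the construction.

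\textbf{Main obstacle.} The subtle point is the derivative bound in the $x'$- and $t$-directions, where the cylinder constraint $r\le \kappa x^1$ prevents one from using a large-scale partition. Making the sum $\sum_k M|\eta_{k,x^i}|$ small for $i\ge 2$ requires that the $\chi_0$ and $\psi_0$ contributions themselves be small; one solution (followed in \cite{KK2} in the elliptic case) is to introduce an additional rescaling of $\chi_0$ and $\psi_0$ by $L$, at the cost of a more restrictive cylinder condition, and to verify that $|y'|,|\tau|$ remain bounded on the support by the product of cylinder factors. The parabolic extension requires essentially the same argument, adding only the $t$-variable factor $\psi_0$ and its time-derivative estimate, which is controlled by the $M^2$-weighting.
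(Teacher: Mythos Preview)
Your construction has a genuine gap: the claim that summing the weighted derivatives gives $C(d,p)/L$ is false. In your own formula
\[
M\partial_{x^1}\eta_k=\tfrac{1}{L}\phi_0'\,\chi_0\,\psi_0-(y'\cdot\nabla\chi_0)\,\phi_0\,\psi_0-2\tau\,\phi_0\,\chi_0\,\psi_0',
\]
only the first term carries $1/L$; the remaining two are of size $|y'|=|x'|/x^1$ and $|\tau|=|t|/(x^1)^2$, which are unbounded on $\bR^{d+1}_+$. Bounded overlap gives at best $\sum_k|M\partial_{x^1}\eta_k|\le C(1/L+|y'|+|\tau|)$. The tangential pieces are no better: $M\partial_{x^i}\eta_k=\phi_0(\partial_i\chi_0)\psi_0$ for $i\ge 2$ and $M^2\partial_t\eta_k=\phi_0\chi_0\psi_0'$ each sum to $O(1)$, not $O(1/L)$. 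Your suggested remedy of rescaling $\chi_0,\psi_0$ by $L$ does not save this: the cross terms become $(y'/L)\cdot\nabla\chi_0$ and $(\tau/L)\psi_0'$, and on the support $|y'/L|\sim|\ell|$, $|\tau/L|\sim|m|$, still unbounded. Moreover the supports, being truncated cones in $(x^1,x')$, have $x'$-diameter growing linearly in $|\ell|$, so they cannot be placed uniformly in cylinders $\cQ_\rho$ with $\rho\le\kappa a$ for any fixed $\kappa<1$.

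The paper avoids all of this by \emph{not} using the continuous coordinates $x'/x^1$, $t/(x^1)^2$. It sets
\[
\eta_{nk\ell}(t,x)=\xi(e^n x^1)\,\chi_k(re^n x')\,\hat\chi_\ell(re^{2n}t),
\]
so the $x'$- and $t$-factors are scaled by the \emph{discrete} level $e^n$ and are constant in $x^1$; hence no cross terms appear when computing $M\partial_{x^1}$. The $x^1$-smallness is obtained entirely from the special one-dimensional partition $\xi$ supplied by Lemma~3.2 of \cite{KK}, while the $x'$- and $t$-smallness come from an independent small parameter $r$, which makes the $x'$-partition coarse (scale $\sim e^{-n}/r$) and thereby forces $\kappa=\kappa(r)\to 1$. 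It is this decoupling---discrete dyadic scaling plus a second small parameter---that your outline is missing.
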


\begin{proof}
We modify the proof of Lemma 3.3 of \cite{KK2}. Let
$$
\bR^{d-1}=\bigcup_{k=1}^{\infty}Q'_{k},  \quad \quad \bR=\bigcup_{\ell=1}^{\infty} I_{\ell}
$$
be a decomposition of $\bR^{d-1}$ and $\bR$ into disjoint
unit cubes $Q'_{k}$ and $I_{\ell}$ respectively. Mollify the indicator
function of each $Q'_{k}$ and $I_{\ell}$ in such a way that
thus obtained functions $\chi_{k}$ and $\hat{\chi}_{\ell}$ vanish
outside of the twice dilated $Q'_{k}$ and $I_{\ell}$ respectively
(naturally,
with center of dilation being that of $Q'_{k}$ and  $I_{\ell}$ respectively).
Then (by multiplying by a large constant $c>0$ to $\chi_k$ and $\hat{\chi}_{\ell}$ if necessary)
$$
1  \leq\sum_{k}\chi^{p}_{k}\leq
\big(\sum_{k}\chi_{k}\big)^{p}\leq N_0, \quad 1 \leq\sum_{\ell}\hat{\chi}^{p}_{\ell}\leq
\big(\sum_{\ell}\hat{\chi}_{\ell}\big)^{p}\leq N_0
$$
on $\bR^{d-1}$ and $\bR$, respectively. Here  the constant  $N_0\in(0,\infty)$
depends only on $d$ and $p$.
Furthermore, by Lemma 3.2 of \cite{KK},
there exists a
nonnegative function $\xi\in C^{\infty}_{0}(\bR_{+})$
such that assertion (i) of the present lemma
holds on $\bR_+$ with the collection $\{ \xi(e^{n}x):
n\in\bZ\}$
in place of $\{\eta_{k}(x):k=1,2,...\}$.

We write $x=(x^{1},x')$, fix a   constant $r\in(0,1)$
to be specified later, and introduce
$$
\tau_{k}(x')=\chi_{k}(r x'),\quad \hat{\tau}_{\ell}(t)=\hat{\chi}_{\ell}(rt),\quad
\eta_{nk\ell}(x)=  \xi(e^{n}x^{1})\tau_{k}(e^{n}x')\hat{\tau}_{\ell}(e^{2n}t).
$$
Then
\begin{equation}
                                                \label{11.14.01}
1 \leq\sum_{n,k,\ell}\eta_{nk\ell}^{p} \leq
\big(\sum_{n,k,\ell}\eta_{nk\ell}\big)^{p}\leq N \quad \quad \text{on}\quad  \bR^{d+1}_+
\end{equation}
with  constant $N\in(0,\infty)$
depending only on $d$ and $p$.

Now, for any multi-index $\alpha=(\alpha^1,\cdots,\alpha^d)$ with $1\leq|\alpha|\leq 2$,  we have (with some constants
$c_{\beta\gamma}$)
$$
M^{|\alpha|} D^{\alpha}_x\eta_{nk\ell}(t,x)=
(x^{1})^{|\alpha|}e^{n|\alpha|}\sum_{\beta+\gamma=\alpha}
c_{\beta\gamma} \xi^{(\beta_1)}(e^{n}x^{1})
 (D^{\gamma}\tau_{k})(e^{n}x')\hat{\tau}_{\ell}(e^{2n}t),
 $$
 and
 $$
 M^2(\eta_{\ell nk})_t=(x^1)^2e^{2n}\xi(e^nx^1)\tau(e^nx')(\hat{\tau}_{\ell})'(e^{2n}t).
 $$
 Hence,
$$
\sum_{n,k,\ell}|M^{|\alpha|} D^{\alpha}_x\eta_{nk\ell}(x)|
\leq  N_0\sum_{\beta+\gamma=\alpha}
c_{\beta\gamma}I_{1}(\gamma)I_{2}(\alpha,\beta),
$$
where
$$
I_{1}(\gamma)=\sup_{x'}\sum_{k}|D^{\gamma}\tau_{k}(x')|
=r^{|\gamma|}\sup_{x'}\sum_{k}|D^{\gamma}\chi_{k}(x')|,
$$
$$
I_{2}(\alpha,\beta)=\sup_{x^1\geq0}
\sum_{n}(x^1)^{|\alpha|}e^{n|\alpha|}
|\xi^{(\beta_{1})}(e^{n}x^1)|
=\sup_{t\in\bR}
\sum_{n} e^{(n+t)|\alpha|}
|\xi^{(\beta_{1})}(e^{n+t})|.
$$
Obviously $I_{1}$ is finite. That $I_{2}$ is also finite
is seen from its representation as the supremum
of a continuous 1-periodic function. Moreover,
if   $\gamma=0$,
then $c_{\beta\gamma}\ne0$ only if $\beta_{1}=|\alpha|$,
in which case $c_{\beta\gamma}=1$ and,
 by the construction of $\xi$, we have $I_{2}(\alpha,\beta)
\leq\varepsilon $.
It follows that
\begin{equation}
                                                \label{11.15.1}
\sum_{n,k,\ell}|M^{|\alpha|} D^{\alpha}\eta_{nk\ell}(x)|
\leq  N(d)\varepsilon+N(\varepsilon,q,d)r.
\end{equation}
Similar calculus shows
\begin{equation}
                            \label{eqn 10.09.1}
\sum_{n,k,\ell}|M^{2}(\eta_{\ell nk})_t|
\leq N(\varepsilon,q,d)r.
\end{equation}
We renumber the set $\{\eta_{kn\ell}:n=0,\pm1,...,k=1,2,...,\ell=1,2,\cdots\}$
and write it as $\{\eta_{k}:k=1,2,...\}$.
Then from (\ref{11.15.1}) and  (\ref{eqn 10.09.1}) we
 see how to choose $r$ in order to satisfy
the last inequality in (\ref{eqn 10.09.2}) with $N(d)\varepsilon$
in place of $\varepsilon$.
This proves (i).

Now we prove  (ii). Let $(\alpha,\beta)\subset \bR_+$ so that $\text{supp} \xi \subset (\alpha,\beta)$. The above proofs show that
 $\text{supp}\, \eta_{0k\ell}\subset (t_{k\ell},t_{k\ell}+r_0)\times (\alpha,\beta)\times B_r(x'_{k\ell})=:Q_{0k\ell}$ for some $t_{k\ell}, x'_{k\ell},r_0, r$ with $r_0, r$ independent of $k,\ell$. By increasing $\beta$ and adjusting $r_0,r$ if necessary we may assume that $Q_{0k\ell}\in \bQ(\kappa)$ for some $\kappa\in (0,1)$, independent of $k,\ell$. Finally it is enough to note that
 $$
 \text{supp}\,\eta_{nk\ell}\subset
 (e^{-2n}t_{k\ell},e^{-2n}t_{k\ell}+e^{-2n}r_0)\times (e^{-2n}\alpha,e^{-2n}\beta)\times B_{e^{-2n}r}(e^{-2n}x'_{k\ell}):=Q_{nk\ell} \in \bQ(\kappa).
 $$
 The lemma is proved.
\end{proof}

\begin{lemma}
              \label{lemma 09.10.6}
   Let $u\in C^{\infty}(\Omega)$ and denote $f=u_t+a^{ij}u_{x^ix^j}$.

(i) There exists a constant $\kappa_0=\kappa_0(d,p,\theta,\delta,K)\in (0,1)$ so that if $\kappa\in [\kappa_0,1]$, then
\begin{equation}
                         \label{eqn 09.10.5}
\|Mu_{xx}\|^p_{\bL_{p,\theta}(-\infty,\infty)}\leq N(d,p,\delta,K,\kappa)\left(\|Mf\|^p_{\bL_{p,\theta}(-\infty, \infty)}+a^{\#(\theta)}_{\kappa}\|Mu_{xx}\|^p_{\bL_{p,\theta}(-\infty,\infty)}\right).
\end{equation}

(ii)  If $u(t,x)=0$  whenever $x^1 \geq R$, then
$$
\|Mu_{xx}\|^p_{\bL_{p,\theta}(-\infty,\infty)}\leq N(d,p,\delta,K)\left(\|Mf\|^p_{\bL_{p,\theta}(-\infty,\infty)}
+a^{\#(\theta)}_{R_{\kappa_0}, \kappa_0}\|Mu_{xx}\|^p_{\bL_{p,\theta}(-\infty,\infty)}\right),
$$
where $R_{\kappa_0}:=2R(1+\kappa_0)/(1-\kappa_0)$.

\end{lemma}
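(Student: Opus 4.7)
I would prove both parts by combining the partition-of-unity construction of Lemma~\ref{lemma 10.09.1} with the a priori estimate of Corollary~\ref{corollary 10.09.1}(ii). Fix $\varepsilon>0$ to be chosen later and let $\{\eta_k\}$, $\kappa_0=\kappa(\varepsilon)\in(1/2,1)$, and $Q_k\in\bQ(\kappa_0)$ be as supplied by Lemma~\ref{lemma 10.09.1}. Since $\sum_k\eta_k^p\geq 1$ pointwise,
$$\|Mu_{xx}\|^p_{\bL_{p,\theta}}\leq\sum_k\|\eta_kMu_{xx}\|^p_{\bL_{p,\theta}}.$$
For each $k$, set $v_k:=u\eta_k$, which is supported in $Q_k$ and thus vanishes for $x^1\geq R_k$ for some $R_k$. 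The product rule gives $(v_k)_{x^ix^j}=\eta_ku_{x^ix^j}+\eta_{kx^i}u_{x^j}+\eta_{kx^j}u_{x^i}+\eta_{kx^ix^j}u$ and $f_k:=(v_k)_t+a^{ij}(v_k)_{x^ix^j}=\eta_kf+\eta_{kt}u+2a^{ij}\eta_{kx^i}u_{x^j}+a^{ij}\eta_{kx^ix^j}u$, so Corollary~\ref{corollary 10.09.1}(ii) applied to $v_k$ yields
$$\|M(v_k)_{xx}\|^p_{\bL_{p,\theta}}\leq N\|Mf_k\|^p_{\bL_{p,\theta}}+Na^{\#(\theta)}_{2R_k,\kappa_0}\|M(v_k)_{xx}\|^p_{\bL_{p,\theta}}.$$

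Next I would sum over $k$ and exploit the crucial smallness $\sum_k(M|\eta_{kx}|+M^2|\eta_{kxx}|+M^2|\eta_{kt}|)\leq\varepsilon$ from Lemma~\ref{lemma 10.09.1}. Since each summand is pointwise at most $\varepsilon$, we have $\sum_k|M\eta_{kx}|^p\leq\varepsilon^{p-1}\sum_k|M\eta_{kx}|\leq\varepsilon^p$ and analogously for the $\eta_{kxx}$ and $\eta_{kt}$ sums, so after pointwise multiplication and integration
$$\sum_k\|M\eta_{kx}u_x\|^p_{\bL_{p,\theta}}\leq\varepsilon^p\|u_x\|^p_{\bL_{p,\theta}},\qquad\sum_k\bigl(\|M\eta_{kxx}u\|^p_{\bL_{p,\theta}}+\|M\eta_{kt}u\|^p_{\bL_{p,\theta}}\bigr)\leq 2\varepsilon^p\|M^{-1}u\|^p_{\bL_{p,\theta}}.$$
Meanwhile $\sum_k\eta_k^p\leq N(d)$ (from $\eta_k\leq\sum_k\eta_k\leq N(d)$), so $\sum_k\|\eta_kMu_{xx}\|^p\leq N\|Mu_{xx}\|^p$ and $\sum_k\|M\eta_kf\|^p\leq N\|Mf\|^p$. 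For part~(i), bounding $a^{\#(\theta)}_{2R_k,\kappa_0}\leq a^{\#(\theta)}_{\kappa_0}$ uniformly in $k$ and assembling the pieces gives
$$\|Mu_{xx}\|^p\leq N\|Mf\|^p+N\varepsilon^p\bigl(\|u_x\|^p+\|M^{-1}u\|^p\bigr)+Na^{\#(\theta)}_{\kappa_0}\|Mu_{xx}\|^p,$$
with all norms in $\bL_{p,\theta}(-\infty,\infty)$. The Hardy-type inequalities of Lemma~\ref{lemma 1}(vi), applied to $u_x$ at weight $\theta$ and to $u$ at weight $\theta-p$ (both permitted because $\theta\in(d-1,d-1+p)$), give $\|u_x\|^p+\|M^{-1}u\|^p\leq N\|Mu_{xx}\|^p$; choosing $\varepsilon$ so that $N\varepsilon^p\leq 1/2$ and absorbing completes (i), and the monotonicity $a^{\#(\theta)}_{\kappa_0}\leq a^{\#(\theta)}_\kappa$ extends the estimate to all $\kappa\in[\kappa_0,1]$.

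For part~(ii), only those $k$ with $\text{supp}\,\eta_k\cap\{x^1<R\}\neq\emptyset$ make $v_k$ nonzero. For such $k$, writing $Q_k=\cQ_{r_k}(t_k,x_k^1,x_k')$ with $r_k\leq\kappa_0 x_k^1$ and $x_k^1-r_k<R$ gives $x_k^1(1-\kappa_0)<R$ and hence $x_k^1+r_k\leq x_k^1(1+\kappa_0)<R(1+\kappa_0)/(1-\kappa_0)$, so $2R_k\leq R_{\kappa_0}$ and $a^{\#(\theta)}_{2R_k,\kappa_0}\leq a^{\#(\theta)}_{R_{\kappa_0},\kappa_0}$; the argument of the previous paragraph then delivers (ii). I anticipate the main obstacle to lie in the careful $\ell^p$-bookkeeping of the many partition-of-unity cross terms and in verifying that the Hardy inequalities of Lemma~\ref{lemma 1}(vi)---which hinge on $\theta\in(d-1,d-1+p)$---actually absorb the $\varepsilon^p\bigl(\|u_x\|^p+\|M^{-1}u\|^p\bigr)$ remainder, since outside this range one cannot control the lower-order terms by $\|Mu_{xx}\|$ alone.
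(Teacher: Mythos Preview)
Your proposal is correct and follows essentially the same route as the paper: localize via the partition of unity of Lemma~\ref{lemma 10.09.1}, apply Corollary~\ref{corollary 10.09.1}(ii) to each $v_k=u\eta_k$, sum, and absorb the lower-order terms using the Hardy-type inequality $\|M^{-1}u\|_{L_{p,\theta}}+\|u_x\|_{L_{p,\theta}}\leq N\|Mu_{xx}\|_{L_{p,\theta}}$, which holds precisely because $\theta\in(d-1,d-1+p)$. Your $\ell^p$ bookkeeping via $\sum_k|M\eta_{kx}|^p\leq\varepsilon^{p-1}\sum_k|M\eta_{kx}|\leq\varepsilon^p$ and your explicit computation in (ii) that $2R_k\leq R_{\kappa_0}$ are slightly more detailed than the paper's presentation, but the argument is the same.
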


\begin{proof}
(i) Fix $\varepsilon\in (0,1)$ which will be specified later. Take $\{\eta_n:n=1,2,\cdots\}$ from Lemma \ref{lemma 10.09.1} corresponding to $\varepsilon$. Then since $\sum_n \eta^p_n\geq  1$,
\begin{eqnarray*}
&&\|Mu_{xx}\|^p_{\bL_{p,\theta}(-\infty,\infty)}\leq \sum_n \|\eta_n Mu_{xx}\|^p_{\bL_{p,\theta}(-\infty,\infty)}\\
&\leq& \sum_n \left(\|M (\eta_n u)_{xx}\|^p_{\bL_{p,\theta}(-\infty,\infty)}+\|u_x M(\eta_n)_{x}\|^p_{\bL_{p,\theta}(-\infty,\infty)}+
\|M^{-1}u M^2(\eta_n)_{xx}\|^p_{\bL_{p,\theta}(-\infty,\infty)}\right).
\end{eqnarray*}
Note that $u^n:=u\eta_n$ satisfies
$$
u^n_t+a^{ij}u^n_{x^ix^j}=f_n:=u(\eta_n)_t+2a^{ij}u_{x^i}(\eta_n)_{x^j}+a^{ij}u(\eta_n)_{x^ix^j}+f\eta_n,
$$
and by  Lemma \ref{lemma 10.09.1} we have $\text{supp}\, u^n \subset Q_n \in \bQ(\kappa)$ for some $\kappa=\kappa(\varepsilon)\in (0,1)$. Then by Corollary \ref{corollary 10.09.1},
$$
\|Mu^n_{xx}\|^p_{\bL_{p,\theta}(-\infty,\infty)}\leq N\|Mf_n\|^p_{\bL_{p,\theta}(-\infty,\infty)}+N(p,q)a^{\#(\theta)}_{\kappa}\cdot\|Mu^n_{xx}\|^p_{\bL_{p,\theta}(\infty)}.
$$
 It follows that
\begin{eqnarray*}
\|Mu_{xx}\|^p_{\bL_{p,\theta}(-\infty,\infty)}&\leq& N\varepsilon^p(\|M^{-1}u\|^p_{\bL_{p,\theta}(-\infty,\infty)}+\|u_x\|^p_{\bL_{p,\theta}(-\infty,\infty)})\\
&\;&+\;N a^{\#(\theta)}_{\kappa}\|Mu_{xx}\|^p_{\bL_{p,\theta}(-\infty,\infty)}+\varepsilon^p a^{\#(\theta)}_{\kappa} \|M^{-1}u\|^p_{\bL_{p,\theta}(-\infty,\infty)}\\
&\;&+\;\varepsilon^p a^{\#(\theta)}_{\kappa} \|u_x\|^p_{\bL_{p,\theta}(-\infty,\infty)})+N\|Mf\|^p_{\bL_{p,\theta}(-\infty,\infty)}.
\end{eqnarray*}
Since  $\|M^{-1}u\|_{L_{p,\theta}}+\|u_x\|_{L_{p,\theta}}\leq N\|Mu_{xx}\|_{L_{p,\theta}}$, we get (i)  if  $\varepsilon$ is sufficiently small.

 (ii) Now let $\text{supp}\, \eta_n\subset Q_n=\cQ_{\kappa_0}(t_0,x^1_0,x'_0)$.  Note that $u\eta_n=0$ if $Q_n \not\in Q_{\frac{1+\kappa_0}{1-\kappa_0}R,\kappa_0}$. Thus in the proof of (i), we only need to consider the case $Q_n \in \bQ_{\frac{1+\kappa_0}{1-\kappa_0}R,\kappa_0}$. Therefore  (ii) follows from Corollary \ref{corollary 10.09.1}(ii) and  the proof of (i).
\end{proof}

\mysection{$L_p$-theory  on $\bR^d_+$}
                   \label{section half spaces}

\begin{definition}\label{md}
Let $-\infty\leq S<T\leq \infty$. We write $u\in \frH^{\gamma+2}_{p,\theta}(S,T)$ if $u\in
M\bH^{\gamma+2}_{p,\theta}(S,T)$, $u(S,\cdot)\in U^{\gamma+2}_{p,\theta}$ ($u(-\infty,\cdot):=0$ if $S=-\infty$),
and for some $\tilde f\in M^{-1}\bH^{\gamma}_{p,\theta}(S, T)$ it holds
that for any $\phi\in C^{\infty}_0(\mathbb{R}^d)$
\begin{equation}\label{e}
(u(t,\cdot),\phi)= (u(S,\cdot),\phi)+ \int^t_S( \tilde
f(s,\cdot),\phi)ds,\quad t\in(S,T).
\end{equation}
In this case we write $u_t=\tilde{f}$.
The norm in $\frH^{\gamma+2}_{p,\theta}(S,T)$ is defined by
$$
\|u\|_{\frH^{\gamma+2}_{p,\theta}(S,T)}=\|M^{-1}u\|_{\bH^{\gamma+2}_{p,\theta}(S,T)}+\|Mu_t\|_{\bH^{\gamma}_{p,\theta}(S,T)} +\|u(S,\cdot)\|_{U^{\gamma+2}_{p,\theta}}.
$$
Define $\frH^{\gamma+2}_{p,\theta}(T):=\frH^{\gamma+2}_{p,\theta}(0,T)$, $\frH^{\gamma+2}_{p,\theta}:=\frH^{\gamma+2}_{p,\theta}(0,\infty)$
and $\frH^{\gamma+2}_{p,\theta,0}(T):=\frH^{\gamma+2}_{p,\theta}(T)\cap \{u:u(0)=0\}$.
\end{definition}

\begin{theorem}
                      \label{banach}
(i) The space $\frH^{\gamma+2}_{p,\theta}(S,T)$ is a Banach space.

(ii) If $T<\infty$, then   for any $u\in \frH^{\gamma+2}_{p,\theta,0}(T)$,
$$
\sup_{t\leq T}\|u(t)\|^p_{H^{\gamma+1}_{p,\theta}}\leq N(d,p,\theta,T)\|u\|^p_{\frH^{\gamma+2}_{p,\theta}(T)}.
$$
In particular, for any $t\leq T$,
\begin{equation}
              \label{eqn 12.13.9}
\|u\|^p_{\bH^{\gamma+1}_p(T)}\leq \int^T_0 \sup_{r\leq s}\|u(r)\|^p_{H^{\gamma+1}_{p,\theta}}ds\leq N\int^t_0\|u\|^p_{\frH^{\gamma+2}_{p,\theta}(s)}ds.
\end{equation}
(iii) For any nonnegative integer $n \geq \gamma+2$, the set
$$
\frH^{\gamma+2}_{p,\theta}(T) \bigcap \bigcup_{k=1}^{\infty} C([0,T],C^n_0(G_k))
$$
where $G_k=(1/k,k)\times \{|x'|<k\}$ is dense in $\frH^{\gamma+2}_{p,\theta}(T)$.
\end{theorem}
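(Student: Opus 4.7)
The proof splits naturally into three parts.

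For (i), completeness of $\frH^{\gamma+2}_{p,\theta}(S,T)$ is a standard exercise. Given a Cauchy sequence $\{u_n\}$, the three summands in the norm force $M^{-1}u_n$, $Mu_{nt}$, and $u_n(S,\cdot)$ to be Cauchy in the Banach spaces $\bH^{\gamma+2}_{p,\theta}(S,T)$, $\bH^{\gamma}_{p,\theta}(S,T)$, and $U^{\gamma+2}_{p,\theta}$, respectively. Denote their limits by $v$, $g$, and $u_0$, set $u:=Mv$, and verify that $u(S,\cdot)=u_0$ together with $u_t=M^{-1}g$ hold in the distributional sense of (\ref{e}) by passing to the limit in the identity written for $u_n$ and tested against a $\phi\in C^{\infty}_0(\bR^d_+)$; continuity of the pairings on the relevant Bessel potential spaces makes the passage routine.

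For (iii), given $u\in\frH^{\gamma+2}_{p,\theta}(T)$, I first truncate in space by multiplying by $\zeta_k(x^1)\chi_k(x')$, with $\zeta_k\in C^{\infty}_0((1/(2k),2k))$ equal to $1$ on $[1/k,k]$ and $\chi_k\in C^{\infty}_0(B'_{2k})$ equal to $1$ on $B'_k$. By Lemma~\ref{lemma 1}(iii), multiplication by such a cutoff is, for each fixed $k$, a bounded operator on every Bessel potential space entering $\|\cdot\|_{\frH^{\gamma+2}_{p,\theta}(T)}$, so $u\zeta_k\chi_k\in\frH^{\gamma+2}_{p,\theta}(T)$ and dominated convergence component-by-component gives $u\zeta_k\chi_k\to u$. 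A standard mollification in $t$ and in $x$ (possible because the truncation keeps us uniformly bounded away from $\partial\bR^d_+$) then upgrades the regularity to $C^n$ in $x$ and continuous in $t$, remaining continuous on $\frH^{\gamma+2}_{p,\theta}(T)$ by Lemma~\ref{lemma 1}(i).

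For (ii), by the density established in (iii) it is enough to prove the estimate for smooth $u$ compactly supported in some $G_k$. The natural trace space for the parabolic scale $\frH^{\gamma+2}_{p,\theta}(T)$ is precisely $U^{\gamma+2}_{p,\theta}=M^{1-2/p}H^{\gamma+2-2/p}_{p,\theta}$, and the associated trace estimate $\sup_{t\le T}\|u(t)\|_{U^{\gamma+2}_{p,\theta}}\le N\|u\|_{\frH^{\gamma+2}_{p,\theta}(T)}$ is obtained in the spirit of Lemma~\ref{lemma 1111}, whose proof extends from integer $n$ to arbitrary $\gamma$ since only the Banach structure of $H^{\gamma}_{p,\theta}$ and absolute continuity in $t$ are used. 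Under the standing condition $\theta\in(d-1,d-1+p)$ the embedding $U^{\gamma+2}_{p,\theta}\hookrightarrow H^{\gamma+1}_{p,\theta}$ holds: indeed Lemma~\ref{lemma 1}(iv) rewrites $\|u\|_{U^{\gamma+2}_{p,\theta}}\sim\|u\|_{H^{\gamma+2-2/p}_{p,\theta-p+2}}$, and the Sobolev-style trade of $2/p$ extra derivatives against the $2-p$ shift in the weight parameter yields the required inclusion. Combining produces the sup estimate, and (\ref{eqn 12.13.9}) then follows by integrating the sup in time.

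The main obstacle is the second step of (ii): the available controls of $u$ and $u_t$ live in \emph{different} weight classes ($M^{-1}u\in\bH^{\gamma+2}_{p,\theta}$ but $Mu_t\in\bH^{\gamma}_{p,\theta}$), so a direct Lions--Magenes or Lemma~\ref{lemma 1111} argument in a fixed scale is unavailable. Handling this via the $M$-multiplier isomorphisms in Lemma~\ref{lemma 1}(iv) combined with the spatial interpolation of Lemma~\ref{lemma 1}(vii), and verifying in particular that the constant in the final sup bound does \emph{not} depend on the truncation parameter $k$ from (iii), is the technically delicate step.
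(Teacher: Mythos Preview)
The paper itself does not give a self-contained proof: it simply refers to Theorems~2.9 and~2.11 of \cite{KL2} (with a remark that the restriction $p\ge2$ there, inherited from \cite{Kr99}, can be dropped in the deterministic setting). So there is no ``paper's approach'' to compare against beyond that citation. Your arguments for (i) and (iii) are correct and are essentially the standard ones.

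For (ii) there is a genuine gap, and you have in fact flagged it yourself. Your plan is (a) a trace estimate $\sup_t\|u(t)\|_{U^{\gamma+2}_{p,\theta}}\le N\|u\|_{\frH^{\gamma+2}_{p,\theta}(T)}$ and then (b) an embedding $U^{\gamma+2}_{p,\theta}\hookrightarrow H^{\gamma+1}_{p,\theta}$. Step (b) is false in general: for $p<2$ one has $\gamma+2-2/p<\gamma+1$, so $U^{\gamma+2}_{p,\theta}$ has strictly lower spatial regularity than $H^{\gamma+1}_{p,\theta}$ and no weight shift can recover the missing derivative; yet the theorem is asserted for all $p\in(1,\infty)$. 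Even for $p>2$ your justification (``Sobolev-style trade of $2/p$ derivatives against the $2-p$ weight shift'') does not correspond to any of the isomorphisms in Lemma~\ref{lemma 1}, and the required inequality $\|M^{1-2/p}v\|_{H^{\gamma+1}_{p,\theta-p+2}}\le N\|v\|_{H^{\gamma+2-2/p}_{p,\theta-p+2}}$ fails because $M^{1-2/p}$ is unbounded as $x^1\to\infty$. Step (a) is also not obtainable ``in the spirit of Lemma~\ref{lemma 1111}'': that lemma needs $u$ and $u_t$ in the \emph{same} space, whereas here $M^{-1}u\in\bH^{\gamma+2}_{p,\theta}$ and $Mu_t\in\bH^{\gamma}_{p,\theta}$ differ by two powers of $M$, exactly the mismatch you note at the end. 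The route taken in \cite{KL2,Kr99} is different: one passes through the dyadic decomposition $u_n(t,x)=\zeta(x)u(t,e^nx)$ (as in the definition (\ref{def weight})), applies the unweighted parabolic trace/embedding theorem to each $u_n$ on a fixed slab (where $u_n$ and $\partial_t u_n$ do sit in compatible unweighted Sobolev scales after scaling), and then sums in $n$; alternatively one reduces to the case $u_t=\Delta u+f$ via Lemma~\ref{lem constant} and uses explicit heat-semigroup estimates. Either way, the $T$-dependence of the constant in (ii) is essential and is not captured by a pure spatial embedding.
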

\begin{proof}
See Theorem 2.9 and Theorem 2.11 of \cite{KL2}. Actually in \cite{KL2}, (i) is proved only for $p\geq 2$ based on Theorems 4.2 and 7.2 in \cite{Kr99}. But by inspecting the proofs of Theorems 4.2 and 7.2 in \cite{Kr99} one can easily check that in our (deterministic) case the result holds for all $p>1$.
\end{proof}

\begin{remark}
                \label{remark 12.13.1}
It is easy to check that any function  $u\in \frH^{2}_{p,\theta}(-\infty,\infty)$ can be approximated by functions in $C^{\infty}_0(\Omega)$. Thus  Lemma \ref{lemma 09.10.6} holds for any $u\in \frH^{2}_{p,\theta}(-\infty,\infty)$.
\end{remark}

Here are  some interior H\"older estimates of functions in the space $\frH^{\gamma+2}_{p,\theta}(T)$.
\begin{theorem}
                      \label{thm interior}
  Let $p>2$ and assume
  $$
  2/p<\alpha<\beta\leq 1, \quad \gamma+2-\beta-d/p=k+\varepsilon,
  $$
  where $k\in \{0,1,2,\cdots\}$ and $\varepsilon \in (0,1]$. Denote $\delta=\beta-1+\theta/p$. Then for any $u\in \frH^{\gamma+2}_{p,\theta}(T)$ and multi-indices $i,j$ such that
  $|i|\leq j$ and $|j|=k$,

  (i) the functions $D^iu(t,x)$ are continuous in $[0,T]\times \bR^d_+$ and
  $$
  M^{\delta+|i|}D^iu(t,\cdot)- M^{\delta+|i|}D^iu(0,\cdot)   \in C^{\alpha/2-1/p}([0,T], C(\bR^d_+));
  $$

  (ii) there exists a constant $N=N(p,d,\alpha,\beta)$ so that
  \begin{equation}
                 \label{eqn 4.24.1}
  \sup_{t,s\leq T}    \left(\frac{\big|M^{\delta+|i|}D^i(u(t)-u(s))\big|_{C(\bR^d_+)}}{|t-s|^{\alpha/2-1/p}}  +
  \frac{\big[M^{\delta+|j|+\varepsilon}D^j(u(t)-u(s))\big]_{C^{\varepsilon}}}{|t-s|^{\alpha/2-1/p}}   \right)
  \leq NT^{(\beta-\alpha)/2} \|u\|_{\frH^{\gamma+2}_{p,\theta}(T)}.
  \end{equation}
  \end{theorem}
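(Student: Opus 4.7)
The plan is to derive (\ref{eqn 4.24.1}) by combining the weighted spatial Sobolev embedding of Lemma \ref{lemma 1}(ii) with an abstract Bochner--Sobolev embedding in time. Since the time-independent piece $v(t,x):=u(0,x)$ contributes nothing to the time-difference terms in (\ref{eqn 4.24.1}) and its spatial bound follows from the direct embedding of $U^{\gamma+2}_{p,\theta}=M^{1-2/p}H^{\gamma+2-2/p}_{p,\theta}$ into the target weighted H\"older class (via Lemma \ref{lemma 1}(ii), (iv)), I would first split off $v$ and reduce to $u(0,\cdot)=0$, so that $u(t)=\int_0^t u_s\,ds$ in the weakest available space.

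The spatial building block is the observation that the smoothness $\gamma+2-\beta = k+\varepsilon+d/p$ is precisely the exponent needed in Lemma \ref{lemma 1}(ii), and that the weight $\theta':=\theta-(1-\beta)p$ satisfies $\theta'/p=\beta-1+\theta/p=\delta$. Thus any bound on $\|w\|_{H^{\gamma+2-\beta}_{p,\theta'}}$ produces, via Lemma \ref{lemma 1}(ii) with $m=k$, $\nu=\varepsilon$,
\[
|M^{|i|+\delta}D^iw|_{C} + [M^{k+\varepsilon+\delta}D^jw]_{C^{\varepsilon}} \le N\,\|w\|_{H^{\gamma+2-\beta}_{p,\theta'}},\qquad |i|\le k = |j|,
\]
so the whole task reduces to estimating $u(t)-u(s)$ in $H^{\gamma+2-\beta}_{p,\theta'}$ with the prescribed $|t-s|$-decay.

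For this I would apply the standard parabolic trace/real-interpolation embedding: with $X_0 = H^{\gamma}_{p,\theta+p}$ (which carries $u_t$ since $Mu_t\in\bH^{\gamma}_{p,\theta}$, cf.\ Lemma \ref{lemma 1}(iv)) and $X_1 = H^{\gamma+2}_{p,\theta-p}$ (which carries $u$ since $M^{-1}u\in\bH^{\gamma+2}_{p,\theta}$), the trace estimate $\|u(t)\|_{(X_0,X_1)_{1-1/p,p}}\le N\|u\|_{\frH^{\gamma+2}_{p,\theta}(T)}$ (in the spirit of Theorem \ref{banach}(ii)) combined with the elementary bound $\|u(t)-u(s)\|_{X_0}\le N|t-s|^{1-1/p}\|u\|_{\frH^{\gamma+2}_{p,\theta}(T)}$ and reiteration of real interpolation yield
\[
\|u(t)-u(s)\|_{(X_0,X_1)_{1-\beta/2,p}} \le N|t-s|^{\beta/2-1/p}\|u\|_{\frH^{\gamma+2}_{p,\theta}(T)},
\]
valid because $p>2$ and $\beta>2/p$ make $\beta/2-1/p>0$ and $1-\beta/2<1-1/p$. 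Composing with the spatial embedding of the previous paragraph, and using the elementary trade $|t-s|^{\beta/2-1/p}\le T^{(\beta-\alpha)/2}|t-s|^{\alpha/2-1/p}$, yields (\ref{eqn 4.24.1}); part (i) then follows from the resulting H\"older-in-$t$ bound with values in the weighted H\"older-in-$x$ class.

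The main obstacle is the identification of the real interpolation space $(X_0,X_1)_{1-\beta/2,p}$ with (a close relative of) $H^{\gamma+2-\beta}_{p,\theta'}$: Lemma \ref{lemma 1}(vii) gives only the multiplicative (log-convexity) direction of this identification. The cleanest workaround is to exploit the dyadic decomposition $\{\zeta(e^n\cdot)\}$ built into (\ref{def weight}): on each annular strip the parabolic scaling $(t,x)\mapsto(e^{2n}t,e^nx)$ converts the weighted norms to unweighted Bessel-potential norms on $\bR^{d+1}$, where the real interpolation between $H^{\gamma}_p$ and $H^{\gamma+2}_p$ is classically the Besov space $B^{\gamma+2-\beta}_{p,p}$; the Besov--Bessel gap between this and $H^{\gamma+2-\beta}_p$ can then be absorbed by replacing $\beta$ with $\beta-\varepsilon'$ for a small $\varepsilon'>0$, which is permitted thanks to the strict inequality $\alpha<\beta$, and resumming over $n$ recovers the weighted statement with the same structure.
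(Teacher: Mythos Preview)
The paper does not prove this theorem; it simply cites Theorem 4.7 of \cite{Kr01} (Krylov, 2001). So there is no ``paper's own proof'' to compare against --- the result is imported wholesale.

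Your sketch is a reasonable outline of how such a result is actually established, and the final workaround you propose --- passing through the dyadic partition $\{\zeta(e^n\cdot)\}$ to reduce to unweighted Bessel-potential spaces on $\bR^d$, where the parabolic time--space embedding is classical, and then resumming --- is in fact the mechanism Krylov uses in \cite{Kr01}. Your identification of the main obstacle is accurate: Lemma \ref{lemma 1}(vii) gives only the $J$-type inequality $(X_0,X_1)_{\sigma,1}\hookrightarrow H^{\gamma+2-\beta}_{p,\theta'}$, while your trace/reiteration argument places $u(t)-u(s)$ only in $(X_0,X_1)_{1-\beta/2,p}$; moreover the couple $(H^{\gamma}_{p,\theta+p},H^{\gamma+2}_{p,\theta-p})$ is not ordered, so the usual monotonicity in the second interpolation index is unavailable. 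The dyadic reduction sidesteps both issues because on each localized piece the weights become harmless constants and the couple becomes ordered. The $\varepsilon'$-loss you mention (trading $\beta$ for $\beta-\varepsilon'$ to pass from Besov $B^{\gamma+2-\beta}_{p,p}$ to Bessel-potential $H^{\gamma+2-\beta}_p$) is exactly compensated by the strict inequality $\alpha<\beta$, as you note. So your plan is sound, with the caveat that carrying out the dyadic argument carefully (in particular controlling the time scaling $t\mapsto e^{2n}t$ uniformly in $n$ and handling the initial-data piece via the trace space $U^{\gamma+2}_{p,\theta}$) is where the real work lies, and that is what \cite{Kr01} supplies.
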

  \begin{proof}
  See Theorem 4.7 of \cite{Kr01}.
  \end{proof}

  Throughout this section we assume the following.

\begin{assumption}
                     \label{main assumption2}
 There exist  constants
$\delta,K>0$ so that
\begin{equation}
                    \label{assumption 111}
\delta|\xi|^2\leq a^{ij}(t,x) \xi^i\xi^j \leq K|\xi|^2, \quad \forall \xi \in \bR^d.
\end{equation}

\end{assumption}

\begin{theorem}
                                       \label{main theorem}
Let $p\in (1,\infty)$,  $\theta\in (d-1, d-1+p)$ and $T\in (0, \infty]$. Take $\kappa_0\in (0,1)$ from Lemma \ref{lemma 09.10.6}. Assume
that there exists a constant $\beta>0$ so that
\begin{equation}
                  \label{eqn 3.18.1}
|x^1b^i|+|(x^1)^2c|\leq \beta, \quad \forall t,x.
\end{equation}
(i) Then
there exists constants $\varepsilon_0, \beta_0>0$ depending only on  $d,p,\theta,\delta$ and $K$ so that if $a^{\#(\theta)}_{\kappa_0}<\varepsilon_0$ and $\beta\leq \beta_0$ then for any
  $f\in M^{-1}\bL_{p,\theta}(T)$ and $u_0\in
U^{2}_{p,\theta}$  the equation
\begin{equation}
                      \label{eqn 12.09.1}
u_t=a^{ij}u_{x^ix^j}+b^iu_{x^i}+cu+f, \quad u(0)=u_0
\end{equation}
 admits a
unique solution $u\in \frH^{2}_{p,\theta}(T)$, and for this solution we have
\begin{equation}
                        \label{a priori 12.12}
\|u\|_{\frH^{2}_{p,\theta}(T)}\leq
N\left(\|Mf\|_{\bL_{p,\theta}(T)}+\|u_0\|_{U^{2}_{p,\theta}}\right),
\end{equation}
where $N=N(p,\theta,\delta_0,K)$.

(ii)  Let $u\in \frH^{2}_{p,\theta}(T)$ be a solution of equation (\ref{eqn 12.09.1}) and $u(t,x)=0$  whenever $x^1 \geq R$. Then the  estimate (\ref{a priori 12.12}) holds true if $a^{\#(\theta)}_{R_{\kappa_0}, \kappa_0}<\varepsilon_0$, where $R_{\kappa_0}:=2R(1+\kappa_0)/(1-\kappa_0)$.
\end{theorem}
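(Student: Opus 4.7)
The plan is to establish the a priori estimate first (which simultaneously yields uniqueness) and then derive existence by the method of continuity starting from the heat equation. I begin by reducing to the case $u_0=0$: using Lemma \ref{lem constant} applied to $v_t=\Delta v$ with $v(0)=u_0$, one obtains $v\in\frH^{2}_{p,\theta}(T)$ satisfying $\|v\|_{\frH^{2}_{p,\theta}(T)}\le N\|u_0\|_{U^{2}_{p,\theta}}$, so that $w:=u-v$ satisfies (\ref{eqn 12.09.1}) with zero initial data and source $\tilde f:=f+(a^{ij}-\delta^{ij})v_{x^ix^j}+b^iv_{x^i}+cv$, whose weighted norm is controlled by $\|Mf\|_{\bL_{p,\theta}(T)}+N\|u_0\|_{U^{2}_{p,\theta}}$ via (\ref{assumption 111}) and (\ref{eqn 3.18.1}).

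For the a priori estimate on $w\in\frH^{2}_{p,\theta,0}(T)$, I extend $w$ by zero to $t<0$, producing (since $w(0)=0$) an element of $\frH^{2}_{p,\theta}(-\infty,T)$ that still satisfies the equation. Because the proofs of Lemmas \ref{lemma 10.09.1} and \ref{lemma 09.10.6} are time-translation invariant, Lemma \ref{lemma 09.10.6}(i) adapts to $(-\infty,T)$ and yields
\[
\|Mw_{xx}\|^{p}_{\bL_{p,\theta}(T)}\le N\bigl(\|M\bar f\|^{p}_{\bL_{p,\theta}(T)}+a^{\#(\theta)}_{\kappa_0}\|Mw_{xx}\|^{p}_{\bL_{p,\theta}(T)}\bigr),
\]
with $\bar f:=\tilde f+b^iw_{x^i}+cw$. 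By (\ref{eqn 3.18.1}) one has $|M(b^iw_{x^i}+cw)|\le\beta(|w_x|+|M^{-1}w|)$, and by Lemma \ref{lemma 1}(vi) (applicable because $\theta\in(d-1,d-1+p)$), $\|w_x\|_{L_{p,\theta}}+\|M^{-1}w\|_{L_{p,\theta}}\le N\|Mw_{xx}\|_{L_{p,\theta}}$. Together these give
\[
\|Mw_{xx}\|^{p}_{\bL_{p,\theta}(T)}\le N\|Mf\|^{p}_{\bL_{p,\theta}(T)}+N\|u_0\|^{p}_{U^{2}_{p,\theta}}+N\bigl(a^{\#(\theta)}_{\kappa_0}+\beta^{p}\bigr)\|Mw_{xx}\|^{p}_{\bL_{p,\theta}(T)},
\]
and choosing $\varepsilon_0,\beta_0$ so that $N(\varepsilon_0+\beta_0^{p})\le 1/2$ absorbs the last term and establishes (\ref{a priori 12.12}) for $w$, whence for $u$.

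Existence then follows by the method of continuity applied to $L_\lambda:=(1-\lambda)\Delta+\lambda(a^{ij}D_iD_j+b^iD_i+c)$, $\lambda\in[0,1]$. Each $L_\lambda$ satisfies (\ref{assumption 111}) and (\ref{eqn 3.18.1}) with the same constants (after possibly enlarging $K$), so the a priori estimate above holds uniformly in $\lambda$. The set $\Lambda\subset[0,1]$ of $\lambda$ for which $u_t=L_\lambda u+f$, $u(0)=u_0$ is uniquely solvable in $\frH^{2}_{p,\theta}(T)$ contains $\lambda=0$ by Lemma \ref{lem constant} and, thanks to the uniform bound together with the standard perturbation argument, is both open and closed; hence $\Lambda=[0,1]$ and solvability at $\lambda=1$ follows. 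For part (ii) the argument is identical except that at the a priori step I invoke Lemma \ref{lemma 09.10.6}(ii) rather than (i), replacing $a^{\#(\theta)}_{\kappa_0}$ by $a^{\#(\theta)}_{R_{\kappa_0},\kappa_0}$; the support condition $u(t,x)=0$ for $x^1\ge R$ is preserved under the zero extension in $t$.

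The main obstacle I anticipate is justifying that Lemma \ref{lemma 09.10.6}, stated and proved on $(-\infty,\infty)$, transfers cleanly to $(-\infty,T)$ and applies to a solution whose equation holds only for $t<T$. The partition of unity from Lemma \ref{lemma 10.09.1} is purely spatial and the maximal/sharp function estimates (Theorems \ref{FS}--\ref{HL} and Corollary \ref{corollary 10.09.1}) apply over any time interval, so the transfer is essentially bookkeeping; still, one must check that each cutoff $w\eta_k$ solves the localized equation up to $t=T$, which it does because no boundary term at $t=T$ arises in Definition \ref{md}. A secondary subtlety is that the estimate (\ref{a priori 12.12}) must be $T$-independent when $T=\infty$; this is automatic since the argument proceeds by absorption rather than any Grönwall-type iteration.
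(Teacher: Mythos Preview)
Your overall strategy---reduce to zero initial data, establish the a priori estimate by extending to negative times and invoking Lemma \ref{lemma 09.10.6}, absorb the lower-order terms via (\ref{eqn 3.18.1}) and Lemma \ref{lemma 1}(vi), then close by the method of continuity from Lemma \ref{lem constant}---matches the paper's, and your treatment of part (ii) via Lemma \ref{lemma 09.10.6}(ii) is exactly right.

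The one real gap is your handling of finite $T$. You claim Lemma \ref{lemma 09.10.6} ``adapts to $(-\infty,T)$'' and call this bookkeeping, but your stated justification contains an error: the partition of unity in Lemma \ref{lemma 10.09.1} is \emph{not} purely spatial---the functions $\eta_{nk\ell}$ carry time cutoffs $\hat\tau_\ell(e^{2n}t)$. More substantively, the pointwise sharp-function bound feeding Corollary \ref{corollary 10.09.1} runs over parabolic cylinders $\cQ_r(t_1,a,x'_1)=(t_1,t_1+r^2)\times\cB_r$ that may protrude past $t=T$, and on that portion you have no equation. Theorems \ref{FS}--\ref{HL} are stated on all of $\bR\times\bR^d_+$; transferring them to $(-\infty,T)\times\bR^d_+$ together with a compatible one-sided sharp function is possible, but it is a genuine argument, not bookkeeping.

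The paper sidesteps this entirely. It first proves the case $T=\infty$ (your zero-extension to $t<0$ then works verbatim, since the equation now holds on the full line and Lemma \ref{lemma 09.10.6} applies directly). For $T<\infty$ it does \emph{not} localize the maximal-function machinery in time; instead it extends the solution forward: given $u$ on $(0,T)$, it sets $a^{ij}_T=a^{ij}I_{t\le T}+\delta^{ij}I_{t>T}$, solves an auxiliary problem on $(0,\infty)$ that is shown (via Lemma \ref{lem constant}) to coincide with $u$ on $[0,T]$, and then invokes the $T=\infty$ estimate. This yields the finite-$T$ bound with the same ($T$-independent) constant and no new harmonic-analysis input. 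Replacing your $(-\infty,T)$ adaptation by this extension-in-time step would close your proof.
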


\begin{remark}
It is known (see Remark 3.6 of \cite{KL2}) that if $\theta\not \in (d-1,d-1+p)$, then  Theorem \ref{main theorem} is not true even for the heat equation $u_t=\Delta u+f$.
\end{remark}

{\bf{Proof of Theorem \ref{main theorem}}}.
As usual, we assume $u_0=0$ (see the proof of Theorem 5.1 in \cite{Kr99}). Take $N=N(d,p,\theta,\delta,K,\kappa_0)$ from (\ref{eqn 09.10.5}) and assume that $a^{\#(\theta)}_{\kappa_0}< \varepsilon_0:=1/{(2N)}$.

{\bf{Case 1}}. Let $T=\infty$ and $b^i=c=0$.   Due to Lemma \ref{lem constant} and the method of continuity, we only prove that estimate (\ref{a priori 12.12}) holds given that a solution $u\in \frH^{2}_{p,\theta}(T)$ already exists.

Define $v(t,x)=u(t,x)I_{t>0}$ and $\bar{f}=fI_{t>0}$, then  $v\in M^{-1}\bH^{2}_{p,\theta}(-\infty,\infty)$ and $v$ satisfies (see (\ref{e}))
$$
v_t=a^{ij}u_{x^ix^j}+\bar{f}, \quad (t,x)\in \bR^{d+1}_+.
$$
By Lemma \ref{lemma 09.10.6}  and Remark \ref{remark 12.13.1},
$$
\|Mv_{xx}\|_{\bL_{p,\theta}(-\infty,\infty)}\leq N\|Mf\|_{\bL_{p,\theta}(\infty)}.
$$
This certainly proves (\ref{a priori}).

{\bf{Case 2}}. \quad Let $T<\infty$ and $b^i=c=0$. The existence of solutions in $\frH^{\gamma+2}_{p,\theta}(T)$ is an  easy consequence of Case 1. Now suppose that $u\in \frH^{\gamma+2}_{p,\theta}(T)$ is a solution of (\ref{eqn 12.09.1}).
By  the result of Case 1, the equation
\begin{equation}
                       \label{eqn 3.07.1}
v_t=\Delta v +(a^{ij}u_{x^ix^j}+f-\Delta u)I_{t\leq T},  \quad t>0\,; \quad v(0,\cdot)=0
\end{equation}
has a unique solution $v\in \frH^{\gamma+2}_{p,\theta}(0,\infty)$. Then $v-u$ satisfies
$$
(v-u)_t=\Delta(v-u), \quad t\in (0,T)\,; \quad (v-u)(0,\cdot)=0.
$$
If follows from  Lemma \ref{lem constant} that $u=v$ for $t\in [0,T]$.  For $t\geq 0$, define
$$
 a^{ij}_{T}=a^{ij}I_{t\leq T}+ \delta^{ij} I_{t>T}.
$$
Then (\ref{eqn 3.07.1}) and the fact $u=v$ for $t\in [0,T]$ show that $v$ satisfies (replace $u$ by $v$ for $t\leq T$ in (\ref{eqn 3.07.1}))
\begin{equation}
                           \label{eqn 4.25.1}
v_t=a^{ij}_Tv_{x^ix^j}+fI_{t<T}, \quad t>0\,;\,\, v(0,\cdot)=0.
\end{equation}
By Case 1, $v\in \frH^{\gamma+2}_{p,\theta}(\infty)$ is the unique solution of (\ref{eqn 4.25.1}), and $u=v$ on $[0,T]$ whenever $u$ is a solution of  (\ref{eqn 12.09.1}) on $[0,T]$.
This obviously yields the uniqueness.

{\bf{Case 3}}. General case.   Again we only prove that there exists $\beta_0$ so that if $a^{\#(\theta)}_{\kappa_0}<\varepsilon_0$ and $\beta\leq \beta_0$ then estimate (\ref{a priori 12.12}) holds given that a solution $u\in \frH^{2}_{p,\theta}(T)$ already exists.  Obviously by the results of Case 1 and 2,
\begin{eqnarray*}
\|M^{-1}u\|_{\bH^2_{p,\theta}(T)} &\leq& N\|M(b^iu_{x^i}+cu+f)\|_{\bL_{p,\theta}(T)}\\
&\leq& N \sup |x^1 b^i|\|u_x\|_{\bL_{p,\theta}(T)}+N\sup |(x^1)^2 M^{-1}u\|_{\bL_{p,\theta}(T)}+N \|Mf\|_{\bL_{p,\theta}(T)}\\
&\leq&N \beta \|M^{-1}u\|_{\bH^2_{p,\theta}(T)} + N \|Mf\|_{\bL_{p,\theta}(T)}.
\end{eqnarray*}
Thus it is enough to take $\beta_0$ so that $N\beta_0\leq 1/2$. The theorem is proved. \hspace{3cm}$\Box$

%
%

\mysection{$L_p$-theory  on bounded $C^1$ domains}
                                    \label{section domains}

\begin{assumption}
                                         \label{assumption domain}

The domain $\cO$  is of class $C^{1}_{u}$. In other words, there exist constants $r_0, K_0\in(0,\infty)$ so that for any
$x_0 \in \partial \cO$ there exists  a one-to-one continuously differentiable mapping $\Psi$ of
 $B_{r_0}(x_0)$ onto a domain $J\subset\bR^d$ such that

(i) $J_+:=\Psi(B_{r_0}(x_0) \cap \cO) \subset \bR^d_+$ and
$\Psi(x_0)=0$;

(ii)  $\Psi(B_{r_0}(x_0) \cap \partial \cO)= J \cap \{y\in
\bR^d:y^1=0 \}$;

(iii) $\|\Psi\|_{C^{1}(B_{r_0}(x_0))}  \leq K_0 $ and
$|\Psi^{-1}(y_1)-\Psi^{-1}(y_2)| \leq K_0 |y_1 -y_2|$ for any $y_i
\in J$;

(iv)   $\Psi_{x}$ is uniformly continuous in for $B_{r_{0}}(x_{0})$.
\end{assumption}

To proceed further we introduce some well known results from
\cite{GH} and \cite{KK2} (see also \cite{La} for the details).
Denote $\rho(x):=\text{dist}(x,\partial \cO)$.

\begin{lemma}
                                           \label{lemma 10.3.1}
Let the domain $\cO$ be of class $C^{1}_{u}$. Then

(i) there is a bounded real-valued function $\psi$ defined in
$\bar{\cO} $  such that the functions $\psi(x)$ and
$\rho(x)$ are comparable. In other words,  $N^{-1}\rho(x) \leq
\psi(x) \leq N\rho(x)$ with some constant
 $N$ independent of $x$,

 (ii) for any  multi-index $\alpha$,
\begin{equation}
                                                             \label{03.04.01}
\sup_{\cO} \psi ^{|\alpha|}(x)|D^{\alpha}\psi_{x}(x)| <\infty.
\end{equation}

\end{lemma}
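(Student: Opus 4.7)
My approach is to construct $\psi$ as a regularized distance function, a classical construction going back to Stein (\emph{Singular Integrals}, Chapter VI). The function $\rho$ itself is Lipschitz but not smoother in general, so one needs to smooth it while preserving comparability and obtaining derivative estimates of all orders.

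The plan in outline is as follows. First, I cover $\bar{\cO}$ by an interior open set $\cO_0 = \{x \in \cO : \rho(x) > r_0/3\}$ together with finitely many boundary balls $B_{r_0/2}(x_k)$, $k=1,\ldots,M$, supplied by Assumption \ref{assumption domain}; finiteness uses that $\cO$ is bounded. On the interior piece set $\psi_0(x) := r_0/3$, which is trivially smooth and comparable to $\rho$ on $\cO_0$. In each boundary ball I have a chart $\Psi_k$ flattening $\partial \cO$. The pullback $(y^1 \circ \Psi_k)(x)$ is comparable to $\rho(x)$ on $B_{r_0/2}(x_k) \cap \cO$ by the bi-Lipschitz property of $\Psi_k$, but only $C^1$. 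I mollify it at scale proportional to $\rho$: fixing $\eta\in C^\infty_0(B_1(0))$ with unit integral and a small $\varepsilon>0$, I set
\[
\psi_k(x) := \int_{\bR^d}(y^1\circ\Psi_k)(x-\varepsilon\rho(x)z)\,\eta(z)\,dz.
\]
A standard scaling argument then yields $\psi_k \sim \rho$ together with the bounds $|D^\alpha \psi_k(x)| \leq N_\alpha\,\rho(x)^{1-|\alpha|}$ for every multi-index $\alpha$.

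Finally, I paste the local pieces by a smooth partition of unity $\{\zeta_k\}_{k=0}^M$ subordinate to the covering and define $\psi := \sum_{k=0}^M \zeta_k\psi_k$. Since $\sum_k \zeta_k \equiv 1$ and each $\psi_k \sim \rho$ on $\mathrm{supp}\,\zeta_k$, assertion (i) follows immediately; boundedness of $\psi$ on $\bar\cO$ is automatic because $\cO$ is bounded. For (ii), I apply Leibniz's rule to $D^\alpha \psi_x = \sum_k D^\alpha(\zeta_k \psi_{k,x})$, bound $|D^\beta \zeta_k|$ by a constant (the $\zeta_k$ being fixed smooth cutoffs), and use $|D^\gamma \psi_{k,x}| \leq N \rho^{-|\gamma|}$; summing over $k$ and multiplying by $\psi^{|\alpha|}\sim \rho^{|\alpha|}$ yields $\psi^{|\alpha|}|D^\alpha \psi_x|\leq N_\alpha$ uniformly on $\cO$.

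The main technical obstacle is the derivative estimate for the mollified $\psi_k$: every derivative that lands on the argument $x-\varepsilon\rho(x)z$ produces a factor involving $D\rho$, which is only $L^\infty$. However, since each such derivative can instead be transferred, via a change of variables $w = x-\varepsilon\rho(x)z$, onto the smooth mollifier $\eta$, a careful bookkeeping shows that each extra derivative on $\psi_k$ costs exactly one factor of $\rho^{-1}$. This scaling-and-mollifier argument is the heart of the Stein regularized-distance construction and is worked out in detail in \cite{GH}, \cite{KK2} and \cite{La}, so the present proof ultimately amounts to invoking those references.
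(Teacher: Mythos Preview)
The paper does not actually prove this lemma; it states it as a known result and cites \cite{GH}, \cite{KK2}, and \cite{La}. In that sense your proposal, which ultimately defers to the same references, matches the paper's treatment.

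That said, the specific construction you sketch has a gap. After your change of variables $w=x-\varepsilon\rho(x)z$ one may write $\psi_k(x)=F(x,\varepsilon\rho(x))$ with
\[
F(x,t)=t^{-d}\int g(w)\,\eta\Bigl(\frac{x-w}{t}\Bigr)\,dw,
\]
and $F$ is indeed $C^\infty$ in $(x,t)$ for $t>0$. But the chain rule then gives, schematically,
\[
D^2_x\psi_k = F_{xx}+2\varepsilon F_{xt}\,D\rho+\varepsilon^2 F_{tt}(D\rho)^{\otimes 2}+\varepsilon F_t\,D^2\rho,
\]
and the last term requires $D^2\rho$, which does not exist for a merely Lipschitz $\rho$. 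Your claim that the derivatives can be ``transferred onto $\eta$'' is therefore misleading: the kernel after the change of variables still depends on $x$ through the non-smooth factor $\rho(x)$, so each further $x$-derivative reproduces a factor of $D\rho$ and eventually demands $D^k\rho$.

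The actual constructions in the references you cite sidestep this difficulty. Gilbarg--H\"ormander define $\psi$ \emph{implicitly} by $\psi(x)=G(x,\varepsilon\psi(x))$ where $G(x,t)=\int\rho(x-tz)\eta(z)\,dz$; since $G\in C^\infty$ in $(x,t)$ and $|\varepsilon G_t|<1$ for small $\varepsilon$, the implicit function theorem bootstraps $\psi$ from continuous to $C^\infty$, and the derivative bounds follow. Stein's Whitney-decomposition route is an alternative that avoids $\rho$ in the mollification altogether. Replacing your direct mollification by either of these would make the outline correct.
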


First we introduce  Banach spaces $H^{\gamma}_{p,\theta}(\cO)$,
 which correspond to the spaces $H^{\gamma}_{p,\theta}$ on $\bR^d_+$.
 Take  $\zeta\in C^{\infty}_{0}(\bR_{+})$ satisfying (\ref{eqn 5.6.5}), which is
$$
\sum_{n=-\infty}^{\infty}\zeta(e^{n+x})>c>0, \quad \forall x\in \bR.
$$
 For $x\in \cO$ and $n\in\bZ=\{0,\pm1,...\}$
define
$$
\zeta_{n}(x)=\zeta(e^{n}\psi(x)).
$$
Then  we have $\sum_{n}\zeta_{n}\geq c$ in $\cO$ and
\begin{equation*}
\zeta_n \in C^{\infty}_0(\cO), \quad |D^m \zeta_n(x)|\leq
N(m)e^{mn}.
\end{equation*}
For $\theta,\gamma \in \bR$, let $H^{\gamma}_{p,\theta}(\cO)$ be the
set of all distributions $u$  on $\cO$ such
that
\begin{equation}
                                                 \label{10.10.03}
\|u\|_{H^{\gamma}_{p,\theta}(\cO)}^{p}:= \sum_{n\in\bZ} e^{n\theta}
\|\zeta_{-n}(e^{n} \cdot)u(e^{n} \cdot)\|^p_{H^{\gamma}_p} < \infty.
\end{equation}

It is known (see, for instance, \cite{Lo2} or \cite{KK2}) that up to equivalent
norms the space $H^{\gamma}_{p,\theta}(\cO)$ is independent of the
choice of $\zeta$ and $\psi$. Moreover if $\gamma=n$ is a
non-negative integer then
\begin{equation}
                              \label{eqn 02.09.1}
\|u\|^p_{H^{\gamma}_{p,\theta}(\cO)} \sim
\sum_{|\alpha|\leq n}\int_{\cO} |\rho^{|\alpha|}D^{\alpha}u(x)|^p
\rho^{\theta-d}(x) \,dx.
\end{equation}
Recall that if $\gamma=n$, then the space $H^{\gamma}_{p,\theta}$  is the collection of functions $u$ on $\bR^d_+$ so that
$$
\sum_{|\alpha|\leq n}\int_{\bR^d_+} |(x^1)^{|\alpha|}D^{\alpha}u(x)|^p
(x^1)^{\theta-d}(x) \,dx <\infty.
$$

Denote
$\psi(x,y)=\psi(x)\wedge \psi(y)$. For
  $n \in\bZ$, $\mu \in(0,1]$
 and $k=0,1,2,...$, define
$$
|u|_{C}=\sup_{\cO}|u(x)|, \quad [u]_{C^{\mu}}=\sup_{x\neq
y}\frac{|u(x)-u(y)|}{|x-y|^{\mu}}.
$$
\begin{equation}
                           \label{eqn 5.6.2}
[u]^{(n)}_{k}=[u]^{(n)}_{k,\cO} =\sup_{\substack{x\in \cO\\
|\beta|=k}}\psi^{k+n}(x)|D^{\beta}u(x)|,
\end{equation}
\begin{equation}
                              \label{eqn 5.6.3}
[u]^{(n)}_{k+\mu}=[u]^{(n)}_{k+\mu,\cO} =\sup_{\substack{x,y\in \cO
\\ |\beta|=k}}
\psi^{k+\mu+n}(x,y)\frac{|D^{\beta}u(x)-D^{\beta}u(y)|}
{|x-y|^{\mu}},
\end{equation}
$$
|u|^{(n)}_{k}=|u|^{(n)}_{k,\cO}=\sum_{j=0}^{k}[u]^{(n)}_{j,\cO},
\quad |u|^{(n)}_{k+\mu}=
 |u|^{(n)}_{k+\mu,\cO}=|u|^{(n)}_{k, \cO}+
[u]^{(n)}_{k+\mu,\cO}.
$$

Below we collect some other properties of spaces
$H^{\gamma}_{p,\theta}(\cO)$ taken from \cite{Lo2} (also see \cite{KK2}).

\begin{lemma}
\label{lemma collection domain}
Let $d-1<\theta<d-1+p$.

(i) Assume that $\gamma-d/p=m+\nu$ for some $m=0,1,\cdots$ and
$\nu\in (0,1]$.  Then for any $u\in H^{\gamma}_{p,\theta}(\cO)$ and $i\in
\{0,1,\cdots,m\}$, we have
$$
|\psi^{i+\theta/p}D^iu|_{C}+[\psi^{m+\nu+\theta/p}D^m
u]_{C^{\nu}}\leq c \|u\|_{ H^{\gamma}_{p,\theta}(\cO)}.
$$

(ii) Let $\alpha\in \bR$, then
$\psi^{\alpha}H^{\gamma}_{p,\theta+\alpha
p}(\cO)=H^{\gamma}_{p,\theta}(\cO)$,
$$
\|u\|_{H^{\gamma}_{p,\theta}(\cO)}\leq c
\|\psi^{-\alpha}u\|_{H^{\gamma}_{p,\theta+\alpha p}(\cO)}\leq
c\|u\|_{H^{\gamma}_{p,\theta}(\cO)}.
$$

(iii) There is a constant $c=c(d,p,\gamma,\theta)$ so that
$$
\|a f\|_{H^{\gamma}_{p,\theta}(\cO)}\leq
c|a|^{(0)}_{|\gamma|_+}|f|_{H^{\gamma}_{p,\theta}(\cO)}.
$$

(iv) $\psi D, D\psi: H^{\gamma}_{p,\theta}(\cO)\to
H^{\gamma-1}_{p,\theta}(\cO)$ are bounded linear operators, and
$$
\|u\|_{H^{\gamma}_{p,\theta}(\cO)}\leq
c\|u\|_{H^{\gamma-1}_{p,\theta}(\cO)}+c \|\psi
Du\|_{H^{\gamma-1}_{p,\theta}(\cO)}\leq c
\|u\|_{H^{\gamma}_{p,\theta}(\cO)},
$$
$$
\|u\|_{H^{\gamma}_{p,\theta}(\cO)}\leq
c\|u\|_{H^{\gamma-1}_{p,\theta}(\cO)}+c \|D\psi
u\|_{H^{\gamma-1}_{p,\theta}(\cO)}\leq c
\|u\|_{H^{\gamma}_{p,\theta}(\cO)}.
$$

\end{lemma}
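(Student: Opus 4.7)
The plan is to deduce each of (i)--(iv) from its half-space counterpart in Lemma \ref{lemma 1}, using the definition (\ref{10.10.03}) of $H^{\gamma}_{p,\theta}(\cO)$ as a weighted $\ell^p$-sum over dyadic dilations. The key mechanism is that on the support of $\zeta_{-n}(e^n\cdot)$ the rescaled function $e^n\psi(e^n\cdot)$ is bounded above and below and, by (\ref{03.04.01}), has derivatives of every order bounded uniformly in $n$. Hence $\psi$-weighted smooth multipliers on $\cO$ translate, piece by piece, into multipliers on $\bR^d_+$ whose Sobolev seminorms are controlled independently of $n$, exactly the situation in which the half-space results apply.

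I would start with (iii), the multiplier estimate, since it underlies the others. Writing
\[
\zeta_{-n}(e^n\cdot)\,(au)(e^n\cdot)=\bigl[a(e^n\cdot)\bigr]\cdot\bigl[\zeta_{-n}(e^n\cdot)u(e^n\cdot)\bigr]
\]
and applying (\ref{eqn 10.10.7}) on $\bR^d$, I reduce to bounding $\sup_y|D^\beta_y a(e^n y)|$ for $|\beta|\le|\gamma|_++1$ on the support of $\zeta_{-n}(e^n\cdot)$. On that support $e^n\psi(e^n y)\sim 1$, so the chain rule gives $|D^\beta_y a(e^n y)|\le N(e^n\psi(e^n y))^{|\beta|}|(D^\beta a)(e^n y)|\le N\,|a|^{(0)}_{|\gamma|_+}$. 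Multiplying by $e^{n\theta}$ and summing in $n$ yields (iii).

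Statement (ii) then follows by applying (iii) with $a=\psi^{-\alpha}$: by (\ref{03.04.01}) the function $\psi^{-\alpha}$ has finite $|\cdot|^{(0)}_{|\gamma|_+}$ seminorm, and the identity
\[
(\psi^{-\alpha}u)(e^n y)=e^{n\alpha}\,\bigl(e^n\psi(e^n y)\bigr)^{-\alpha}\,u(e^n y)
\]
converts the weight $e^{n\theta}$ in (\ref{10.10.03}) into $e^{n(\theta+\alpha p)}$ modulo the uniformly bounded multiplier $(e^n\psi(e^n\cdot))^{-\alpha}$, to which (iii) again applies. For (iv) I would apply the half-space identities in Lemma \ref{lemma 1}(v) to each dilated piece: under the dilation, $\psi D$ becomes the composition of the bounded multiplier $e^n\psi(e^n\cdot)$ with $D_y$, and symmetrically for $D\psi$; the two-sided bounds then follow from (iii) combined with Lemma \ref{lemma 1}(v). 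Finally, for (i), apply Lemma \ref{lemma 1}(ii) to each dilated piece to obtain $C$- and $C^\nu$-control of $M^{i+\theta/p}D^i$ and $M^{m+\nu+\theta/p}D^m$ of $\zeta_{-n}(e^n\cdot)u(e^n\cdot)$, translate back using $\psi\sim e^{-n}$ on the relevant supports, and take the supremum in $n$.

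The main obstacle, recurring in every item, is making the passage from each localized piece on $\cO$ (where $\psi$ is a general smooth boundary-distance-like function) to the standard setup on $\bR^d_+$ (where the weight is literally $x^1$) \emph{uniformly in the scale $n$}; the scale-invariant regularity (\ref{03.04.01}) is precisely what makes this reduction work, and together with the fact that only finitely many $\zeta_{-n}$'s overlap at any point, it ensures all constants are independent of $n$.
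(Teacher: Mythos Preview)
The paper does not prove this lemma; it simply cites \cite{Lo2} and \cite{KK2}. Your strategy---reducing each assertion, via the dyadic definition \eqref{10.10.03}, to multiplier and embedding facts on $H^{\gamma}_p(\bR^d)$---is exactly the one used in those references, so the outline is right. Two concrete slips, however, need fixing.

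First, there is a systematic sign error: since $\zeta_{-n}(x)=\zeta(e^{-n}\psi(x))$, on the support of $\zeta_{-n}(e^n\cdot)$ one has $e^{-n}\psi(e^n y)\sim 1$, not $e^{n}\psi(e^n y)\sim 1$. With the correct sign the multiplier argument for (iii) reads $|D^{\beta}_y a(e^n y)|=e^{n|\beta|}|(D^{\beta}a)(e^n y)|\sim \psi(e^n y)^{|\beta|}|(D^{\beta}a)(e^n y)|\le N|a|^{(0)}_{|\gamma|_+}$, and the identity for (ii) becomes $(\psi^{-\alpha}u)(e^n y)=e^{-n\alpha}\bigl(e^{-n}\psi(e^n y)\bigr)^{-\alpha}u(e^n y)$, so that the extracted scalar $e^{-n\alpha p}$ indeed turns $e^{n(\theta+\alpha p)}$ into $e^{n\theta}$.

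Second, your opening line for (ii)---``apply (iii) with $a=\psi^{-\alpha}$, which has finite $|\cdot|^{(0)}_{|\gamma|_+}$ seminorm''---is false when $\alpha>0$: already $[\psi^{-\alpha}]^{(0)}_0=\sup_{\cO}\psi^{-\alpha}=\infty$. The correct mechanism is the one you describe afterwards (extract $e^{-n\alpha}$ and treat $\bigl(e^{-n}\psi(e^n\cdot)\bigr)^{-\alpha}$ as the bounded multiplier, uniformly in $n$ by \eqref{03.04.01}); just drop the incorrect sentence. For (i), note that after dilation the pieces lie in $H^{\gamma}_p(\bR^d)$, so you should invoke the usual Sobolev embedding $H^{\gamma}_p(\bR^d)\hookrightarrow C^{m,\nu}$ rather than Lemma~\ref{lemma 1}(ii) (which is stated for the weighted half-space); the translation back via $\psi(e^n y)\sim e^{n}$ then produces the $\psi$-weights in the conclusion.
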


Denote
$$
\bH^{\gamma}_{p,\theta}(\cO,T)=L_p(
[0,T],H^{\gamma}_{p,\theta}(\cO)), \quad \bL_{p,\theta}(\cO,T)=\bH^{0}_{p,\theta}(\cO,T)
$$
$$
U^{\gamma}_{p,\theta}(\cO)=
\psi^{1-2/p}H^{\gamma-2/p}_{p,\theta}(\cO)).
$$

\begin{definition}
We write   $u\in \frH^{\gamma+2}_{p,\theta}(\cO,T)$  if
 $u\in \psi\bH^{\gamma+2}_{p,\theta}(\cO,T)$,
$u(0,\cdot) \in U^{\gamma+2}_{p,\theta}(\cO)$ and  for some $f \in
\psi^{-1}\bH^{\gamma}_{p,\theta}(\cO,T)$,  it holds that  $u_t=f$
in the sense of distributions, that is for any $\phi\in C^{\infty}_0(\cO)$, the equality
$$
(u(t),\phi)=(u(0),\phi)+\int^t_0 (f(s),\phi)ds
$$
holds for all $t\leq T$.
The norm in  $
\frH^{\gamma+2}_{p,\theta}(\cO,T)$ is introduced by
$$
\|u\|_{\frH^{\gamma+2}_{p,\theta}(\cO,T)}=
\|\psi^{-1}u\|_{\bH^{\gamma+2}_{p,\theta}(\cO,T)} + \|\psi
u_t\|_{\bH^{\gamma}_{p,\theta}(\cO,T)}  +
\|u(0,\cdot)\|_{U^{\gamma+2}_{p,\theta}(\cO)}.
$$
Denote  $\frH^{\gamma+2}_{p,\theta,0}(\cO,T)= \frH^{\gamma+2}_{p,\theta}(\cO,T) \cap \{u: u(0)=0\}$.
\end{definition}

\begin{lemma}
                             \label{lemma 15.05}
 There exists a constant
$N=N(d,p,\theta,\gamma,T)$ such that for any $u\in \frH^{\gamma+2}_{p,\theta,0}(T)$,
$$
 \sup_{t\leq T}\|u(t)\|_{H^{\gamma+1}_{p,\theta}(\cO)}\leq N
\|u\|_{\frH^{\gamma+2}_{p,\theta}(\cO,T)}.
$$
In particular, for any $t\leq T$,
$$
\|u\|^p_{\bH^{\gamma+1}_{p,\theta}(\cO,t)}\leq  N \int^t_0
\|u\|^p_{\frH^{\gamma+2}_{p,\theta}(\cO,s)}ds.
$$
\end{lemma}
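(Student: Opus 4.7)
The plan is to reduce this domain estimate to the half-space Sobolev embedding in time (Theorem \ref{banach}(ii)) via the dyadic partition $\{\zeta_n\}$ that defines $H^{\gamma}_{p,\theta}(\cO)$ through (\ref{10.10.03}).

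First, the ``in particular'' inequality follows immediately from the $\sup$-estimate: since $u(0)=0$, applying the pointwise bound on each subinterval $[0,s]$ and integrating in $s$ gives
$$\|u\|^p_{\bH^{\gamma+1}_{p,\theta}(\cO,t)}=\int_0^t\|u(s)\|^p_{H^{\gamma+1}_{p,\theta}(\cO)}\,ds\le\int_0^t\sup_{r\le s}\|u(r)\|^p_{H^{\gamma+1}_{p,\theta}(\cO)}\,ds\le N\int_0^t\|u\|^p_{\frH^{\gamma+2}_{p,\theta}(\cO,s)}\,ds.$$
So it suffices to prove $\sup_{t\le T}\|u(t)\|^p_{H^{\gamma+1}_{p,\theta}(\cO)}\le N\|u\|^p_{\frH^{\gamma+2}_{p,\theta}(\cO,T)}$.

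By definition (\ref{10.10.03}), for any fixed $t$,
$$\|u(t)\|^p_{H^{\gamma+1}_{p,\theta}(\cO)}=\sum_{n\in\bZ}e^{n\theta}\|u_n(t,\cdot)\|^p_{H^{\gamma+1}_p},\qquad u_n(t,y):=\zeta_{-n}(e^n y)\,u(t,e^n y).$$
For each $n$ the function $u_n$ is compactly supported in $\bR^d_+$ at unit scale (uniformly in $n$), and $u_n(0,\cdot)=0$. Using the analogous decomposition for $\|\psi^{-1}u\|_{\bH^{\gamma+2}_{p,\theta}(\cO,T)}$ and $\|\psi u_t\|_{\bH^{\gamma}_{p,\theta}(\cO,T)}$, together with $\psi(e^n y)\sim e^{-n}$ on the support of $\zeta_{-n}(e^n\cdot)$ (Lemma \ref{lemma 10.3.1}) and the comparable expression (\ref{eqn 10.10.1}), one verifies that each $u_n\in\frH^{\gamma+2}_{p,\theta,0}(T)$ in the half-space sense, with the summed bound
$$\sum_{n}e^{n\theta}\|u_n\|^p_{\frH^{\gamma+2}_{p,\theta}(T)}\le N\|u\|^p_{\frH^{\gamma+2}_{p,\theta}(\cO,T)}.$$
Applying Theorem \ref{banach}(ii) to each $u_n$ individually and summing the resulting estimates $\sup_{t\le T}\|u_n(t)\|^p_{H^{\gamma+1}_{p,\theta}}\le N\|u_n\|^p_{\frH^{\gamma+2}_{p,\theta}(T)}$, weighted by $e^{n\theta}$, yields the desired inequality (after observing that on the support of $u_n$ one has $x^1\sim 1$, so $\|u_n\|_{H^{\gamma+1}_{p,\theta}}$ and $\|u_n\|_{H^{\gamma+1}_p}$ are comparable up to a constant independent of $n$).

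The main technical point will be the bookkeeping in the passage from $\cO$ to $\bR^d_+$: derivatives falling on the cutoff $\zeta_{-n}(e^n\cdot)$ produce factors $e^n$, which have to be absorbed by the factors $(x^1)^{|\alpha|}\sim e^{-n|\alpha|}$ built into the weighted-norm expressions via (\ref{eqn 10.10.1}) and the multiplier estimate (\ref{eqn 10.10.9}) of Lemma \ref{lemma 1}(iii). This is routine but must be tracked carefully so that the constants do not depend on $n$; once that is in hand, the lemma follows by transplanting the half-space embedding result along the dyadic localization.
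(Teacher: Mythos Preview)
Your reduction has a genuine gap: the dyadic pieces $u_n(t,y)=\zeta_{-n}(e^n y)\,u(t,e^n y)$ coming from the definition (\ref{10.10.03}) are \emph{not} functions on $\bR^d_+$, so Theorem \ref{banach}(ii) does not apply to them. Recall that here $\cO$ is an arbitrary bounded $C^1$ domain and $\zeta_{-n}(x)=\zeta(e^{-n}\psi(x))$ depends on the boundary-distance function $\psi$, not on the coordinate $x^1$. Consequently the support of $u_n$ lies in the rescaled domain $e^{-n}\cO$, which has nothing to do with the half space; your parenthetical assertion ``on the support of $u_n$ one has $x^1\sim 1$'' is simply false in this setting. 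You appear to be importing the half-space picture (where the cutoff is $\zeta(x^1)$ and the pieces are supported in a fixed slab $\{x^1\sim 1\}$) into the domain picture, where no such structure is available.

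There are two honest ways to fix this. One is to use a finite partition of unity subordinate to boundary charts $\Psi$ (as in the proof of Theorem \ref{thm main 2}) to transplant $u$ to the half space via (\ref{1.28.01}), apply Theorem \ref{banach}(ii) there, and handle the interior piece with the standard unweighted embedding; this is essentially the route taken in the reference \cite{Lo3} that the paper cites. The other is to keep your dyadic pieces $u_n$ but recognize that they live in the unweighted spaces $H^{\gamma}_p(\bR^d)$, and invoke instead the classical trace/interpolation embedding $L_p((0,T),H^{\gamma+2}_p)\cap W^1_p((0,T),H^{\gamma}_p)\hookrightarrow C([0,T],H^{\gamma+1}_p)$; the required bookkeeping (powers of $e^n$ from $\psi^{\pm 1}$ cancelling against the scaling) is exactly the computation you outlined, but the target lemma is the unweighted full-space embedding, not Theorem \ref{banach}(ii). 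Either route works; the one you wrote down, as it stands, does not.
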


\begin{proof}
See inequality (2.21) of \cite{Lo3}. Actually there is a restriction $p\geq 2$ in (2.21) of \cite{Lo3}, but by inspecting the proofs of Theorems 4.2 and Theorem 7.1 in \cite{Kr99} one can easily check that in our (deterministic) case the the result holds for all $p>1$.
\end{proof}

Denote $B_r(x):=\{y\in \bR^d: |x-y|<r\}$ and
$Q_r(t,x):=(t,t+r^2)\times B_r(x)$. As before, we define  weighted
mean oscillation on $Q_r(x)$ with respect to measure
$\nu(dx)=\rho^{\theta-d+p}dx$
\begin{eqnarray*}
osc^{\theta}_x(a,Q_r(t,x))&=&r^{-2}\int^{t+r^2}_t \left(\aint_{B_r(x)}\aint_{B_r(x)} |a(s,y)-a(s,z)|\nu(dy)\nu(dz)\right)ds\\
&=&\frac{1}{r^2(\nu(B_r(x)))^2}\int^{t+r^2}_t \left(\int_{B_r(x)}\int_{B_r(x)} |a(s,y)-a(s,z)|\nu(dy)\nu(dz)\right)ds.
\end{eqnarray*}
Denote $\cO_{R}:=\{x\in \cO: \rho(x)>R\}$ and $\cO^c_R:=\cO \setminus \cO_R$. For $\kappa \in (0,1]$ and $R>0$, let $\mathbf{Q}(R,\kappa)$ be the collection of all $Q_r(t,x)$ so that  $r\leq \kappa \rho(x)$ and $Q_r(t,x)\subset \bR\times \cO^c_{R}$.
Define
$$
a^{\#(\theta)}_{R,\kappa}=a^{\#(\theta)}_{R,\kappa,\cO}=\sup_{\cQ\in \mathbf{Q}(R,\kappa)} \, osc^{\theta}_x(a,\cQ), \quad \quad a^{\#(\theta)}_{\kappa}=a^{\#(\theta)}_{\kappa,\cO}=\sup_{R>0}a^{\#(\theta)}_{R,\kappa}.
$$

Recall that
$$
osc_x(a,Q_r(t,x))=\frac{1}{r^2|B_r(x)|^2} \int^{t+r^2}_t \left(\int_{B_r(x)}\int_{B_r(x)} |a(s,y)-a(s,z)|dydz\right)ds.
$$
 For a subset $\cU\subset \cO$  we say that  $a=(a^{ij})$  is VMO in $\cU$ if
$$
\lim_{r\to 0}\sup_{Q_r(t,x)\cap \cU \neq \emptyset} osc_x(a,Q_r(t,x))=0.
$$

Throughout this section we assume the following.

\begin{assumption}
                     \label{main assumption2}
 There exist  constants
$\delta,K>0$ so that
\begin{equation}
\delta|\xi|^2\leq a^{ij}(t,x) \xi^i\xi^j \leq K|\xi|^2, \quad \forall \xi \in \bR^d.\nonumber
\end{equation}

\end{assumption}

 Here is the main result of this article.

\begin{theorem}
                    \label{thm main 2}
 Assume
\begin{eqnarray}
&& a=(a^{ij}) \,\text{is  VMO in}\,\, \cO_{\varepsilon} \quad \text{for any}\, \varepsilon>0  \label{con 1}\\
&&
\lim_{\rho(x) \to 0} \sup_t \left(\rho(x)|b^i(t,x)|+\rho^2(x)|c(t,x)|\right)=0 \label{con 2}
\end{eqnarray}
Then there exist constants $\varepsilon_1, \kappa_1\in (0,1)$ so that if
\begin{equation}
                        \label{eqn hard}
\lim_{R\to 0}a^{\#(\theta)}_{R, \kappa_1}<\varepsilon_1,
\end{equation}
then
 for any
  $f\in \psi^{-1}\bL_{p,\theta}(\cO,T)$ and $u_0\in
U^{2}_{p,\theta}(\cO)$  the equation
\begin{equation}
                      \label{eqn 12.09.1}
u_t=a^{ij}u_{x^ix^j}+b^iu_{x^i}+cu+f, \quad u(0)=u_0
\end{equation}
 admits a
unique solution $u\in \frH^{2}_{p,\theta}(\cO,T)$, and for this solution we have
\begin{equation}
                        \label{a priori}
\|u\|_{\frH^{2}_{p,\theta}(\cO,T)}\leq
N\left(\|\psi f\|_{\bL_{p,\theta}(\cO,T)}+\|u_0\|_{U^{2}_{p,\theta}(\cO)}\right),
\end{equation}
where $N=N(p,\theta,\delta_0,K,T)$.
\end{theorem}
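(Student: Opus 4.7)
The plan is to reduce Theorem \ref{thm main 2} to the half-space result Theorem \ref{main theorem} via a localization-and-flattening argument, handling interior pieces by classical $L_p$-theory with VMO coefficients and boundary pieces by Theorem \ref{main theorem}(ii), then gluing everything with a partition of unity and closing the estimate by the method of continuity.

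First I would fix a small $R_0>0$ and cover $\cO$ by finitely many balls: interior balls contained in $\cO_{R_0/2}=\{\rho>R_0/2\}$, and boundary balls $B_{r_0}(x_k)$ with $x_k\in\partial\cO$ on which the $C^1_u$-chart $\Psi_k$ of Assumption \ref{assumption domain} flattens the boundary. Take a subordinated smooth partition of unity $\{\zeta_k\}$. For each $k$, $u_k:=u\zeta_k$ satisfies
\[
(u_k)_t = a^{ij}(u_k)_{x^ix^j} + b^i(u_k)_{x^i} + c u_k + f_k,
\qquad f_k = f\zeta_k - 2a^{ij}u_{x^i}\zeta_{k,x^j} - a^{ij}u\zeta_{k,x^ix^j} - b^i u\zeta_{k,x^i}.
\]
For interior pieces, $\rho$ is bounded below on the support, so the weighted norms reduce to unweighted norms on $\bR^d$ and the classical $L_p$-theory for parabolic equations with VMO leading coefficients (guaranteed by \eqref{con 1}) applies to estimate $u_k$ in $W^{1,2}_p$. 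For boundary pieces I would pull $u_k$ back via $y=\Psi_k(x)$, setting $v(t,y)=u_k(t,\Psi_k^{-1}(y))$, to obtain an equation on $\bR^d_+$ with new coefficients $\tilde a^{ij}(t,y)=a^{mn}\Psi^i_{x^m}\Psi^j_{x^n}$ and analogously transformed $\tilde b^i,\tilde c$, supported in a set where $y^1\le R$ for some $R=R(R_0)$.

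Next I would verify the hypotheses of Theorem \ref{main theorem}(ii) for $v$. Since $\rho(x)\sim\psi(x)\sim y^1$ on the chart (Assumption \ref{assumption domain}(iii) and Lemma \ref{lemma 10.3.1}), the condition \eqref{con 2} on $(b^i,c)$ translates into a pointwise bound \eqref{eqn 3.18.1} for $(\tilde b^i,\tilde c)$ with a constant that can be made as small as desired by shrinking $R_0$. For the leading coefficients I would exploit the uniform continuity of $\Psi_x$ (Assumption \ref{assumption domain}(iv)): on each ball $\cQ\in\bQ(R,\kappa_0)$, $\Psi_x$ oscillates by at most a modulus $\omega(\mathrm{diam}\,\cQ)$, so the weighted BMO oscillation of $\tilde a$ is controlled by that of $a$ plus $\omega(\mathrm{diam}\,\cQ)\,|a|_\infty$. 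Combining the small BMO hypothesis \eqref{eqn hard} near $\partial\cO$ with the VMO hypothesis \eqref{con 1} on $\cO_\varepsilon$ (which handles the charts whose diameter, after flattening, lies in a regime away from the boundary), one concludes $\tilde a^{\#(\theta)}_{R_{\kappa_0},\kappa_0,\bR^d_+}<\varepsilon_0$, the threshold required by Theorem \ref{main theorem}(ii).

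Having checked the hypotheses, Theorem \ref{main theorem}(ii) produces an a priori bound
\[
\|v\|_{\frH^2_{p,\theta}(\bR^d_+,T)} \leq N\bigl(\|My^1\tilde f_k\|_{\bL_{p,\theta}(T)} + \|v(0,\cdot)\|_{U^2_{p,\theta}}\bigr).
\]
Pulling back and summing over $k$, using the equivalence of weighted norms under the $C^1$ change of variable, I obtain
\[
\|u\|_{\frH^2_{p,\theta}(\cO,T)} \leq N\bigl(\|\psi f\|_{\bL_{p,\theta}(\cO,T)} + \|u_0\|_{U^2_{p,\theta}(\cO)} + \|u\|_{\bH^1_{p,\theta}(\cO,T)}\bigr),
\]
the last term arising from the commutators $f_k-f\zeta_k$. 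The intermediate norm $\|u\|_{\bH^1_{p,\theta}(\cO,T)}$ is absorbed by interpolation (Lemma \ref{lemma collection domain}(iv)) into $\|\psi^{-1}u\|_{\bH^2_{p,\theta}}$ and $\|u\|_{\bL_{p,\theta}}$, and the remaining $\|u\|_{\bL_{p,\theta}(\cO,t)}$ is handled via Lemma \ref{lemma 15.05} and Gronwall in $t$, yielding \eqref{a priori}. Existence follows from this a priori estimate by the method of continuity, connecting the operator to $\partial_t-\Delta$ for which solvability in $\frH^2_{p,\theta}(\cO,T)$ is already established in \cite{KL2}.

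The main obstacle will be Step 3: transferring the weighted small-BMO condition \eqref{eqn hard} across a merely $C^1$ diffeomorphism. Because $\Psi_x$ is only uniformly continuous, the transformed coefficient $\tilde a^{ij}=a^{mn}\Psi^i_{x^m}\Psi^j_{x^n}$ is a product of a BMO function with a uniformly continuous one, and one has to argue carefully that the weighted oscillations of $\tilde a$ on admissible parabolic cylinders in $\bQ(R,\kappa_0)$ remain below $\varepsilon_0$ uniformly. Splitting $\Psi_x$ into its mean over the cylinder plus a uniformly small remainder, and using the fact that the weighted measure $\nu$ and Lebesgue measure are comparable on each such cylinder (since $r\le\kappa_0 y^1$), is the mechanism I expect to use; making the resulting bookkeeping uniform in the cylinder, and compatible with the threshold $\varepsilon_0$ fixed by Theorem \ref{main theorem}, is the delicate point.
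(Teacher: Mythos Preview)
Your proposal is correct and follows essentially the same route as the paper: method of continuity reducing to the heat equation (solvable by \cite{KK2}), partition of unity into an interior piece handled by Krylov's VMO theory \cite{Kr07} and boundary pieces flattened via $\Psi$ and fed into Theorem \ref{main theorem}(ii), with the commutator terms producing a lower-order $\bH^1_{p,\theta}$ norm absorbed by Lemma \ref{lemma 15.05} and Gronwall. One detail you should not overlook when filling this in: the transformed drift $\tilde b^i$ picks up a term $a^{lm}\Psi^i_{x^lx^m}$ from the chain rule, and since $\cO$ is only $C^1$ you must invoke the special choice of $\Psi$ from \cite{KK2} satisfying $\rho(x)\Psi_{xx}(x)\to 0$ as $\rho(x)\to 0$ (see \eqref{2.25.02}) to force $|x^1\tilde b^i|$ below $\beta_0$; the paper does exactly this, and also disposes of your ``main obstacle'' in one line, so the BMO-transfer bookkeeping is less delicate than you anticipate.
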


\begin{remark}
                \label{main remark 2}
$(i)$ By inspecting our proof,  one  easily checks that (\ref{eqn hard}) and   (\ref{con 1}) can be  replaced by
$$
a^{\#(\theta)}_{R, \kappa_1}<\varepsilon_1 \quad \text{for some}\,\, R>0, \quad \quad \text{and} \quad a=(a^{ij}) \,\, \text{is  VMO in}\, \cO_R.
$$

\noindent
$(ii)$ Obviously,  (\ref{eqn hard}) and   (\ref{con 1}) are certainly satisfied if $a$ is VMO in $\cO$
 (see  Remark \ref{main remark}).

\noindent
$(iii)$ Our proof shows that  (\ref{con 2}) can be replaced by
$$
\lim_{\rho(x) \to 0} \sup_t \left(\rho(x)|b^i(t,x)|+\rho^2(x)|c(t,x)|\right)<\beta
$$
for some $\beta>0$.
\end{remark}

{\bf{Proof of Theorem \ref{thm main 2}}}

See Theorem  2.10 of \cite{KK2} for the case  $a^{ij}=\delta^{ij}, b^i=c=0$.  Hence, due to the method of continuity, we only need to show that  (\ref{a priori}) holds given that a solution $u\in \frH^{2}_{p,\theta}(T)$ already exists.  Let $u\in \frH^{2}_{p,\theta}(\cO,T)$ be a solution of equation  (\ref{eqn 12.09.1}). By Theorem 2.10 of \cite{KK2}, the equation
$$
v_t=\Delta v, \quad v(0)=u_0
$$
has a unique solution $v\in \frH^2_{p,\theta}(\cO,T)$, and furthermore
$$
\|v\|_{\frH^2_{p,\theta}(\cO,T)}\leq N\|u_0\|_{U^2_{p,\theta}(\cO)}.
$$
Thus considering $u-v$, we assume   $u_0=0$.

 Let $x_0 \in \partial \cO$ and $\Psi$ be a function from
Assumption \ref{assumption domain}.
In \cite{KK2} it is shown that
 $\Psi$ can be chosen in such a way that  for any non-negative integer $n$
\begin{equation}
                                                          \label{2.25.03}
|\Psi_{x}|^{(0)}_{n,B_{r_0}(x_0)\cap \cO} +
 |\Psi^{-1}_{x}|^{(0)}_{n,J_{+}} < N(n)<  \infty
\end{equation}
and
\begin{equation}
                                                     \label{2.25.02}
\rho(x)\Psi_{xx}(x) \to 0 \quad \text{as}\quad  x\in
B_{r_0}(x_0)\cap \cO,
 \text{and} \,\,\,  \rho(x) \to 0,
\end{equation}
where the constants $N(n)$ and the
 convergence in (\ref{2.25.02}) are independent of  $x_0$.
Define $r=r_{0}/K_{0}$
and fix smooth functions $\eta \in C^{\infty}_{0}(B_{r/2}(0) )$ such that $ 0 \leq \eta \leq 1$, and
 $\eta=1$ in $B_{r/4}(0)$. Observe that
$\Psi(B_{r_0}(x_0))$ contains $B_r $ and $a^{ij}(\Psi^{-1}(x))$ is well defined for any $x\in B_r(0)$.
 For  $t>0$, $x\in\bR^{d}_{+}$ let us introduce
$$
\hat{a}^{ij}(t,x):= \eta(x)\left(\sum_{l,m=1}^d
  a^{lm}(t,\Psi^{-1}(x))\partial_l\Psi^{i}(\Psi^{-1}(x))\partial_m\Psi^{j}(\Psi^{-1}(x))\right)  +
\delta^{ij}(1- \eta(x)),
$$
\begin{eqnarray*}
\hat{b}^{i}(t,x) &:=&\eta(x) \Big[
\sum_{l,m}a^{lm}(t,\Psi^{-1}(x))\cdot
\partial_{lm}\Psi^{i}(\Psi^{-1}(x))+\sum_{l}b^{l}(t,\Psi^{-1}(x))\cdot\partial_l\Psi^{i}(\Psi^{-1}(x))\Big],
\end{eqnarray*}
$$
 \hat{c}(t,x) :=\eta(x) c(t,\Psi^{-1}(x)).
 $$
Then  by (\ref{con 2}) and  (\ref{2.25.02}) one can easily find
$r_1>0$ satisfying
 $$
 \sup_{x^1\leq K_0 r_1}\left(|x^1\hat{b}^i|+(x^1)^2\hat{c}|\right)\leq \beta_0/2.
 $$
 Denote $$\bar{b}^i=\hat{b}^iI_{x^1\leq K_0r_1}, \quad \bar{c}=\hat{c}I_{x^1\leq K_0r_1}.
 $$

 It is not hard to check that there exists $\kappa_1\in (0,1)$ so that (if $R$ is sufficiently small)
 $$
 \hat{a}^{\#(\theta)}_{R,\kappa_0, \bR^d_+}\leq N a^{\#(\theta)}_{K_0R,\kappa_1, \cO}+ N \eta^{\#(\theta)}_{R,\kappa_0, \bR^d_+}+ c(R),
 $$
 where $N=N(K_0,\eta)$ and $c(R)\downarrow 0$ as $R\to 0$. Take $\varepsilon_1$ so that $N\varepsilon_1\leq \varepsilon_0/2$. The if
 $\lim_{R\to 0} a^{\#(\theta)}_{R,\kappa_1, \cO}\leq \varepsilon_1$
 then for any sufficiently small $R$, we have $\hat{a}^{\#(\theta)}_{R_0,\kappa_0, \bR^d_+}\leq \varepsilon_0$, where $R_0=2R(1+\kappa_0)/(1-\kappa_0)$.

 Denote $\bar{r}=r/{(4K_0)}\wedge r_1 \wedge R_0/{K_0}$. Let $\zeta$ be a smooth function
with support in $B_{\bar{r}}(x_0)$ and denote
$v:=(u\zeta)(\Psi^{-1})$ and continue $v$ as zero in
$\bR^{d}_{+}\setminus\Psi(B_{\bar{r}}(x_0))$. Since
$\eta=1$ on $\Psi(B_{\bar{r}}(x_0))$, the function  $v$
satisfies
$$
 v_t = \hat{a}^{ij}v_{x^i x^j} + \bar{b}^{i}v_{x^i} +
\bar{c}v + \hat{f}
$$
where
$$\hat{f} =\tilde{f}^k(\Psi^{-1}), \quad \tilde{f}^k=
-2a^{ij}u_{x^{i}}\zeta_{x^{j}}
-a^{ij}u\zeta_{x^{i}x^{j}}-b^{i}u^r\zeta_{x^{i}} +\zeta f.
$$

Next we observe  that by Lemma \ref{lemma 10.3.1} and
  Theorem 3.2 in \cite{Lo2} (or see \cite{KK2})
for any $\nu,\alpha \in \bR $ and $h \in
\psi^{-\alpha}H^{\nu}_{p,\theta}(\cO)$ with support in
$B_{\bar{r}}(x_0)$
\begin{equation}
                                                          \label{1.28.01}
\|\psi^{\alpha}h\|_{H^{\nu}_{p,\theta}(\cO)} \sim
\|M^{\alpha}h(\Psi^{-1})\|_{H^{\nu}_{p,\theta}}.
\end{equation}
Therefore we  conclude that
 $v\in \frH^{2}_{p,\theta}(T)$, and  by Theorem \ref{main theorem}(ii)  we have, for any $t\leq T$,
$$
\|M^{-1}v\|_{\bH^{2}_{p,\theta}(t)} \leq N \|M\hat{f}
\|_{\bL_{p,\theta}(t)}.
$$
By using (\ref{1.28.01}) again we obtain
 \begin{eqnarray*}
\|\psi^{-1}u\zeta\|_{\bH^2_{p,\theta}(\cO,t)} &\leq& N
\|a\zeta_x \psi u_x\|_{\bL_{p,\theta}(\cO,t)} + N
\|a\zeta_{xx}\psi u\|_{\bL_{p,\theta}(\cO,t)}\\
&+& N \|\zeta_x \psi b u\|_{\bL_{p,\theta}(\cO,t)}  +  N \|\zeta \psi
f\|_{\bL_{p,\theta}(\cO,t)}.
\end{eqnarray*}

Next, we easily check that
$$
\sup_{t,x} \left(|\zeta_x a|+ |\zeta_{xx}\psi
a|+ |\zeta_x \psi
b|\right)<\infty
$$
 and    conclude
$$
\|\psi^{-1}u\zeta\|_{\bH^{2}_{p,\theta}(\cO,t)} \leq N \|\psi
u_x\|_{\bL_{p,\theta}(\cO,t)} + N
\|u\|_{\bL_{p,\theta}(\cO,t)}
+N \|\psi f\|_{\bL_{p,\theta}(\cO,t)}.
$$

Finally, to estimate the norm
 $\|\psi^{-1} u\|_{\bH^{2}_{p,\theta}(\cO,t)}$,
 we introduce a partition of unity $\zeta_{(i)}, i=0,1,2,...,M$ such
that $\zeta_{(0)} \in C^{\infty}_0(\cO)$ and
  $\zeta_{(i)} \in C^{\infty}_0(B_{\bar{r}}(x_i))$,
$ x_i \in \partial \cO$ for $i\geq1$.
   Observe that since
$u\zeta_{(0)}$ has compact support in $\cO$, we get
$$
\|\psi^{-1}u\zeta_{(0)}\|_{\bH^{2}_{p,\theta}(\cO,t)}\sim
\|u\zeta_{(0)}\|_{\bH^{2}_{p}(t)}.
$$
Thus we can estimate
$\|\psi^{-1} u\zeta_{(0)}\|_{\bH^{2}_{p,\theta}(\cO,t)}$ using
Theorem 2.1 in \cite{Kr07} and the other norms as above.
By summing
up those estimates we get
$$
\|\psi^{-1} u\|_{\bH^{2}_{p,\theta}(\cO,t)} \leq N
 \|\psi u_x\|_{\bL_{p,\theta}(\cO,t)}+
N\|u\|_{\bL_{p,\theta}(\cO,t)}
+ N \|\psi f\|_{\bL_{p,\theta}(\cO,t)}.
$$
Furthermore, we know   (see Lemma \ref{lemma collection domain}) that
$$
\|\psi u_x\|_{ H^{\gamma}_{p,\theta}(\cO)} \leq N \|u\|_{
H^{\gamma+1}_{p,\theta}(\cO)}.
$$
Therefore it follows
\begin{eqnarray*}
\|u\|^p_{\frH^{2}_{p,\theta}(\cO,t)} &\leq& N
\|u\|^p_{\bH^{1}_{p,\theta}(\cO,t)} + N \|\psi
f\|^p_{\bL_{p,\theta}(\cO,t)}\\
&\leq & N \int^t_0 \|u\|^p_{\frH^{2}_{p,\theta}(\cO,s)}\,ds+N \|\psi
f\|^p_{\bL_{p,\theta}(\cO,t)},
\end{eqnarray*}
where Lemma \ref{lemma 15.05} is used for the second inequality. Now
(\ref{a priori})  follows from Gronwall's inequality.  The theorem
is proved. \hspace{11cm}$\Box$

\end{document}